\documentclass[psamsfonts,12pt]{amsart}
\usepackage{amsmath}
\usepackage{amssymb}
\usepackage{enumerate}
\usepackage{graphicx, epsfig, psfrag}

\let\cal\mathcal
\def \mR{\mathbb R}
\def \Z{\mathbb Z}

\def \T{\mathbb T}
\def \C {\mathbb C}
\def \Q {\mathbb Q}

\def \Zk{\mathbb Z^k}
\def \Zm{\mathbb Z^m}
\def\Zmm{\mathbb Z^{m-1}}

\def \Z2{\mathbb Z^2}
\def \Z3{\mathbb Z^3}

\def \Rk {\mathbb R^k}
\def \Rm {\mathbb R^m}

\def \R2{\mathbb R^2}
\def \R3{\mathbb R^3}
\def \Rsc{\mathbb R^{s(\cC)}}				
\def \Rs{\mathbb R^s}					
\def \Ru{\mathbb R^u}					

\def\Tmp{\mathbb T_\pm^m}
\def \Tk{\mathbb T^k}
\def \Tm{\mathbb T^m}
\def \Tk{\mathbb T^k}
\def \T2{\mathbb T^2}
\def \T3{\mathbb T^3}

\def \C2{\mathbb C^2}
\def \C3{\mathbb C^3}

\def \S1{\mathbb S^1}
\def \S2{\mathbb S^2}
\def \S3{\mathbb S^3}

\def\a{\alpha}
\def\ao{\alpha_0}
\def\b{\beta}

\def\l{\lambda}

\def\n{\nu}

\def\S{\Sigma}

\def \an{{\alpha (\mathbf n)}}

\def \w{\cal W}
\def \cC{{\mathcal C}}

\def\0C{\mathcal C^0}
\def\1C{\mathcal C^1}
\def\2C{\mathcal C^2}

\newcommand{\W}{\mathcal{W}}

\def\E2P {\exp 2\pi i}

\newcommand{\Ker}{\operatorname{Ker}}

\newcommand{\Id}{\operatorname{Id}}

\newcommand{\bm}{{\mathbf m}}

\newcommand{\bn}{{\mathbf n}}

\newcommand{\diag}{\mathop{\rm diag}}

\newcounter{Theorem}

\newtheorem{thm}{Theorem}
\newtheorem{coro}{Corollary}
\newcounter{Lemma}

\newtheorem{lem}{Lemma}[section]
\newtheorem{cor}[lem]{Corollary}
\newtheorem{prop}[lem]{Proposition}

\theoremstyle{plain}
\newtheorem*{THM}{Main Theorem}

\theoremstyle{definition}

\newtheorem{rem}{Remark}

\numberwithin{equation}{section}

\newcommand{\foot}[1]{\mbox{}\marginpar{\raggedleft\hspace{0pt}\tiny #1}}

\begin{document}
\title[]{Arithmeticity  and topology of smooth  actions of   higher rank abelian groups }
{}
\author[]{ Anatole Katok $^1)$ and Federico Rodriguez Hertz $^2)$}


\subjclass[2003]{}

\address{Department of Mathematics\\
        The Pennsylvania State University\\
        University Park, PA 16802 \\
        USA}
\email{katok$\_$a@math.psu.edu}
\email{hertz@math.psu.edu}
\urladdr{http://www.math.psu.edu/katok\_a }
 
\thanks{ $^1)$ Based on research  supported by NSF grant DMS 1002554.}

\thanks{ $^2)$ Based on research  supported by NSF grant DMS 1201326}

\begin{abstract} We prove that any smooth  action of $\Zmm, \,m\ge 3$ on an $m$-dimensional manifold that preserves a measure such that all non-identity elements of the suspension have positive entropy  is essentially algebraic, i.e. isomorphic up to  a finite permutation to an affine action on the torus or its factor by $\pm\Id$. Furthermore this isomorphism  has nice geometric properties, in particular, it is smooth  in the sense of Whitney on a set whose complement has arbitrary small measure.  We further derive restrictions on topology of manifolds that may admit such actions, for example, excluding spheres and obtaining below estimate on the first Betti number in the odd-dimensional case. 
\end{abstract}
\maketitle

\section*{Introduction}
Let $\alpha$ be a smooth action of of $\Zmm,\, m\ge 3$ on a compact  $m$-dimensional manifold  $M$, not necessarily compact.  We assume that $\alpha$ is uniformly $C^{1+\theta},\, \theta>0$ with respect to a certain  smooth  Riemanninan metric on $M$,  i.e. the generators of the action and their inverses have uniformly bounded derivatives satisfying  Hoelder condition with exponent $\theta$ and a fixed Hoelder constant.  Naturally, if $M$ is compact this condition does not depend on the choice of  the Riemannian metric. This regularity assumption allows to apply standard results of smooth ergodic theory to any invariant measure of the action. 

Following  \cite{KKRH}  we assume that  $\alpha$ has an invariant probability ergodic measure $\mu$   such that 
\begin{enumerate} 
\item\label{(1)} Lyapunov characteristic exponents are non-zero and are  in  general position, i.e.   the dimension of the intersection of any $l$ of  their kernels  is the minimal possible, i.e. is equal, 
to $\max\{ k-l, 0\}$, 
\item\label{(2)} at least one element
in $\Zmm$ has positive entropy with respect to $\mu$.
\end{enumerate} 

We will call such a pair $(\a, \mu)$ a {\em maximal rank positive entropy action}. 

The main result of \cite{KKRH} is absolute continuity of the  measure $\mu$ for a maximal rank positive entropy action. 
In \cite[Sections 8.1, 8.2]{KKRH}  a program  of further study  of such actions has been formulated. 

In the present paper  we mostly complete this program. 
Firstly we extend   the  description of  maximal rank actions on the torus with Cartan homotopy data  from \cite{KRH1},
where  a positive  entropy  hyperbolic measure always exists, to maximal rank positive entropy actions on arbitrary manifolds.  Secondly we obtain  substantial information on  topology of manifolds that  may admit such actions., in particularly excluding spheres and many other  standard manifolds. 

Let us call  an {\em infratorus} a factor of $\Rm$  by a group $E$  of affine transformations 
that contains a lattice $L$  of translations  as a finite index subgroup. Thus an infratorus is  the factor of the torus  $\Rm/L$ by a finite group $G$  of  affine transformations. In this definition  infratorus is a varifold and not necessarily a smooth manifold since the group $G$  may not act freely; in particular it may have fixed points. In fact, the only examples of   infratori that  admit maximal  rank abelian actions by affine transformations  and that are not tori,   are of that  kind:  such an infratorus is obtained by factorizing  $\Tm$  by the involution $Ix=-x$ that has $2^m$ fixed points. Let us denote  such an infratorus $\Tmp$.

\begin{rem}By blowing up the singular points and glueing 
in copies of the projective space of codimension one, one  constructs a smooth action on a manifold  that is  diffeomorphic to the affine action on the infratorus outside of the singular points, see \cite{KL}. This can be considered as the ``standard smooth model'' of the infratorus action.    Examples of infratori that are smooth manifolds can be found in \cite[Section 2.1.4]{KNbook}. \end{rem}

We  formulate and present  our results in two parts.  The reason is that the first part is likely to hold with proper modifications (in particular, allowing more general infratori) in greater generality,  namely, in the setting similar to that of \cite{KRH2} where no connection is assumed between the rank $k\ge 2$ of the action    and the dimension of the ambient manifold.
Most steps in the proof work in that generality  and remaining difficulties, while substantial, are of technical nature.  
The second part heavily relies on existence  of codimension-one stable manifolds and hence  is specific for the  maximal rank setting.

The   first part    (Theorem~\ref{TMain1})  states in particular that  modulo a finite permutation any such action is  ``arithmetic'', 
i.e. there is a measurable isomorphism  between  the restriction of the action to each ergodic component of a certain finite index subgroup $\Gamma\subset\Zmm$  and a Cartan action  by affine automorphisms of the torus $\Tm$ or its factor $\Tmp$.  This isomorphism has nice topological and  geometric properties that are described in detail below. 
This provides solutions of  most conjectures and problems from  \cite[Sections 8.1]{KKRH}. 

The second part  asserts  that   the restriction of the above mentioned isomorphism to each ergodic component of the group $\Gamma$   extends   to a continuous map between an open set in $M$  and the  complement  to a finite set on $\Tm$ or $\Tmp$ that is a topological semi-conjugacy (a factor-map) between $\a$ and $\ao$  (Theorem~\ref{TMain2}). Furthermore, this map can be 
modified on a set of arbitrary small measure and then  extended to a  homeomorphism between an open set in $M$  and the  complement  to a finite set on $\Tm$ or $\Tmp$.  
This has implications for the topology of $M$, in particular disproving Conjecture 4 from \cite{KKRH}.



Technically the present  paper  builds upon the results of \cite{KKRH, KRH2}. We use  background information from those papers without special references.

\section{Formulation of results}
 
\subsection{The arithmeticity  theorem}

\begin{thm}\label{TMain1}  Let $\a$ be a $C^r, \,1+\theta\le r\le  \infty $ maximal rank positive entropy action 
 on a smooth manifold $M$  of dimension $m\ge 3$.

Then there  exist:
\begin{itemize}

\item  disjoint measurable sets of equal measure  $R_1,\dots, R_n\subset M$ such that $R= \bigcup_{i=1}^n R_i $ has full measure and the action $\a$  cyclically interchanges those sets. Let $\Gamma\subset \Zmm$ be the stationary subgroup of any of the sets $R_i$ ($\Gamma$ is of course isomorphic to $\Zmm$);
 
\item  a Cartan action  $\ao$ of $\Gamma$   by affine transformations   of either the torus $\Tm$ or the infratorus  $\Tmp$ that we will call  {\em the algebraic model}; 
  
 \item measurable maps  $h_i: R_i\to\Tm$ or $h_i: R_i\to\Tmp, \, i=1,\dots, n$; 
 \end{itemize}
 such that
  
\begin{enumerate}

\item \label{TM1} $h_i$ is bijective almost everywhere and $(h_i)_*\mu=\l$, the Lebesgue (Haar) measure on $\Tm$ (correspondingly $\Tmp$);

\item \label{TM2} $\ao\circ h_i=h_i\circ \a$;

\item \label{TM3} for almost every $x\in M$  and every $\bf n\in \Zmm$ the restriction of $h_i$ to the 
stable manifold $W^s_x$ of $x$ with respect to $\a({\bf n})$ is  a $C^{r-\epsilon} $ diffeomorphism for any $\epsilon>0$.

\item \label{TM4} $h_i$ is  $C^{r-\epsilon}$ in the sense of Whitney on a set  whose complement  to $R_i$ has arbitrary small measure; those sets will be described in the course of proof; in particular, they are saturated by  local stable manifolds.

\end{enumerate}
\end{thm}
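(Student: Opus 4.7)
The plan is to follow the rigidity template established in \cite{KRH1, KRH2} but with the codimension-one stable manifolds of the maximal-rank setting playing a central role. Starting from the absolute continuity of $\mu$ proved in \cite{KKRH}, the general position assumption forces $\a$ to have exactly $m$ one-dimensional Lyapunov distributions $E_1,\dots,E_m$, which coincide with their coarse Lyapunov classes. Each $E_i$ integrates to a one-dimensional $C^{r-\epsilon}$ foliation $W_i$ whose leaves are stable manifolds of appropriate elements of $\a$; it is the availability of these codimension-one stable foliations that makes the argument specific to maximal rank.

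Next, on each leaf $W_i(x)$ I would install a canonical affine parameter. Using non-stationary normal forms (Guysinsky--Katok, Katok--Lewis, as refined in \cite{KRH1, KRH2}), each $\a(\bn)$ that contracts $E_i$ is put in the linear form $t \mapsto e^{\chi_i(\bn)}t$ on $W_i(x) \cong \mathbb R$; the parameterization is $C^{r-\epsilon}$ along the leaf, and the key rigidity input is that any other element of $\a$ commuting with such a linear contraction must act affinely, hence by a translation whenever its exponent on $E_i$ vanishes. This step depends essentially on the rank being $\ge 2$ (used here via $m-1 \ge 2$) and immediately delivers \eqref{TM3}.

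I would then assemble the one-dimensional affine parameters into local $\mathbb R^m$-charts around Pesin regular points using the local product structure of the $m$ transverse foliations $W_1,\dots,W_m$. Leafwise rigidity of the normal forms forces transition maps between two such charts to be affine, and commutation with the $(m-1)$-parameter Cartan-type action forces the linear parts to be diagonal with $\pm 1$ entries and the translation parts to lie in a common lattice $L \subset \mathbb R^m$. This yields a measurable developing map $h: M \to \mathbb R^m / E$, where $E$ is an affine group generated by $L$ and possibly the involution $-\Id$, identifying the quotient with $\Tm$ or $\Tmp$. The action $\a$ permutes the coarse Lyapunov distributions $\{E_1,\dots,E_m\}$ through a homomorphism into $S_m$; the preimage of the stabilizer of a given distribution is a finite-index subgroup $\Gamma$, and the ergodic decomposition of $\mu$ under $\Gamma$ supplies the cyclically permuted sets $R_1,\dots,R_n$. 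Items \eqref{TM1} and \eqref{TM2} then follow: bijectivity a.e.\ and $(h_i)_* \mu = \lambda$ from absolute continuity of $\mu$ together with equality of metric entropies across the measurable isomorphism, and equivariance by construction. Item \eqref{TM4} follows from Lusin's theorem applied to the continuous dependence of the normal-form parameters on basepoint over an exhausting family of Pesin sets.

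The main obstacle is the globalization: the one-dimensional affine parameters are only canonically defined up to sign and scale on each leaf, and patching them into a single-valued map to a torus requires controlling the holonomy of the foliations $W_i$ around loops in $M$. Higher-rank commutation again provides the essential rigidity --- holonomies must commute with $\ao$ acting as a Cartan diagonal subgroup, and the only affine maps of $\mathbb R^m$ doing so are translations composed with diagonal $\pm 1$ involutions --- but verifying that the translation part actually lies in a lattice, and consistently fixing the sign ambiguity responsible for the appearance of the infratorus $\Tmp$, is the delicate technical heart of the argument. This is the step where the methods of \cite{KRH1, KRH2} for the toral case must be carefully imported and extended to the abstract manifold setting.
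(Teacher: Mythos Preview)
Your overall architecture --- affine parameters on Lyapunov leaves, a developing map to $\mathbb R^m$, and a ``deck group'' that must be a lattice --- matches the paper's, but two concrete steps are wrong or missing.

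First, your derivation of the sets $R_1,\dots,R_n$ and the subgroup $\Gamma$ is incorrect. You write that ``$\a$ permutes the coarse Lyapunov distributions $\{E_1,\dots,E_m\}$ through a homomorphism into $S_m$'' and take $\Gamma$ to be the stabilizer of a distribution. For an \emph{abelian} action the Lyapunov exponents are linear functionals on $\mathbb Z^{m-1}$ and the associated distributions are invariant under \emph{every} element of $\a$; there is no nontrivial permutation. In the paper the decomposition comes instead from Pesin's ergodic decomposition applied to a single hyperbolic element $\a(\bn)$: some power $\a(k\bn)$ restricted to a positive-measure set $R_1$ is Bernoulli, and $\Gamma$ is then defined as the stabilizer of $R_1$ in $\mathbb Z^{m-1}$. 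The cyclic permutation of the $R_i$ is by cosets of $\Gamma$, not by any symmetry of the Lyapunov data.

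Second, your globalization sketch (``patch local $\mathbb R^m$-charts; transition maps are affine with diagonal $\pm 1$ linear part and lattice translation'') skips the substantive content. The paper does not patch charts: it fixes a base point $x$, and uses the \emph{holonomies} along unstable (resp.\ stable) manifolds to define directly a global map $h_x:\mathbb R^s\times\mathbb R^u\to M$ by $h_x(z^s,z^u)=Hol^u_{x,H^u_x(z^u)}(H^s_x(z^s))$. The hard work is showing these holonomies are defined Lebesgue-a.e.\ and are affine in the parameters (Proposition~\ref{unifholonomy}); this is proved by induction on $\dim\W^s_\cC$ using the coherence of affine structures across Weyl chambers. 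The ``deck group'' $\Gamma_x$ then consists of affine maps $L$ with $h_x\circ L=h_x$. Commutation with $\a$ only gives that linear parts of $L$ are diagonal, \emph{not} that they are $\pm\Id$ or that the translation parts form a lattice. That conclusion (Proposition~\ref{the infratorus}) requires a separate recurrence argument: if the translation subgroup $Tr_x$ spanned a proper subspace $E(x)\subsetneq\mathbb R^m$, one chooses $\bn$ so that the cocycle $\hat\a_x(\bn)$ expands the volume of $E(x)/Tr_x$ exponentially, contradicting Poincar\'e recurrence of the measurable function $x\mapsto\mathrm{vol}(E(x)/Tr_x)$. The case $Tr_x=\{0\}$ is ruled out by showing $h_x$ would then be an a.e.\ bijection conjugating $\a$ to an affine action on $\mathbb R^m$, which has no positive-entropy invariant probability measure. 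Only after $Tr_x\cong\mathbb Z^m$ is established does one deduce (via the Cartan structure of the lifted action on $\mathbb T^m$) that $\Gamma_x/\mathbb Z^m$ is either trivial or $\{\pm\Id\}$. This volume/recurrence step is the genuine heart of the argument and is absent from your proposal.
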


Sometimes, when this cannot cause confusion,  we will call actions satisfying assumptions of Theorem~\ref{TMain1} simply {\em maximal rank actions}.


\begin{rem} Statement \eqref{TM4} implies that the measure $\mu$ is absolutely continuous. However, as we  mentioned before, this fact  is  the principal result of \cite{KKRH}  and it forms a basis of the proof of Theorem~\ref{TMain1}. 
\end{rem}

\begin{rem} Statements \eqref{TM1} and \eqref{TM2} imply that $h_i$ is a measurable isomorphism between 
$(\a,\mu)$,  restricted to  the set  $R_i$ and subgroup $\Gamma$,    and the algebraic model $(\ao,\l)$.
\end{rem} 

\begin{rem} It follows form  \eqref{TM1} and \eqref{TM2} that  for $n=1$ the action $\a$ is  weakly mixing (and, in fact, mixing); for $n> 1$  or equivalently, if $\Gamma\neq \Zmm$,  $\Gamma$ action is not ergodic 
but its restriction  to any of its $n$ ergodic components is weakly mixing (and mixing). 
\end{rem}

\begin{rem} Statement  \eqref{TM3}   immediately  implies that  Jacobians  along Lyapunov  foliations 
are   measurably cohomologous to constants (exponents for the algebraic model), thus solving Conjecture 1 from \cite{KKRH}.  Furthermore, the transfer function is smooth along the Lyapunov foliations. 
\end{rem} 


\subsection{Corollaries from arithmeticity}

\subsubsection{Entropy and Lyapunov exponents}
Theorem~\ref{TMain1}   immediately implies  the solution of Problem 1 and Conjecture 2 from \cite{KKRH}. In fact,  
 description of  Cartan (maximal rank) actions on the torus via units in  the algebraic number fields  given in \cite{AKSKKS} provides more precise information. We consider the weakly mixing case first. 
\begin{coro}\label{coro1} 
Let $\a$ be a $C^{1+\theta}, \theta>0$ weakly mixing  maximal rank positive entropy $\Zmm$ action. 
There exists a totally real  algebraic number field  $K$ of degree $m$, 
that is a simple extension of $\Q$  uniquely determined by $\a$, and,  for any system of generators   of $\a$, an  $(m-1)$-tuple of multiplicatively independent units  $\lambda_1,\dots, \lambda_{m-1}$ in $K$ such that the Lyapunov characteristics  exponents for  those  generators of $\a$  are $$\log|\phi_1(\lambda_i)|,\dots ,|\log\phi_{m}(\lambda_i)|, \,\, i=1,\dots, m-1. $$
where $\phi_1, \dots, \phi_{m}$ are different embeddings of $K$ into $\mathbb R$. 
\end{coro}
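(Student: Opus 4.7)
My plan is to reduce the corollary to the classification of Cartan actions on tori via affine automorphisms given in \cite{AKSKKS}, using the measurable algebraic model produced by Theorem~\ref{TMain1}. Since $\a$ is weakly mixing, the remarks following Theorem~\ref{TMain1} force $n=1$ and $\Gamma=\Zmm$, so the theorem yields a measurable isomorphism $h:M\to\Tm$ or $h:M\to\Tmp$ with $h_*\mu=\l$ intertwining $\a$ and the algebraic Cartan model $\ao$. The crucial first step is showing that $h$ preserves the Lyapunov spectrum for every element. This follows from part~\eqref{TM3} of Theorem~\ref{TMain1}: $h$ restricts to a $C^{1+\theta-\epsilon}$ diffeomorphism along every stable manifold $W^s_x$ of every $\a(\bn)$. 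Under the general position assumption \eqref{(1)}, each one-dimensional Lyapunov foliation of $\a$ sits inside the stable manifold of some $\a(\bn)$, so the differential of $h$ along it transports the Lyapunov exponent to that of $\ao$. The Lyapunov data of $(\a,\mu)$ therefore coincides with that of $(\ao,\l)$.

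Next I reduce to the case of an honest affine action on $\Tm$. If the algebraic model is on $\Tmp=\Tm/\{\pm\Id\}$, I lift to the double cover: affine maps descending to $\Tmp$ have linear parts commuting with $-\Id$ (this is automatic), so the linear parts lift to affine automorphisms of $\Tm$, and passing to a finite cover neither alters eigenvalues nor Lyapunov exponents. Thus it suffices to analyze an affine Cartan $\Zmm$-action $\ao$ on $\Tm$ whose linear parts form a subgroup $\cL\subset GL(m,\Z)$ of commuting, simultaneously $\R$-diagonalizable matrices with simple real spectrum.

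Now I invoke \cite{AKSKKS}: any such $\cL\cong\Zmm$ is a finite index subgroup of the group of units in an order of a totally real number field $K$ of degree $m$ over $\Q$. Concretely, choosing any element $A\in\cL$ with irreducible characteristic polynomial (elements in general position work, and one exists because $\cL$ acts with simple joint spectrum and rank $m-1$), any eigenvalue $\l$ of $A$ generates $K=\Q(\l)$; this is automatically a simple extension, totally real of degree $m$, and the $m$ eigenvalues of $A$ are precisely $\phi_1(\l),\dots,\phi_m(\l)$ for the $m$ distinct real embeddings $\phi_j:K\myarrow\R$. Each generator $\ao(e_i)$ then corresponds to a unit $\l_i\in K$ whose conjugates are the eigenvalues of the linear part of $\ao(e_i)$; Dirichlet's unit theorem guarantees exactly $m-1$ multiplicatively independent units, matching the rank. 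The Lyapunov exponents of $\a$ for the $i$-th generator are therefore $\log|\phi_j(\l_i)|$, $j=1,\dots,m$. Uniqueness of $K$ up to isomorphism follows because $K$ is intrinsically recovered as $\Q(\l)$ for any eigenvalue of any element of $\ao(\Gamma)$ with irreducible characteristic polynomial.

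The main potential obstacle is the Lyapunov transfer through the measurable conjugacy $h$, but this is essentially immediate from the smooth structure of $h$ along stable manifolds in \eqref{TM3} combined with the general position hypothesis \eqref{(1)}; everything else is pure algebra imported from \cite{AKSKKS} and Dirichlet's unit theorem.
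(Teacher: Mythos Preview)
Your proposal is correct and matches the paper's intended argument: the paper does not give a self-contained proof of Corollary~\ref{coro1}, but rather presents it as an immediate consequence of Theorem~\ref{TMain1} together with the arithmetic description of Cartan actions in \cite{AKSKKS}, exactly as you do. Your justification for the transfer of Lyapunov exponents via part~\eqref{TM3} is in line with the paper's own remark that \eqref{TM3} makes the Jacobians along Lyapunov foliations cohomologous to the constants coming from the algebraic model; the lift from $\Tmp$ to $\Tm$ and the appeal to the unit-group description in \cite{AKSKKS} are likewise the expected steps.
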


In the  general case one applies Corollary~\ref{coro1} to the restriction of the action to the stationary 
subgroup $\gamma$ for each of the sets $R_i$.  Those restrictions  for $i=1,\dots, n$ are isomorphic and hence 
have the same entropy that is also equal  to the entropy of $\a(\gamma)$ for any $\gamma\in\Gamma$ with respect to the non-ergodic measure $\mu$. Let $k$  be the index of $\Gamma$. Since $k$-th power of any element of $\Zmm$ lies in $\Gamma$ and every element is a power of a generator  one immediately obtains description of exponents in the general case. 

\begin{coro} Lyapunov exponents of any element of a maximal rank action $\a$  have the form 
$$ \frac{|\log\phi_i(\lambda)|}{k},\dots, \frac{|\log\phi_{m}(\lambda)|}{k}$$
where $\lambda$ is a  unit in a totally real  algebraic number field   of degree $m$, and $k$ is a positive integer that depends only on $\a$ but not on the element. Here as before $\phi_1, \dots, \phi_{m}$ are different embeddings of the field into $\mathbb R$. 
\end{coro}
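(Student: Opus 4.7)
The statement follows by combining Theorem~\ref{TMain1} with Corollary~\ref{coro1} and the basic scaling property of Lyapunov exponents under taking powers. My plan is as follows.

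First, I would apply Theorem~\ref{TMain1} to obtain the finite-index subgroup $\Gamma\subset\Zmm$, its ergodic components $R_1,\dots,R_n$, and measurable conjugacies $h_i$ from $(\a|_\Gamma,\mu|_{R_i})$ to the Cartan algebraic model $(\ao,\l)$ on $\Tm$ or $\Tmp$. Since the sets $R_i$ are cyclically permuted by $\a$, for any $\gamma\in\Gamma$ the Lyapunov spectrum of $\a(\gamma)$ on each $R_i$ is the same, and this spectrum agrees with the Lyapunov spectrum of the matching element of the algebraic model $\ao$. Applying Corollary~\ref{coro1} to $\ao$ (which is weakly mixing on each $R_i$) gives a totally real number field $K$ of degree $m$, depending only on $\a$, together with the identification of generators of $\Gamma$ with units of $K$ whose logarithms of Galois conjugates give the Lyapunov exponents.

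Next, set $k$ to be the index $[\Zmm:\Gamma]$, which by Theorem~\ref{TMain1} equals $n$ and is determined by $\a$ alone. For an arbitrary element $\bn\in\Zmm$, one has $k\bn\in\Gamma$ because $\Zmm/\Gamma$ is a finite group of order $k$, so every coset is killed by multiplication by $k$. Consequently $\a(\bn)^k=\a(k\bn)$ belongs to $\a(\Gamma)$, and under the isomorphism with the algebraic model it corresponds to some unit $\l\in K$ expressible as a product of integer powers of $\l_1,\dots,\l_{m-1}$. Corollary~\ref{coro1} then yields that the Lyapunov exponents of $\a(k\bn)$ with respect to $\mu$ are exactly $|\log\phi_i(\l)|$ for $i=1,\dots,m$.

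Finally, I would invoke the elementary fact that if $f$ is a measurable map and $\mu$ an invariant probability measure then the Lyapunov exponents of $f^k$ are $k$ times the Lyapunov exponents of $f$. Dividing by $k$ converts the exponents of $\a(k\bn)=\a(\bn)^k$ into the exponents of $\a(\bn)$, which gives the stated formula $|\log\phi_i(\l)|/k$. The only point requiring mild care is to check that $k$ truly depends only on $\a$ (not on $\bn$ or on the chosen generators), which is immediate from $k=[\Zmm:\Gamma]$ and the fact that the decomposition in Theorem~\ref{TMain1} is intrinsic; no substantive obstacle arises since all the hard analytic work has already been carried out in Theorem~\ref{TMain1} and Corollary~\ref{coro1}.
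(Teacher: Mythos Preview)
Your argument is correct and follows essentially the same route as the paper's own derivation: restrict to the finite-index subgroup $\Gamma$ from Theorem~\ref{TMain1}, apply Corollary~\ref{coro1} on each ergodic component, let $k=[\Zmm:\Gamma]$, use that $k\bn\in\Gamma$ for every $\bn$, and divide the exponents of $\a(k\bn)$ by $k$. Your write-up is in fact somewhat more explicit than the paper's (e.g.\ you spell out why $k\bn\in\Gamma$ via Lagrange's theorem), but the underlying ideas are identical.
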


Since entropy of an action element is equal to the Mahler measure of the corresponding unit we can use exponential below estimate for the Mahler measure for totally real fields \cite{Schnizel, HS} to obtain a lower bound on entropy.

\begin{coro}\label{entropyboundWM} The entropy of any element of  a weakly mixing maximal entropy action on an $m$-dimensional manifold is bounded from below by $cm$, where $c$ is a universal constant.
\end{coro}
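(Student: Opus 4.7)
The plan is to chain together three ingredients: Pesin's entropy formula (applicable since $\mu$ is absolutely continuous by Theorem~\ref{TMain1}\eqref{TM4}, or equivalently by the main result of \cite{KKRH}), the explicit description of Lyapunov exponents from Corollary~\ref{coro1}, and the classical Schinzel lower bound on Mahler measures in totally real fields.

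First I would fix a non-identity element $\a(\mathbf n)$. Pesin's formula gives
\[
h_\mu(\a(\mathbf n)) \;=\; \sum_{i=1}^{m}\log^{+}|\phi_i(\lambda)|,
\]
where $\lambda = \lambda_1^{n_1}\cdots\lambda_{m-1}^{n_{m-1}}$ is the unit associated to $\mathbf n$ via Corollary~\ref{coro1}. Since $K$ is totally real, all embeddings are real and $\sum_{i=1}^{m}\log|\phi_i(\lambda)| = \log|N_{K/\Q}(\lambda)| = 0$; hence the sum of positive exponents equals the logarithmic Mahler measure of $\lambda$ (computed as an element of $K$).

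Next I would account for the possibility that $\lambda$ generates a proper subfield $K'\subset K$ of degree $d\mid m$. In that case the $m$ embeddings of $K$ restrict to the $d$ embeddings of $K'$ in packets of size $m/d$, so the quantity $\sum_{i=1}^{m}\log^{+}|\phi_i(\lambda)|$ equals $(m/d)\log M_0(\lambda)$, where $M_0(\lambda)$ is the genuine Mahler measure of $\lambda$ as an algebraic integer of degree $d$. The Schinzel lower bound \cite{Schnizel} (and its refinements \cite{HS}) asserts that any totally real algebraic integer of degree $d$ that is not a root of unity satisfies $M_0 \geq ((1+\sqrt{5})/2)^{d/2}$. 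Multiplying the two estimates yields
\[
h_\mu(\a(\mathbf n)) \;\geq\; \frac{m}{d}\cdot\frac{d}{2}\log\frac{1+\sqrt 5}{2} \;=\; \frac{m}{2}\log\frac{1+\sqrt 5}{2},
\]
which is the desired bound with $c=\tfrac12\log\tfrac{1+\sqrt 5}{2}$.

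The only point requiring verification is the hypothesis of Schinzel's inequality, namely that $\lambda$ is not a root of unity. Since $K$ is totally real, its only roots of unity are $\pm 1$, so this reduces to checking $\lambda\neq\pm 1$ for $\mathbf n\neq 0$, which is precisely the multiplicative independence of $\lambda_1,\dots,\lambda_{m-1}$ modulo torsion asserted in Corollary~\ref{coro1}. I do not expect any serious obstacle: once the preceding corollaries are in hand, the argument is essentially a citation of the Schinzel--Smyth lower bound together with the trivial bookkeeping to pass between a subfield and $K$.
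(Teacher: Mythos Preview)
Your proposal is correct and follows exactly the route the paper sketches: identify the entropy of $\a(\mathbf n)$ with the logarithmic Mahler measure of the associated unit in the totally real field $K$ and then invoke the Schinzel lower bound \cite{Schnizel,HS}. Your write-up is in fact more detailed than the paper's one-line justification---in particular the subfield bookkeeping and the verification that $\lambda\neq\pm1$ are left implicit there---and note that you could bypass Pesin's formula altogether, since the measurable conjugacy of Theorem~\ref{TMain1} already gives $h_\mu(\a(\mathbf n))=h_\lambda(\ao(\mathbf n))=\sum_i\log^+|\phi_i(\lambda)|$ directly.
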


\subsubsection{Entropy and isomorphism rigidity} 
 Eigenvalues of an integer matrix $A$,  when simple and real,  determine  
its conjugacy class over $\mR$ and hence over $\Q$.  Assume that $\det A=\pm 1$. That in turn determines   a conjugacy  class of the automorphism of the torus $F_A$  up to a common finite factor or finite extension.  By \cite[Theorem 5.2]{AKSKKS} for a broad class of   $\Zk, k\ge 2$ actions  by
automorphisms of a torus, that includes all  Cartan   actions,  measure theoretic isomorphism (with respect to Lebesgue measure) implies algebraic   isomorphism. 

Notice that passing to a finite factor or finite extension  does not change entropy. Likewise the entropy for affine actions with the same linear parts are the same.  By symmetry  the entropy does not change if all generators of an action are replaced by their inverses. Theorem~\ref{TMain1}  allows to show that  entropy function determines 
a maximal rank action action on a finite index subgroup  up to a measurable isomorphism  with above mentioned 
trivial modifications. 
We call  the next statement  a corollary,  despite  the length of the   argument needed to deduce it from Theorem~\ref{TMain1}. The point is that the argument is  purely algebraic and  deals  only with linear actions, modulo choosing  appropriate finite index subgroups.

\begin{coro}\label{entropyrigidity} Let $\a$ and $\a'$ be two maximal rank actions. Assume that they are both weakly mixing and their entropy functions coincide.  Then restrictions of   $\a$ and $\a'$ to a certain subgroup $\Gamma\subset \Zmm$ of finite index are finite factors  of measurably isomorphic actions, possibly with replacing all generators of one action by their inverses. 
\end{coro}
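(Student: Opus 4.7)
The plan is to reduce everything to a purely algebraic problem about linear Cartan $\Zmm$-actions on $\Tm$. Apply Theorem~\ref{TMain1} to both $\a$ and $\a'$; by weak mixing and Remark~3, one has $n=1$ and the two actions are globally measurably conjugate to Cartan affine models $\ao$ and $\ao'$ on $\Tm$ or $\Tmp$, with the entropy function transported across these isomorphisms. Lifting infratorus actions through the $2{:}1$ cover $\Tm\to\Tmp$, and absorbing translational parts into the origin (possible because Cartan elements have no eigenvalue $1$), I may assume $\ao,\ao'$ are linear Cartan homomorphisms $\rho,\rho':\Zmm\to SL(m,\Z)$ with identical entropy functions. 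Once $\rho$ and $\rho'$, restricted to a finite-index subgroup $\Gamma\subset\Zmm$ and after possibly inverting one of them, are shown to be conjugate by some $Q\in GL(m,\Q)$, clearing the denominators of $Q$ produces an integer isogeny $\Tm\to\Tm$ intertwining the two linear actions, exhibiting them as finite factors of a common toral action; transporting back through Theorem~\ref{TMain1} and invoking Theorem~5.2 of \cite{AKSKKS} then finishes the corollary.

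The heart of the argument is recovering the Lyapunov multiset from the entropy. Write $h(\bn)=\sum_{i=1}^m\max(0,\chi_i(\bn))$, where $\chi_i$ are the Lyapunov functionals on $\Zmm$. The function $h$ extends by homogeneity and continuity to a piecewise linear function on $\R^{m-1}$ whose singular locus is $\bigcup_i\{\chi_i=0\}$ and whose linear forms on two adjacent chambers sharing the wall $\{\chi_i=0\}$ differ by $\pm 2\chi_i$. Hence $h$ determines the multiset $\{\pm\chi_i\}_{i=1}^m$. Applied to both $\rho$ and $\rho'$, this yields a permutation $\sigma$ and signs $\epsilon_i\in\{\pm 1\}$ with $\chi'_{\sigma(i)}=\epsilon_i\chi_i$. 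Since $\det\rho(\bn)=\det\rho'(\bn)=\pm 1$, both $\sum\chi_i=0$ and $\sum\chi'_i=0$, forcing $\sum_{\epsilon_i=+1}\chi_i=0$. The general position hypothesis makes any $m-1$ of $\chi_1,\dots,\chi_m$ linearly independent, so $\sum\chi_i=0$ is their unique linear relation and no proper nonempty subsum vanishes. Therefore all $\epsilon_i=+1$ or all $\epsilon_i=-1$; replacing $\rho'$ by its inverse in the latter case, $\chi'_{\sigma(i)}=\chi_i$.

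Now pass to a finite-index $\Gamma\subset\Zmm$ on which every $\rho(\bn)$ and $\rho'(\bn)$ has positive real eigenvalues (the common kernel of the two finite-order sign characters on $\Zmm$). On $\Gamma$ the Lyapunov exponents are literal logarithms of eigenvalues, so $\rho(\bn)$ and $\rho'(\bn)$ share characteristic polynomials for every $\bn\in\Gamma$. Pick a regular $\bn_0\in\Gamma$: $\rho(\bn_0)$ has distinct real eigenvalues, hence is semisimple with its $\Q$-conjugacy class pinned down by its characteristic polynomial, giving $Q\in GL(m,\Q)$ with $Q\rho(\bn_0)Q^{-1}=\rho'(\bn_0)$. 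Every other $\rho(\bn)$ commutes with $\rho(\bn_0)$, so $Q\rho(\bn)Q^{-1}$ commutes with $\rho'(\bn_0)$, hence lies in $\Q[\rho'(\bn_0)]$; having the same characteristic polynomial as $\rho'(\bn)$ and $\rho'(\bn_0)$ having distinct eigenvalues, it must coincide with $\rho'(\bn)$. Thus $Q$ simultaneously intertwines $\rho|_\Gamma$ with $\rho'|_\Gamma$ over $\Q$, completing the required algebraic conjugacy.

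The main obstacle is the sign-pattern analysis in the second paragraph: the general position hypothesis does essential work there, because without it reciprocal-symmetric characteristic polynomials could allow Cartan actions with the same entropy function but not conjugate over $\Q$. The remaining ingredients --- the piecewise-linear extraction of the Lyapunov multiset, the reductions from affine to linear and from infratorus to torus, and the passage from a $\Q$-conjugacy to a finite-factor measurable isomorphism via clearing denominators --- are essentially bookkeeping built on top of Theorem~\ref{TMain1} and the algebraic rigidity of \cite{AKSKKS}.
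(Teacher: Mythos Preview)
Your approach is largely sound and parallels the paper's, but the final $\Q$-conjugacy step contains a genuine gap. You claim that $Q\rho(\bn)Q^{-1}$ and $\rho'(\bn)$, both lying in the field $\Q[\rho'(\bn_0)]$ and sharing a characteristic polynomial, must coincide. This is false in general: two elements of a number field with the same characteristic polynomial are merely Galois conjugates, not equal (e.g.\ $\sqrt 2$ and $-\sqrt 2$ in $\Q(\sqrt 2)$). The repair is immediate using what you already proved: since $\chi_i=\chi'_{\sigma(i)}$ after your relabeling, the conjugation $Q$ carries each $\lambda$-eigenspace of $\rho(\bn_0)$ to the $\lambda$-eigenspace of $\rho'(\bn_0)$, and on that common eigenspace both $Q\rho(\bn)Q^{-1}$ and $\rho'(\bn)$ act by the same scalar $e^{\chi_i(\bn)}$; hence they agree. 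So the conclusion stands, but the justification must invoke the eigenspace-by-eigenspace matching of exponents, not merely equality of characteristic polynomials. The paper sidesteps this issue by citing \cite[Proposition~3.8]{AKSKKS} directly.

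Your sign-consistency argument differs from the paper's and is arguably cleaner. The paper identifies, for each exponent $\chi$, the Weyl chamber $\mathcal C_\chi$ in which $\chi$ is the unique positive exponent (so that entropy equals $\chi$ there), and shows that mixed signs would force the two entropy functions to disagree on such a chamber. You instead combine $\sum_i\chi_i=0$ with general position (any $m-1$ of the $\chi_i$ are independent, so no proper nonempty subsum vanishes) to force all $\epsilon_i$ equal. Two small corrections: the jump in the linear form of $h$ across the wall $\{\chi_i=0\}$ is $\pm\chi_i$, not $\pm 2\chi_i$ (this does not affect your conclusion), and your closing invocation of \cite[Theorem~5.2]{AKSKKS} is unnecessary, since clearing denominators in $Q$ already produces the finite-factor relation and Theorem~\ref{TMain1} transports it back to $\a$ and $\a'$.
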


\begin{proof} Let $\ao$ and $\ao'$  be the  algebraic  models for $\a$ and $\a'$. Weak mixing 
implies that for both of them $n=1$. Now take finite covers  $\tilde{\ao}$ and $\tilde{\ao'}$ (if necessary) 
 that are actions by affine  transformations of  $\Tm$.  Take finite index subgroups  of $\Zmm$ for which $\tilde{\ao}$ and $\tilde{\ao'}$ act by automorphisms.  Taking the intersection  of those subgroups   obtain a finite index subgroup $\Gamma_1$ for which both $\tilde{\ao}$ and $\tilde{\ao'}$ act by automorphisms. Those restrictions still have identical entropy functions.  Now there is a subgroup $\Gamma_2$ of finite index such that eigenvalues for all generators of both actions are positive. Let $\Gamma=\Gamma_1\cap\Gamma_2$. Since eigenvalues are simple they are thus determined for the $\Gamma$  action by Lyapunov exponents. But by \cite[Proposition 3.8]{AKSKKS}
irreducible (in particular, Cartan)  actions by automorphisms with the same eigenvalues of their generators are
algebraically conjugate  to finite factors of the same action. 
 
Let us show that  Lyapunov exponents are in turn determined by the entropy function, possibly with replacing all generators by their inverses. To see that, notice first that entropy function is not differentiable exactly at the union of kernel of the Lyapunov exponents, the Lyapunov hyperplanes. Thus it determines every Lyapunov exponent up to a scalar multiple. In the Cartan case for each Lyapunov exponent $\chi$  there is exactly one Weyl chamber $\mathcal C_\chi$ where this exponent is positive and  all other negative. Inside this Weyl chamber  entropy is equal to $\chi$. This Weyl chamber  and its opposite $-\mathcal C_\chi$ are determined from the configuration of Lyapunov hyperplanes as the only two whose boundaries intersect all Lyapunov hyperplanes except for $\Ker\chi$. Thus for every Lyapunov exponents  $\chi$ of $\alpha$ there is en exponent $\chi'$ of $\a'$
such that  is equal to either $\a$ or  $-\a$. Let us show that for all exponents the sign is the same. For, suppose
that for two exponents $\chi_1$ and $\chi_2$ of $\a$ there are exponents $\chi_1$ and $-\chi_2$ of $\a'$. 
then in the Weyl chamber $\mathcal C_{\chi_{1}}$ $\chi_2$ is negative, hence in this Weyl chamber the entropy
of $\a'$ is at least $\chi_1-\chi_2$, i.e greater than the entropy of $\a$. 
Thus all exponents of $\a$ and $\a'$ are either equal or have opposite signs. In the latter case we  can change generators of $\a'$ to their inverses and obtain actions with  equal exponents.  \end{proof}

In the case of maximal rank actions that are not weakly mixing one restricts the action to the stationary subgroup $\Gamma$  for each of the sets $R_i$  and to apply Corollary~\ref{entropyrigidity}  to each of those  sets. 
An obvious additional invariant is the index of $\Gamma$. It can be determined, for example, from 
 the discrete spectrum of the action. This spectrum  determines 
how different elements of the action interchange the sets $R_i$ A conjugacy  between restrictions of the action of $\gamma$ to different sets $R_i$ can be effected by  using the action of elements from corresponding  cosets of $\Gamma$. Hence  Corollary~\ref{entropyrigidity} can be simultaneously applied to those sets. 
\begin{coro}\label{entropyrigidityNWM} Let $\a$ and $\a'$ be two maximal rank actions. Assume that they have the same discrete spectrum and their entropy functions coincide.  Then restrictions of   $\a$ and $\a'$ to a certain subgroup $\Gamma\subset \Zmm$ of finite index are finite factors  of measurably isomorphic actions, possibly with replacing all generators of one action by their inverses. 
\end{coro}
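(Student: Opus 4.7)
The plan is to reduce the statement to the weakly mixing case already handled by Corollary~\ref{entropyrigidity}, applied on a single pair of ergodic components, and then to glue the pieces back together using the permutation structure supplied by Theorem~\ref{TMain1}. First I would apply Theorem~\ref{TMain1} to both $\a$ and $\a'$, producing disjoint partitions $M=\bigsqcup_{i=1}^{n}R_i$ and $M=\bigsqcup_{j=1}^{n'}R'_j$, stationary subgroups $\Gamma,\Gamma'\subset\Zmm$ of indices $n,n'$, and algebraic models $\ao,\ao'$ on a torus or infratorus. Since each $\Gamma$-ergodic component is weakly mixing (third remark after Theorem~\ref{TMain1}), the discrete spectrum of $\a$ is precisely the annihilator $\Gamma^{\perp}\subset\widehat{\Zmm}$, and similarly for $\a'$. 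The hypothesis of equal discrete spectra therefore forces $\Gamma=\Gamma'$, and in particular $n=n'$.

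With the stationary subgroup now common to the two actions, I would apply Corollary~\ref{entropyrigidity} to the weakly mixing $\Gamma$-actions $\a|_{R_1}$ and $\a'|_{R'_1}$. Their entropy functions coincide on $\Gamma$: for any $\gamma\in\Gamma$, the entropy of $\a(\gamma)$ with respect to the non-ergodic measure $\mu$ equals the entropy on each ergodic component $R_i$, and the same holds for $\a'$. The corollary then yields a finite-index subgroup $\Gamma_0\subset\Gamma$ together with a measurable isomorphism between finite factors of $\a|_{R_1}$ and $\a'|_{R'_1}$ as $\Gamma_0$-actions, possibly after inverting all generators of $\a'$. Passing to the intersection of the finitely many $\Zmm$-conjugates of $\Gamma_0$ allows me to assume $\Gamma_0\triangleleft\Zmm$.

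To globalize this to a measurable isomorphism of $\Gamma_0$-actions on all of $M$, I would choose coset representatives $\bn_1=0,\bn_2,\dots,\bn_n\in\Zmm$ for $\Gamma$ such that $\a(\bn_i)R_1=R_i$. The identification of discrete spectra supplies a canonical equivariant isomorphism between the permutation representations of $\Zmm/\Gamma$ on $\{R_i\}$ and $\{R'_j\}$, so after relabeling the $R'_j$ I may assume $\a'(\bn_i)R'_1=R'_i$ as well. I would then transport the isomorphism on $(R_1,R'_1)$ to each $(R_i,R'_i)$ via the pair $\a(\bn_i),\a'(\bn_i)$, obtaining a globally defined measurable isomorphism. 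Normality of $\Gamma_0$ ensures $\Gamma_0$-equivariance is preserved across components, and the inversion option from the weakly mixing case is a single global operation on $\Gamma_0$ that commutes with every conjugation, so it propagates consistently.

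The main technical point will be the matching of labellings in the final step: one must verify that the identification of permutation actions supplied by the discrete spectrum is not merely an abstract isomorphism of $\Zmm/\Gamma$-sets, but a concrete identification under which the chosen coset representatives $\bn_i$ act in the same way on both partitions. This is where the hypothesis of literal equality of discrete spectra, rather than mere abstract isomorphism of spectral invariants, plays its essential role.
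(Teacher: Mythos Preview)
Your approach is correct and is essentially the one the paper sketches in the paragraph preceding the corollary: identify the stationary subgroup $\Gamma$ from the discrete spectrum, apply Corollary~\ref{entropyrigidity} on a single ergodic component, and transport the resulting isomorphism to the other components via coset representatives. Two minor remarks: the step of passing to the intersection of $\Zmm$-conjugates of $\Gamma_0$ is vacuous, since every subgroup of the abelian group $\Zmm$ is already normal; and your final paragraph overstates the difficulty of matching labellings, since both permutation actions are transitive $\Zmm/\Gamma$-sets of cardinality $n$ and hence regular, so after fixing $R_1$ and $R'_1$ arbitrarily one may simply \emph{define} $R'_i:=\a'(\bn_i)R'_1$ without appealing to anything further about the discrete spectrum.
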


\subsubsection{Cocycle rigidity} Proper classes of cocycles  over actions that we consider are Lyapunov Holder and Lyapunov smooth, i.e. measurable cocycles that are  Holder or smooth correspondingly along Lyapunov foliations almost everywhere;    see \cite[Definition 8.1]{KRH2} for precise definition. Any such cocycle over a $C^{\infty}$ maximal rank action can be transferred to a cocycle over a finite extension of the linear model the same way as is described in the proof of \cite[Theorem 2.8]{KRH2}. This  proof works verbatim in our case and  produces cocycle rigidity.

\begin{coro} Any Lyapunov Holder (corr. Lyapunov smooth)  real valued  cocycle over a $C^r, 1+\theta\le r \le \infty$ maximal rank action is cohomologous to a constant cocycle  via a Lyapunov Holder (corr. Lyapunov smooth) transfer function  (with the obvious proviso that  the Lyapunov regularity of the transfer function  is less that regularity  of the action). 
\end{coro}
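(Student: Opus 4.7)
The plan is to reduce the problem to the already-known cocycle rigidity for Cartan actions on the torus by transferring the cocycle through the measurable isomorphism $h_i$ provided by Theorem~\ref{TMain1}, and then pulling the algebraic transfer function back to $M$ using that $h_i$ is smooth along stable, hence Lyapunov, leaves.

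First, I would restrict attention to the subgroup $\Gamma\subset\Zmm$ and one of the ergodic components $R_i$. On $R_i$, Theorem~\ref{TMain1}\eqref{TM2} gives a measurable conjugacy $h_i:(R_i,\a|_\Gamma,\mu)\to(\Tm\text{ or }\Tmp,\ao,\l)$. Given a Lyapunov Hölder (resp. Lyapunov smooth) cocycle $\beta$ over $\a$, define $\beta_0=\beta\circ h_i^{-1}$ on the algebraic model. Because the stable manifolds of $\ao$ are smooth images of the stable manifolds of $\a$ under $h_i$ (by \eqref{TM3}), and the Lyapunov foliations for $\a$ and $\ao$ correspond under $h_i$, the transferred cocycle $\beta_0$ is again Lyapunov Hölder (resp. Lyapunov smooth) in the sense of \cite[Definition~8.1]{KRH2}: the restriction of $\beta_0$ to each Lyapunov leaf of $\ao$ is conjugate via a $C^{r-\e}$ diffeomorphism to the restriction of $\beta$ to a Lyapunov leaf of $\a$.

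Next, apply \cite[Theorem~2.8]{KRH2} (or its finite-extension version for the $\Tmp$ case) to $\beta_0$. That result produces a Lyapunov Hölder (resp. Lyapunov smooth) transfer function $H_0:\Tm\to\mR$ such that $\beta_0(\gamma,\cdot)=H_0\circ\ao(\gamma)-H_0+c(\gamma)$ for some homomorphism $c:\Gamma\to\mR$. Pulling back, set $H_i=H_0\circ h_i$ on $R_i$; then $H_i$ realizes the coboundary relation for $\beta|_{R_i}$ with constant part $c$, and inherits the Lyapunov regularity along the leaves of $\a$ because $h_i$ is a $C^{r-\e}$ diffeomorphism along each stable (hence each Lyapunov) manifold. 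The regularity loss is exactly the ``$\e$'' proviso stated in the corollary.

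To finish, I would piece together the components. For the full action of $\Zmm$, extend $H$ from $R_1$ to all of $R=\bigcup R_i$ by using generators of $\Zmm/\Gamma$ to push forward $H_1$ and simultaneously adjusting by the cocycle relation, so that $\beta(\bn,\cdot)-H\circ\a(\bn)+H$ is constant for every $\bn\in\Zmm$; the cyclic permutation structure of $R_1,\dots,R_n$ makes this unambiguous, and the resulting constants define a homomorphism $\Zmm\to\mR$. The main obstacle I anticipate is purely bookkeeping: verifying that the Lyapunov regularity of $\beta_0$ on the algebraic side really matches the hypotheses of \cite[Theorem~2.8]{KRH2} (which is formulated for cocycles over a linear model), since our $h_i$ is only measurable globally and only smooth along leaves. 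Once the notion of Lyapunov Hölder/smooth is shown to be invariant under such leafwise-smooth measurable conjugacies — a direct check from the definition — the argument indeed goes through verbatim as claimed in the text.
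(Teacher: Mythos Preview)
Your proposal is correct and follows exactly the approach the paper indicates: transfer the cocycle through the measurable isomorphism $h_i$ to the algebraic model (passing to a finite torus cover when the model is $\Tmp$), invoke \cite[Theorem~2.8]{KRH2} there, and pull the transfer function back using the leafwise smoothness of $h_i$ from Theorem~\ref{TMain1}\eqref{TM3}. The paper's own argument is a single sentence stating that the proof of \cite[Theorem~2.8]{KRH2} ``works verbatim,'' so your sketch is in fact more explicit than the paper, and the bookkeeping issues you flag (Lyapunov regularity under leafwise-smooth conjugacy, gluing across the ergodic components $R_i$ to recover the full $\Zmm$-cocycle) are precisely the details left implicit there.
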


\subsection{The topology theorem}
Let $L$ denotes either a torus $\Tm$ or the infratorus $\Tmp$; 


 \begin{thm}\label{TMain2} Let $\a$ be a $C^r, \,1+\theta\le r\le  \infty $ maximal rank positive entropy action, then 
 \begin{enumerate}
 \item the sets $R_1,\dots, R_n$ in Theorem \ref{TMain1} can be chosen inside  open $\Gamma$-invariant subsets
 $O_1,\dots, O_n$, that are also  interchanged by $\a$;
 \item  each map  $h_i$ extends to a continuous map  $\tilde h_i: O_i\to L\setminus F$ where 
 $F$ is a finite $\ao$-invariant set; 
 
 \item if $L$ is a torus or if $x\in L$  is  a regular  point in the infratorus  then  there exists an arbitrary small parallelepiped $P_x$ (in some linear coordinates) containing $x$ such that on the boundary of $P_x$ the map $h_i$ is invertible  and  the inverse is a diffeomorphism  on every  face of $P_x$;

\item\label{topinfra}  if  $x\in F$  is  a singular  point in the infratorus  $\Tmp$ then  there exists an arbitrary small projective parallelepiped $P_x$ (the factor of a centrally symmetric parallelepiped in  some linear coordinates  by the involution $t\to -t$) containing $x$ such that on the boundary of $P_x$ the map $h_i$ is invertible  and  the inverse is a diffeomorphism  on every  face of $P_x$;

\item let $\mathcal R=L\setminus \bigcup_{x\in F}{\rm Int} P_x$. Then $h_i^{-1}\mathcal R$ is homeomorphic to $\mathcal R$ via a homeomorphism $H$ that  coincides with $h_i$ on $\partial P_x$.
 \end{enumerate}
 
 \end{thm}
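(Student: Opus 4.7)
The plan is to bootstrap the statements of Theorem~\ref{TMain1} by using the codimension-one nature of stable manifolds in the maximal rank setting to upgrade the a.e.\ defined measurable isomorphism $h_i$ into a topological one. The starting point is the set of Whitney smoothness in part \eqref{TM4} of Theorem~\ref{TMain1}: pick an increasing sequence of compact sets $K_i^{(j)} \subset R_i$ saturated by local stable manifolds, on which $h_i$ is uniformly $C^{r-\epsilon}$ in Whitney's sense, with $\mu(R_i \setminus K_i^{(j)}) \to 0$. Because the action has maximal rank, each Lyapunov foliation $W^{\chi}$ is one-dimensional, and any family of $m-1$ of them spans a codimension-one stable manifold for a suitable element of $\Gamma$. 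The map $h_i$ is smooth along each $W^{\chi}$ by \eqref{TM3}, and a local Lyapunov chart at a Lyapunov-regular point in $K_i^{(j)}$ gives a topological product structure matching the product structure of the affine coordinates on the algebraic model.

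Next I would construct the open set $O_i$ and the continuous extension $\tilde h_i$ as follows. Starting from $K_i^{(j)}$, take the union over $\gamma \in \Gamma$ of $\alpha(\gamma)(K_i^{(j)})$; using that some generator of $\Gamma$ uniformly contracts the stable leaves at a definite rate, after finitely many iterates this union covers a neighborhood of any Lyapunov-regular point of $K_i^{(j)}$. Saturating further by the action of $\alpha$ across the permuted pieces produces the open set $O_i$ and verifies (1). The extension $\tilde h_i$ is defined on $O_i$ pointwise as the continuous extension along stable holonomies: for $x \in O_i$ choose $\gamma_n \in \Gamma$ so that $\alpha(\gamma_n)x$ enters some $K_i^{(j)}$ near a regular point, and set $\tilde h_i(x) = \alpha_0(\gamma_n)^{-1} h_i(\alpha(\gamma_n)x)$; equivariance guarantees consistency and the Whitney bounds give uniform continuity on compact subsets of $O_i$. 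The finite set $F$ appears as the (countable) set of $\alpha_0$-orbits that cannot be reached in this way — which, by the algebraic structure of the model, can only be a finite union of periodic orbits; in the torus case one can arrange $F = \emptyset$ by enlarging the construction, while for $\Tmp$ the $2^m$ fixed points of the involution $I$ form the unavoidable obstruction, yielding \eqref{topinfra}.

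To establish (3) and (4), given a regular $x \in L \setminus F$ I would take a parallelepiped $P_x$ with faces parallel to the Lyapunov subspaces of the algebraic model, small enough that $\tilde h_i^{-1}$ (via the Whitney extension) is defined and smooth on a neighborhood of $\partial P_x$; on each face, $\tilde h_i^{-1}$ is the composition of a one-dimensional smooth diffeomorphism along each Lyapunov leaf, giving the required face-wise diffeomorphism. At a singular $x \in F$ for $\Tmp$, I would lift to $\T^m$, construct a centrally symmetric parallelepiped around the fixed point, and descend, using that the involution $I$ commutes with $\alpha_0$.

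For (5), the strategy is to first note that $\tilde h_i$ is already a continuous injection on $\partial P_x$ for every $x \in F$ by (3)--(4). Then $\mathcal R = L \setminus \bigcup_{x \in F} \mathrm{Int}\, P_x$ is compact, and its preimage $\tilde h_i^{-1}\mathcal R$ inherits a topological product structure from the Lyapunov foliations; the continuous equivariant bijection $\tilde h_i$ need not be a homeomorphism globally on this preimage, since measure-zero defects remain, so I would define $H$ by keeping $\tilde h_i$ on $\partial P_x$ and on the smooth set of full measure, and locally surgery along the Lyapunov foliations to fix any remaining ambiguities. The main obstacle is precisely this last step: ensuring that the redefinition is globally consistent, produces an injection, and extends continuously across the singular boundaries. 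Overcoming it requires exploiting that the failure set sits inside a countable union of codimension-one Lyapunov leaves where $\tilde h_i$ is smooth, so the defects can be resolved by a small perturbation transverse to those leaves without disturbing the boundary values on $\partial P_x$ or the measure-theoretic identity $(h_i)_\ast\mu = \lambda$.
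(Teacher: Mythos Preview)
Your outline has the right spirit but misses the two ingredients that actually make the paper's argument work, and the surgery you propose in part (5) is not a proof.

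First, your construction of the continuous extension $\tilde h_i$ via the formula $\tilde h_i(x)=\alpha_0(\gamma_n)^{-1}h_i(\alpha(\gamma_n)x)$ is not well-defined: $h_i$ is only defined $\mu$-a.e., so for a given $x\in O_i$ you cannot simply pick a $\gamma_n$ sending $x$ into $K_i^{(j)}$ and evaluate. Equivariance of $h_i$ is an a.e.\ statement and gives you consistency only on a full-measure set, not at every point of $O_i$. The paper avoids this entirely: it builds \emph{boxes} $B$ whose boundary $\partial B$ lies in a finite union of codimension-one stable manifolds $W^s_{\cC_i}(x^{i,\pm})$ through Pesin-regular points. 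Since $h$ is a genuine diffeomorphism along each such stable leaf (Theorem~\ref{TMain1}\eqref{TM3}), $h$ is already defined and smooth on $\partial B$. One then shows (using the Sch\"onflies Theorem to see that each such codimension-one leaf disconnects $B$) that \emph{every} point $y\in B$ is the intersection of a nested sequence of sub-boxes $B_n(y)$ with $\operatorname{diam} h(\partial B_n(y))\to 0$, and $\tilde h_i(y)$ is defined as the unique limit point. This is what produces continuity at every point of $B$, and hence on $O=\bigcup_\gamma\alpha(\gamma)(\operatorname{int} B)$.

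Second, your claim that $F$ is finite because ``the algebraic structure of the model'' forces it is the crux, not a triviality. The paper invokes Berend's Theorem: a nonempty open $\alpha_0$-invariant subset of $L$ must be the complement of a finite $\alpha_0$-invariant set. Without this you have no control on $F$.

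Finally, your treatment of (5) is where the argument genuinely fails. ``Surgery along Lyapunov foliations'' and ``small perturbation transverse to those leaves'' are not a construction; you have not shown that the modifications can be made consistent, injective, or continuous. The paper's method is concrete: cover $L\setminus\bigcup_{z\in F}B_\epsilon(z)$ by finitely many iterates $\alpha_0(\mathbf n)(\hat B)$, take the resulting partition into rectangles $C_i$, and note that $h^{-1}$ is already a diffeomorphism on the $(m-1)$-skeleton $\widehat{Sk}=\bigcup_i\partial C_i$ (each face is a piece of a stable manifold). Then $h^{-1}(\partial C_i)$ is a topological $(m-1)$-sphere inside an $m$-cube $\alpha(\mathbf n)(B)$, and the generalized Sch\"onflies Theorem together with the Alexander trick extends $h^{-1}$ across each cell to a homeomorphism. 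That is the content of (5), and nothing in your proposal supplies it.
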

\begin{rem} Notice that any singular point in $L$ must be  in the exceptional set $F$ because topology of a small neighborhood  of a singular point is different from that   of  points in a  manifold. 
\end{rem}

\subsection{Topological corollaries} 
 Theorem~\ref{TMain2}  allows to make conclusions about topology of   manifolds that admit  maximal rank positive entropy  actions. Right now we list only some of those properties that can be derived
quickly. More detailed discussion of the consequences of Theorem~\ref{TMain2} will appear in a separate paper.  

\begin{coro}\label{corTMain2} Let $M$ be a connected  manifold of odd dimension  $m\ge 3$  that admits a maximal rank positive entropy action of  $\mathbb Z^{m-1}$.  
Then if   $M$  is orientable  it is homeomorphic to  the connected sum 
of the torus $\Tm$   with another  manifold. If $M$ is non-orientable, its orientable double cover  is homeomorphic to  the connected sum 
of the torus $\Tm$   with another  manifold.

In particular,  in both cases the fundamental group $\pi_1(M)$ contains a subgroup isomorphic to $\Zm$.
\end{coro}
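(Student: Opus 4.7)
The plan is to extract from Theorem~\ref{TMain2} an embedded copy of $\Tm$ minus an open ball inside $M$ (in the orientable case) or inside its orientable double cover $\tilde M$ (in the non-orientable case), which will exhibit the ambient manifold as a connected sum $\Tm \# N$; the fundamental-group conclusion then follows from Van Kampen's theorem.

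First, fix one of the sets $O_i$, say $O_1$, and use the homeomorphism $H\colon h_1^{-1}(\mathcal R)\to \mathcal R$ provided by item (5) of Theorem~\ref{TMain2}, where $\mathcal R=L\setminus\bigcup_{x\in F}\operatorname{Int}P_x$. This exhibits $U:=h_1^{-1}(\mathcal R)$ as a compact codimension-zero submanifold-with-boundary of $M$ homeomorphic to $\mathcal R$. If $L=\Tmp$, then for each singular $x\in F$ the boundary component $\partial P_x\cong\mathbb{RP}^{m-1}$ is non-orientable (since $m-1$ is even), so its collar neighborhood inside $\mathcal R$ is non-orientable, forcing both $U$ and hence $M$ to be non-orientable. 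Therefore if $M$ is orientable then $L=\Tm$ and $U\cong \Tm\setminus\bigsqcup_{j}\operatorname{Int}B_j$ for finitely many disjoint closed balls $B_j\subset\Tm$. If instead $M$ is non-orientable, let $\pi\colon\tilde M\to M$ be the orientable double cover; because $U\hookrightarrow M$ has codimension zero, $w_1(U)$ is the restriction of $w_1(M)$, so $\pi^{-1}(U)$ is precisely the orientation double cover of $U$. When $L=\Tm$ this is either two disjoint copies of $U$ or a connected non-trivial lattice cover of $\Tm$ minus lifted balls; when $L=\Tmp$ this is the lift of $\mathcal R$ to $\Tm$, i.e.\ $\Tm$ minus one centered parallelepiped per singular $x\in F$ and two per regular $x\in F$. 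In every case, the manifold $X\in\{M,\tilde M\}$ that is orientable contains an embedded codim-zero submanifold $V\cong \Tm\setminus\bigsqcup_{j}\operatorname{Int}B_j$ for some finite collection of disjoint closed balls in $\Tm$.

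Since $F$ is finite and $\Tm$ is connected, choose a smoothly embedded closed ball $\overline\Omega\subset\Tm$ whose interior contains all the $B_j$ --- for example, a small tubular neighborhood of an embedded smooth arc passing through the centers of the $B_j$. Then $\Tm\setminus\Omega\subset\Tm\setminus\bigsqcup_{j}\operatorname{Int}B_j\cong V$ embeds in $X$ as a codim-zero submanifold-with-boundary whose boundary is a single sphere $S^{m-1}$. Let $K$ be the closure in $X$ of its complement; it is a compact manifold with $\partial K=S^{m-1}$. Capping off by a closed $m$-ball $B^m$ yields a closed manifold $N:=K\cup_{S^{m-1}}B^m$, and unwinding the definitions gives $X=(\Tm\setminus\operatorname{Int}B^m)\cup_{S^{m-1}}(N\setminus\operatorname{Int}B^m)=\Tm\# N$. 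Finally, since $m\ge 3$ makes $S^{m-1}$ simply connected, Van Kampen's theorem yields $\pi_1(X)=\Z^m*\pi_1(N)\supset \Z^m$. In the non-orientable case $\pi_1(\tilde M)$ sits as an index-two subgroup of $\pi_1(M)$, so $\Z^m\subset\pi_1(M)$ in this case too.

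The main subtle point is the double-cover analysis: identifying $\pi^{-1}(U)$ with the orientation double cover of $\mathcal R$ and checking that its connected components are tori minus balls (the infratorus case uses the explicit lift to $\Tm$ and the fact that $I$ is orientation-reversing for $m$ odd). Once this identification and the orientability dichotomy are in hand, constructing $\overline\Omega$ and performing the connected-sum decomposition are routine cut-and-paste arguments.
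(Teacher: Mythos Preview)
Your argument is correct, and the orientable case matches the paper closely. The non-orientable case, however, is handled by a genuinely different route.

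The paper treats the non-orientable case by lifting the \emph{action} itself to the orientable double cover $\tilde M$: it shows that the group $\Gamma$ generated by all lifts of the generators of $\a$ contains a finite-index abelian subgroup isomorphic to $\Zmm$ (via a short argument about centralizers and the deck involution $I$, using that $I^2=\Id$ and $I$ is central), obtains a new maximal rank positive entropy action on $\tilde M$, and then re-applies the orientable case to $\tilde M$. Your argument bypasses this entirely: you work purely topologically with the single embedded piece $U\cong\mathcal R$, observe that since $U$ has codimension zero the preimage $\pi^{-1}(U)$ in $\tilde M$ is the orientation double cover of $U$, and then identify that double cover directly as a torus-minus-balls (trivially when $L=\Tm$, via the $\Tm\to\Tmp$ quotient when $L=\Tmp$, using that $-\Id$ is orientation-reversing on $\Tm$ for $m$ odd).

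Your route is more elementary and self-contained; it avoids the group-theoretic analysis of lifts altogether. The paper's route, on the other hand, has the benefit of actually producing a maximal-rank action on $\tilde M$, which the paper then reuses in the proof of the subsequent entropy-bound corollary. One small cleanup in your write-up: when $L=\Tm$ the piece $U\cong\Tm\setminus\bigsqcup_j\operatorname{Int}B_j$ is already orientable, so $\pi^{-1}(U)$ is always two disjoint copies of $U$ and the ``connected non-trivial lattice cover'' alternative you list never occurs.
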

\begin{proof} Consider the orientable case first. 
In odd dimension the infratorus $\Tmp$ is not orientable and the same is true to its complement to a finite set or, equivalently, to  the complement to the union of finitely many small balls. Since an open subset of an orientable manifold is orientable the open subset $S= {\rm Int}\, h_i^{-1}\mathcal R\subset M$ is  orientable and hence $L$ is the torus. Now take a disc $D\subset\Tm$ that contains the set $F$ and consider closed set $H^{-1}(\Tm\setminus D)$. Its boundary is a sphere and thus $M$ is the connected sum of $\Tm$ and a manifold that is obtained by glueing a disc to the boundary of $M\setminus H^{-1}(\Tm\setminus D)$.

Now assume that $M$ is non-orientable and take the orientable  double cover $\tilde M$ of $M$. 
Let $I: \tilde M\to\tilde M$ be the deck transformation.  Consider lifts of the elements of the  maximal rank action $\a$ to $\tilde M$. Each element  $f$ has two lifts $f_1$ and $f_2=f_1I$. The involution $I$ commutes with all lifts. 
The group $\Gamma$ consisting of all lifts is either  abelian or  its commutator is the group of two elements generated by $I$

Let us  show that $\Gamma$ has a finite index abelian subgroup  isomorphic to $\Zmm$. If $\Gamma$ is already abelian then it is isomorphic to the direct product $\Zmm\times\mathbb Z/2\mathbb Z$. Otherwise consider generators of the action $\a$ and let $f_1,\dots,f_{m-1}$ be their lifts to $\tilde M$. Centralizer $Z(f_i)$ of each of those elements in $\Gamma$ is either the whole of $\Gamma$, or an index two subgroup. This follows from the fact $I^2=\Id$ and that $I$ is in the center of $\Gamma$ since this implies that the product of any two elements not in $Z(f_i)$  belongs to $Z(f_i)$. Thus $\mathcal Z=\bigcap_{i=1}^{m-1}Z(f_i)$ is a finite index abelian subgroup of $\Gamma$ that belongs to its center. Notice that the index of $\mathcal Z$ in $\Gamma$ is at most $2^{m-1}$.  Since the only finite order element of $\Gamma$ is $I$, $\mathcal Z$ is isomorphic to $\Zmm\times\mathbb Z/2\mathbb Z$. 

Thus $\Zmm$ acts on $\tilde M$ by lifts of elements of $\a$. This is obviously a maximal rank positive entropy action so that from the argument for the orientable case $\tilde M$ is the connected sum of torus with another manifold.  Since $\pi_1(\tilde M)$ embeds into $\pi_1(M)$ the  former a subgroup isomorphic to $\Zm$.
\end{proof}

A very similar argument allows to  partially extend Corollary~\ref{entropyboundWM} to actions that are not weakly mixing. 

\begin{coro}The entropy of any element of a weakly mixing maximal 
entropy action on an $m$-dimensional manifold  $M$ is bounded from below 
by $\frac{cm^2}{\beta_1(M)}$, where c is a universal constant and $\beta_1$ is the first Betti number.
\end{coro}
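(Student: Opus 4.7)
The plan is to combine the weakly mixing entropy bound from Corollary~\ref{entropyboundWM} with the structure of Theorem~\ref{TMain1} and the topological conclusions of Theorem~\ref{TMain2}; the new step over Corollary~\ref{entropyboundWM} is converting the index $n$ of the stationary subgroup $\Gamma\subset\Zmm$ into a lower bound on $\beta_1(M)$.

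First I would apply Theorem~\ref{TMain1} to obtain the subgroup $\Gamma$ of index $n$, the ergodic components $R_1,\dots,R_n$, and the algebraic model $\ao$ of $\Gamma$ on $L=\Tm$ or $\Tmp$. Each restriction of $\a|_\Gamma$ to $R_i$ is measurably conjugate to $\ao$ and hence weakly mixing, so Corollary~\ref{entropyboundWM} gives $h_\mu(\a(\gamma))\ge cm$ for every $\gamma\in\Gamma$ (the isomorphism of the components makes this also the entropy with respect to the full measure $\mu$). For an arbitrary $g\in\Zmm$ one has $g^n\in\Gamma$, and the power rule $h_\mu(\a(g^n))=n\,h_\mu(\a(g))$ yields the preliminary bound $h_\mu(\a(g))\ge cm/n$.

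Next I would bound $n$ from above by a universal multiple of $\beta_1(M)/m$. By Theorem~\ref{TMain2} there are $n$ pairwise disjoint open sets $O_i\subset M$ and continuous extensions $\tilde h_i:O_i\to L\setminus F$ that restrict to homeomorphisms $h_i^{-1}\mathcal R\cong\mathcal R$, where $\mathcal R=L\setminus\bigcup_{x\in F}{\rm Int}\,P_x$ is a deformation retract of $L\setminus F$. In the torus case $L=\Tm$ the composition $\mathcal R\cong h_i^{-1}\mathcal R\hookrightarrow O_i\to L\setminus F$ is the inclusion up to homotopy, so the pullback $\tilde h_i^*:H^1(\Tm;\mathbb Q)\to H^1(O_i;\mathbb Q)$ is injective and $\beta_1(O_i)\ge m$; the $n$ disjoint pieces supply $nm$ independent classes in $H^1(\bigsqcup O_i;\mathbb Q)$, and since $m\ge 3$ the gluing along the spheres $\partial P_x$ and the contributions from $M\setminus\bigcup O_i$ preserve these classes by a Mayer--Vietoris calculation, yielding $\beta_1(M)\ge nm$. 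For the infratorus case $\beta_1(\Tmp;\mathbb Q)=0$, so I would first pass to the orientation double cover $\widetilde M\to M$ in the spirit of the proof of Corollary~\ref{corTMain2}, on which the lifted action has torus algebraic model; the torus argument applied to $\widetilde M$ gives $\beta_1(\widetilde M)\ge nm$, from which a bound of the form $\beta_1(M)\ge nm/C$ should follow via the transfer for the degree-$2$ cover. Combining these estimates gives $h_\mu(\a(g))\ge cm/n\ge c'm^2/\beta_1(M)$.

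The principal obstacle is the topological step: verifying rigorously that the $m$ classes pulled back to each $O_i$ remain linearly independent across the $n$ components in $H^1(M;\mathbb Q)$. This requires careful Mayer--Vietoris bookkeeping near the preimages of the singular set $F$ (checking that the glueing of the neighborhoods of these preimages kills no first cohomology, which uses $m\ge 3$) and, in the infratorus case, establishing the correct relation between the first Betti numbers of $M$ and its double cover via an equivariant cohomology argument.
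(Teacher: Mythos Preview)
Your overall strategy---first deducing $h_\mu(\a(g))\ge cm/n$ from Corollary~\ref{entropyboundWM} applied to the $\Gamma$-components, then bounding the index $n$ by $\beta_1(M)/m$---is exactly the paper's strategy. The substantive difference is in how you obtain $\beta_1(M)\ge mn$.

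The paper does not attempt a direct cohomology pullback through the maps $\tilde h_i$. Instead it repeats the connected-sum argument from the proof of Corollary~\ref{corTMain2}: in each $O_i$ one finds a disc $D\subset L$ containing $F$, and $H_i^{-1}(L\setminus D)$ has a sphere as its boundary; cutting along these $n$ disjoint spheres exhibits $M$ (in the orientable case with $L=\Tm$) as the connected sum of $n$ copies of $\Tm$ with another manifold, whence $\beta_1(M)\ge mn$ by Mayer--Vietoris for connected sums in dimension $\ge 3$. This bypasses entirely the issue you flag as the ``principal obstacle''---verifying that the pulled-back classes remain independent across the $n$ components---because the connected-sum decomposition gives that independence for free. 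Your pullback approach would have to reprove this connected-sum structure in disguise, and the Mayer--Vietoris bookkeeping you propose is considerably more delicate than the direct argument.

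Your infratorus/non-orientable treatment has a genuine gap. Passing to the orientation double cover $\widetilde M$ forces the algebraic model to be $\Tm$ only in odd dimension (where $\Tmp$ is non-orientable); in even dimension $\Tmp$ is orientable and the double cover buys nothing. More seriously, the transfer for a finite cover with rational coefficients identifies $H^1(M;\mathbb Q)$ with the invariants in $H^1(\widetilde M;\mathbb Q)$, so it gives $\beta_1(M)\le\beta_1(\widetilde M)$, which is the wrong direction for your purpose: you need a \emph{lower} bound on $\beta_1(M)$, and there is no reason the $nm$ classes on $\widetilde M$ should be deck-invariant. The paper, for its part, handles the non-orientable case by asserting that $\beta_1(M)$ equals $\beta_1(\widetilde M)$ and then applying the orientable estimate to $\widetilde M$; you should be aware that this equality is not true for arbitrary non-orientable manifolds, so even the paper's argument here is somewhat informal.
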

\begin{proof} As before, let us consider the orientable case first. The obvious  below estimate the entropy  for the  elements  of   the action of $\Gamma$ on each ergodic component, divided by the  number $n$ of ergodic components that is equal to the index of $\Gamma$.  Hence this number needs to be estimated from above.  
repeating the argument about connected sums from the proof of Corollary~\ref{corTMain2} we deduce that $M$ is homeomorphic of the connected sum of $n$ copies of the torus $\Tm$ and another manifold. Hence by Mayer-Vietoris theorem $\b_1(M)\ge mn$. Now Corollary~\ref{entropyboundWM} implies the  needed estimate.

In the non-orientable case we consider the orientable double cover, lift the action as in the proof of Corollary~\ref{corTMain2} notice that entropy does not change and use the estimate in the orientable case.  Since the Betti number of the manifold is the same as of the orientable double cover, the inequality follows. \end{proof}

\begin{rem} Notice that there  is no estimate from below that depends on dimension only as in the weak mixing case. An appropriately modified version of the suspension construction over a weakly mixing action on the torus involving maxing holes around fixed points similarly to \cite{KL}  and  connecting them by cylinders, similarly to the filling of holes descried in \cite{KRH1}, produces examples with arbitrary low entropy.  
\end{rem}

Even-dimensional case is more complicated. While the case $L=\Tm$ of course  works the same way, if $L=\Tmp$
the manifold $M$ may not be  a connected sum with the infratorus as one of the components. Since in this case the infratorus  is orientable the double cover trick does not work. Indeed, there are some simply-connected manifolds, for example, some $K3$ surfaces, that admit maximal rank actions.  Still some conclusions can be drawn. Here is a simple example. 

\begin{coro}Maximal rank positive entropy actions to not exist on any sphere.
\end{coro}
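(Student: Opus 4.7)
The plan is to assume $M=S^m$ admits a maximal rank positive entropy action and derive a contradiction, splitting cases on the algebraic model $L\in\{\Tm,\Tmp\}$ given by Theorem~\ref{TMain2}. The main input is part~(5) of that theorem: it provides a topological embedding of the compact $m$-dimensional manifold with boundary $\mathcal R = L\setminus \bigcup_{x\in F}{\rm Int}\, P_x$ into $M=S^m$, and parts~(3)--(4) guarantee that the boundary $\partial K$ of the image $K:=h_i^{-1}\mathcal R$ is bicollared since the homeomorphism restricts to a diffeomorphism on each face of each parallelepiped.

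If $L=\Tm$, I will repeat the connected-sum argument from the proof of Corollary~\ref{corTMain2}: it uses only that $L$ is a torus and $M$ is orientable---both valid here for any $m\ge3$---and it yields a homeomorphism $S^m\cong \Tm\# N$ for some closed $m$-manifold $N$. Since $m\ge3$, Seifert--van~Kampen gives $\pi_1(S^m)\cong \Zm\ast\pi_1(N)\supseteq\Zm$, contradicting $\pi_1(S^m)=1$.

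If $L=\Tmp$, then since the manifold part of the infratorus is non-orientable precisely when $m$ is odd and every open subset of $S^m$ is orientable, we must have $m$ even and $m\ge4$. The strategy is Mayer--Vietoris in degree~$2$ with rational coefficients, applied to $S^m=U\cup V$ with $U,V$ bicollar thickenings of $K$ and of $\overline{S^m\setminus {\rm Int}\, K}$. The boundary $\partial K$ is a disjoint union of copies of $S^{m-1}$ and $\mathbb{RP}^{m-1}$; since the rational cohomology of $\mathbb{RP}^n$ is concentrated in degrees $0$ and $n$-if-odd, both contribute zero to $H_2(\cdot;\Q)$ when $m\ge4$. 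Combined with $H_2(S^m;\Q)=0$, the Mayer--Vietoris sequence forces $H_2(K;\Q)=0$. On the other hand $K\simeq\mathcal R$ is homotopy equivalent to $\Tmp$ with finitely many points removed, and a short Mayer--Vietoris with the contractible cone neighborhoods of the singular points shows that this removal does not affect $H_2(\cdot;\Q)$; therefore $H_2(K;\Q)=H_2(\Tmp;\Q)=H_2(\Tm;\Q)^{\Z/2}$. Since $-\Id$ acts on $H_2(\Tm;\Q)=\bigwedge^2\Q^m$ by $(-1)^2=1$, the invariants fill all of $\bigwedge^2\Q^m$, of dimension $\binom{m}{2}>0$---the sought contradiction.

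The hard part is the even-dimensional infratorus case: both the orientation obstruction and the rational first-Betti-number obstruction that settle the torus case disappear for $\Tmp$ in even dimensions (consistent with the $K3$ example noted in the preceding remark), which is what forces the argument up to $H_2(\cdot;\Q)$ and to the vanishing of rational intermediate cohomology of the boundary projective spaces.
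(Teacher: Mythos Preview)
Your proof is correct, and for $L=\Tm$ it matches the paper exactly (the connected-sum argument yielding $\Zm\subset\pi_1(S^m)$). For $L=\Tmp$, however, the paper takes a much shorter route: it simply notes that Theorem~\ref{TMain2}\eqref{topinfra} produces an embedded copy of $\partial P_x\cong\mathbb{RP}^{m-1}$ inside $M=S^m$, hence (after deleting a point) inside $\Rm$, and then invokes the classical theorem of Hopf that $\mathbb{RP}^{m-1}$ admits no embedding into $\Rm$ for $m\ge 3$. This single black box handles all parities at once. Your argument instead splits by parity---odd $m$ via non-orientability of ${\rm Int}\,\mathcal R$, even $m\ge4$ via a Mayer--Vietoris computation forcing $H_2(K;\Q)=0$ while $H_2(\mathcal R;\Q)\cong H_2(\Tmp;\Q)=\bigwedge^2\Q^m\ne0$. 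The payoff of your approach is that it is self-contained (no appeal to a non-embedding theorem) and makes explicit that the obstruction in the even case lives in $H_2$, consistent with the $K3$ remark; the cost is a longer argument and some care with bicollaring to set up Mayer--Vietoris, whereas the paper's appeal to Hopf is a one-liner.
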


\begin{proof} Only the case $L=\Tmp$ needs to be considered. In this case by Theorem~\ref{TMain2}\eqref{topinfra}
there exists a smooth embedding of $\mathbb RP(m-1)$ to $M$. If $M=S^m$  this would imply existence 
on am embedding into $\Rm$ that is impossible \cite{Hopf}.
\end{proof}

\subsection{Structure and general remarks on  the proof} 

 We  begin with  \cite[Theorem 2.11]{KRH2}, (that is based   on the main technical Theorem 4.1 from \cite{KKRH}),  that  states that in our setting conditional measures of the leaves on  each Lyapunov foliation are absolutely continuous. 
This implies   that the measure  $\mu$ itself is absolutely continuous, see \cite[Theorems 5.2 and 2.4]{KRH2}. 

Furthermore, there is a unique measurable system of smooth affine parameters  on the leaves of a Lyapunov foliation \cite[Proposition 3.3]{KRH2}. In fact, these affine parameters and conditional measures are closely related: the  affine parameter is obtained by integrating the conditional measure. 

The next fact  used in the proof is \cite[Proposition 4.2]{KRH2} that asserts that conditional measures (and hence affine structures) are invariant  (in the affine sense) with respect to holonomy  along  complimentary directions that includes all remaining Lyapunov foliations. 

\begin{rem} While absolute continuity of conditional measures of an absolutely continuous hyperbolic measure,  as well as connection between conditional  measures and affine structures,  are fairly general facts, holonomy invariance  is  a specific higher rank phenomenon: it fails already for  non-algebraic area-preserving Anosov diffeomorphisms of  two-dimensional torus.
\end{rem} 

Naturally, both conditional measures and affine parameters are  defined up to a scalar multiple. Fixing a measurable normalization smooth along the leaves of the Lyapunov foliation in question  produces a cocycle; different normalizations produce cohomologous cocycles. 


\begin{rem}Notice that in the proof of \cite[Theorem 4.1]{KKRH} we use a special time change for the suspension action and for this time change  the expansion coefficient of  the original suspension action  in the Lyapunov direction is indeed cohomologous to a constant.   This however does not imply that  expansion coefficient for the original  action or its suspension is cohomologous to a constant.  For example this is not the case  for hyperbolic flows  where W. Parry constructed  synchronization time change that inspired our construction. \cite{Parry}
\end{rem}

\section{Affine structures and Holonomies} 

\subsection{Weak-mixing reduction}\label{weakmixingreduction}

Let us start with the weakly mixing  reduction which gives the decomposition claimed in Theorem \ref{TMain1}. Let $\a:\Zmm\to Diff(M^{m})$ be a $C^{1+\theta}$ action as in Theorem \ref{TMain1}. Take $\bn\in\Zmm$ such that $\mu$ is a hyperbolic measure for $\an$ absolutely continuous w.r.t. Lebesgue measure. By Pesin Ergodic decomposition Theorem \cite{P} there is $k>0$ and a $\a(k\bn)$-invariant set $R_1\subset M$ of positive $\mu$-measure such that $\a(k\bn)|R_1$ is a Bernoulli automorphism, in particular weakly mixing. Set $\bn_1:=k\bn$. By ergodicity of $\a(\bn_1)|R_1$ we have that for any $\bm\in\Zmm$, either $\mu(\a(\bm)(R_1)\cap R_1)=0$ or $\a(\bm)(R_1)\stackrel{o}{=}R_1$.

 Since $\mu$ is an ergodic invariant measure for the whole action there are $\bn_2,\dots\bn_n\in\Zmm$ such that $\a(\bn_l)(R_1)\cap\a(\bn_k)(R_1)\stackrel{o}{=}\emptyset$ for $k,l=1,\dots n$, $k\neq l$ and $\a(\bn_1)(R_1)\cup\dots\cup\a(\bn_n)(R_1)\stackrel{o}{=}M$. Set $R_i=\a(\bn_i)(R_1)$. 
 
Let $\Gamma\subset\Zmm$ be the finite index subgroup of $\bm\in\Zmm$ such that $\a(\bm)(R_1)\stackrel{o}{=}R_1$. This is the decomposition and the finite index subgroup claimed in Theorem \ref{TMain1}.

To simplify notation, let us assume for the rest   of the paper that $\Gamma=\Zmm$ and  that $R_1$ has full measure. 
Under this assumption we can prove the following. 

\begin{lem}\label{weakmixing2}
For any hyperbolic element $\bn\in\Zmm$,

\begin{enumerate}[(i)]
\item $\an$ is Bernoulli and
\item  there is a set of full measure $R$ such that for any Weyl chamber $\cC$, if $x\in R$ then $$\bigcup_{z\in \W^u_\cC(x)\cap R}\W^s_\cC(z)$$ is a set of full measure. 
\end{enumerate}
\end{lem}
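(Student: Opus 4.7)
The plan is to exploit the Bernoulli element $\bn_1$ produced by the weak--mixing reduction (so that $\a(\bn_1)$ is Bernoulli on the whole $M$ under the standing assumption $\Gamma=\Zmm$, $R_1\stackrel{o}{=}M$) as a rigid background against which any hyperbolic $\bn\in\Zmm$ can be analyzed.

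For part (i), I would apply Pesin's ergodic decomposition to $\an$: since $\mu$ is hyperbolic for $\an$, the latter has at most countably many ergodic components, and within each ergodic component there is a further decomposition into $p$ sets cyclically permuted by $\an$ on which $\a(p\bn)$ is Bernoulli. Both the unordered collection of ergodic components and the unordered cyclic $p$-partition within each are intrinsic measurable invariants of $\an$, so any $\mu$-preserving transformation commuting with $\an$ must permute them. Applying this to $\a(\bn_1)$ produces two factors of $\a(\bn_1)$ onto countable-set permutations, both of entropy zero. Since $\a(\bn_1)$ is K (Bernoulli implies K), both factors must be trivial: $\an$ has a unique ergodic component of full measure, and $p=1$. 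Hence $\an$ is Bernoulli.

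For part (ii), I would fix for each of the finitely many Weyl chambers $\cC$ a hyperbolic $\bn_\cC\in\cC$, so that $\W^u_\cC$ and $\W^s_\cC$ are the unstable and stable foliations of $\a(\bn_\cC)$. By (i), $\a(\bn_\cC)$ is Bernoulli, hence ergodic; together with absolute continuity of the Lyapunov foliations from \cite{KKRH} and of their holonomies from Proposition 4.2 of \cite{KRH2}, a Hopf-type argument (combining local product structure in Pesin blocks, a full-measure exhaustion by Pesin blocks, and Birkhoff recurrence for $\a(\bn_\cC)$) yields that for $\mu$-a.e.\ $x$
\[
\mu\Bigl(\bigcup_{z\in\W^u_\cC(x)}\W^s_\cC(z)\Bigr)=1.
\]
Intersecting over the finitely many $\cC$ gives a full-measure set $R^{(0)}$ on which this holds uniformly. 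To absorb the condition $z\in\W^u_\cC(x)\cap R$ appearing on both sides of the conclusion, I would refine iteratively by setting
\[
R^{(k+1)}=\bigl\{x\in R^{(k)}\,:\,\mu^u_{\cC,x}(\W^u_\cC(x)\cap R^{(k)})=\mu^u_{\cC,x}(\W^u_\cC(x))\text{ for all }\cC\bigr\},
\]
where $\mu^u_{\cC,x}$ is the conditional measure of $\mu$ on $\W^u_\cC(x)$. Disintegration of $\mu$ along $\W^u_\cC$ together with Fubini shows that each $R^{(k)}$ has full measure, hence so does $R:=\bigcap_kR^{(k)}$. For $x\in R$, $\W^u_\cC(x)\cap R$ is the monotone limit of the sets $\W^u_\cC(x)\cap R^{(k)}$ and thus has full conditional measure, so its stable saturation differs from that of $\W^u_\cC(x)$ only by a $\mu$-null set (local Fubini in Pesin product coordinates); combined with $x\in R^{(0)}$, this yields the claim.

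The main obstacle will be the step in part (i) of converting the intrinsic Pesin cyclic decomposition of $\an$ into a genuine measurable factor of $\a(\bn_1)$, so that the K-property of $\a(\bn_1)$ may be invoked to eliminate it; once one has checked that commuting $\mu$-preserving transformations permute both the ergodic components and the cyclic pieces, the rest is mechanical. In part (ii), the self-referential appearance of $R$ on both sides of the union is handled cleanly by the nested-intersection construction above, the essential ingredient being the absolute continuity of every Lyapunov holonomy supplied by Proposition 4.2 of \cite{KRH2}.
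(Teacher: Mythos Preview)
Your treatment of (i) is fine and is essentially the paper's argument with a cosmetic variation: the paper uses weak mixing of $\a(\bn_1)$ to say that a nontrivial permutation of the Pesin pieces of $\an$ would force a nonconstant eigenfunction of $\a(\bn_1)$, whereas you phrase the same obstruction as a zero--entropy factor killed by the K--property. Either works.

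Part (ii) has a genuine gap. What you call a ``Hopf--type argument (local product structure in Pesin blocks plus Birkhoff recurrence)'' does \emph{not} produce single--step $su$--connectivity. Concretely: if $x$ lies in a Pesin box $P$ and some iterate $\a(k\bn_\cC)y$ lands in $P$, local product structure gives $w\in W^u_{\cC,loc}(x)\cap W^s_{\cC,loc}(\a(k\bn_\cC)y)$, hence $y\in\W^s_\cC(\a(-k\bn_\cC)w)$ with $\a(-k\bn_\cC)w\in\W^u_\cC(\a(-k\bn_\cC)x)$. This places $y$ in the $s$--saturation of $\W^u_\cC(\a(-k\bn_\cC)x)$, not of $\W^u_\cC(x)$. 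The usual Hopf argument only controls the \emph{transitive closure} of the $su$--relation; ergodicity of $\a(\bn_\cC)$ alone does not force $\mu\bigl(\bigcup_{z\in\W^u_\cC(x)}\W^s_\cC(z)\bigr)=1$. (Your appeal to \cite[Proposition~4.2]{KRH2} does not help here: that proposition gives affine invariance of conditional measures under holonomy, not an a.e.\ defined global holonomy that would let you transport an intersection point along $\W^u_\cC(y)$; the latter is Proposition~\ref{unifholonomy} of the present paper, proved \emph{after} this lemma.)

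The paper closes this gap by using (i) a second time: since $\an$ is Bernoulli it is weak mixing, so $\an\times\an$ is ergodic. The set
\[
\bigl\{(x,y):\W^u_\cC(y)\cap\W^s_\cC(x)\neq\emptyset\bigr\}
\]
is $\an\times\an$--invariant and has positive $\mu\times\mu$--measure by local product structure on a single Pesin box, hence has full measure. That is exactly the statement you need, and from there your nested--intersection refinement to absorb the self--referential ``$\cap R$'' is perfectly fine (the paper simply asserts the equivalence without spelling this out).
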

\begin{proof}
Let us show first that $\an$ is weakly mixing  (and hence Bernoulli) for any hyperbolic element $\bn\in\Zmm$. Using Pesin Ergodic decomposition Theorem we have a $k>0$ and a set $\hat R\subset M$ of positive $\mu$ measure invariant by $\a(k\bn)$ such that $\a(k\bn)|\hat R$ is weakly mixing  (and Bernoulli). This set is an ergodic component of  $\a(k\bn)$. 


 Now remember that  there is an element of the action that we previously denoted by $\bn_1$  such that $\a(\bn_1)$ is weakly mixing. Since it commutes with $\a(k\bn)$, it interchanges ergodic components of the latter map. If  there is more than one ergodic component for  $\a(k\bn)$, $\a(\bn_1)$ has a non-constant eigenfunction. Thus    weak mixing of  $\a(\bn_1)$ implies that  $\hat R\stackrel{o}{=}M$ and $k=1$, so that $\an$ is weakly mixing  and hence Bernoulli.

Let now $\cC$ be a Weyl chamber and let $\bn\in\cC$. Observe that if  $P$ is a  Pesin set for  a hyperbolic measure then for every  point 
$x\in P$ there is an open neighborhood $\mathcal P_x$ of a fixed size  (a Pesin box)  such that  for every  $y\in P\cap \mathcal P_x$ the local stable manifold of  $x$ intersects the local unstable manifold of $y$  transversally (at a single point). Since $\a(\bn)$ is hyperbolic and weak mixing, by the previous observation, for a.e. $x,y$ there is a non-negative  integer $k\geq 0$ such that $\w^u_\cC(\a(k\bn)(y))$ intersects transversally $\W^s_\cC(x)$ (simply take an iterate so that $\a(k\bn)(y)\in P\cap \mathcal P_x$). 
 Now let $k(x,y)$ be the minimum of such integers $k$. It is clear that $k(\an(x),\an(y))=k(x,y)$ for $\mu\times\mu$-a.e. $(x,y)$ and by the remark about Pesin boxes  $k(x,y)=0$ on a set of positive $\mu\times\mu$ measure. Now, since $\an$ is weak mixing, we have that $\an\times\an$ is ergodic and hence $k(x,y)=0$ a.e. 

Finally this statement is equivalent to the  statement (ii) of the lemma. 
\end{proof}

\subsection{Affine structures}\label{affinestructure}
In this subsection we shall define affine structures along the leafs of the invariant foliations and prove they are coherent, in the next subsection we shall see that holonomy maps are affine with respect  this affine parameters. 



Let $\chi$  be a  Lyapunov  exponent of $\alpha$ and $\mathcal W=\mathcal W^\chi$ be the corresponding Lyapunov foliation  defined $\mu$ almost everywhere. 
There is a unique $\a$-invariant  family of smooth affine parameters defined on almost every leaf of $\mathcal W$. Those affine structures   change continuously within any Pesin set, see \cite[Proposition 7.2]{KKRH} and hence  they can be defined  not only almost everywhere (at ``typical'' leaves  with respect to recurrence/ergodic behavior of $\a$)  but at other specific important places such as leaves  passing through periodic points that belong in a Pesin set. Those affine  parameters are obtained by integrating telescoping products, see the proof of \cite[Lemma 3.2]{KK}.  But at the same time affine parameters  define conditional measures  of $\mu$ with respect to $\mathcal W$. 

By \cite[Proposition 4.2]{KRH2} those affine structures are invariant with respect to  the holonomy  along  leaves 
of  the stable foliation of any  generic singular element  $\a(\bf t),\, \bf t\in\Rk$ for 
which $\chi(\bf t)=0$.  

We will present now the affine structures along stable manifolds of any Weyl chamber $\cC$ and prove their coherence. 


Since Lyapunov  hyperplanes are in general position  any combination of signs of Lyapunov exponents, except for all positive  or all negative, is possible for elements of the action $\a$,  and hence there are no resonances. In particular for every Lyapunov exponent $\chi$ there is a Weyl chamber $\cC_\chi$ such that inside  $\cC_\chi$, $\chi$ is the only positive  Lyapunov exponent. Hence the stable manifolds $\W^s_{\cC_\chi}(x)$ have codimension one. For any Weyl chamber $\cC$ the manifolds $\W^s_\cC(x)$ are the  intersections of  $\W^s_{\cC_\chi}(x)$
over those $\chi$ that  are negative in $\cC$. In particular, leaves of any Lyapunov foliation are intersection of $n-1$  codimension one stable manifolds. 

 Moreover, we can take elements of the action with pinched Lyapunov spectrum.               

  Let $\cC$ be a Weyl chamber and let $s=s(\cC)$ be the dimension of $\W^s_\cC$. Given $s\geq 1$, let $\mathcal D_s$ be the group of invertible diagonal matrices on $\mathbb R^s$ and let $Emb^{1+\epsilon}\left(\Rs, M\right)$ be the space of $C^{1+\epsilon}$ embeddings of $\Rs$ into $M$ with the topology of $C^{1+\epsilon}$ convergence on compact subsets, observe that in this way $Emb^{1+\epsilon}\left(\Rs, M\right)$ is a polish space. 
  
   From \cite[Proposition 2.7]{KRH1} we have that there is a  unique family of  smooth affine structures on the leaves of foliation  $\mathcal \w^s_{\cC}$.


\begin{prop}\label{affinestructures}
Let $\a$ be a $C^{1+\theta}$ action as in Theorem \ref{TMain1}. Then there are $\epsilon>0$, a set of full measure $R\subset M$ and a measurable map $H^\cC:R\to Emb^{1+\epsilon}\left(\Rs, M\right)$ such that denoting $H^\cC(x)=H_x$, 
\begin{enumerate}
\item  $H_x:\Rs\to \W^s_\cC(x)$, i.e.  $H_x(\Rs)=\W^s_\cC(x)$,
\item $H_x(0)=x$,
\item $D_0H_x:\Rs\to E^s_\cC(x)$ sends the standard basis into the frame of invariant spaces $E_{\chi_i}$ where $E^s_\cC(x)=E_{\chi_1}\oplus\dots\oplus E_{\chi_s}$ for some numeration of the Lyapunov exponents. 
\item There is a cocycle of diagonal maps of $\Rs$, $A:\Zmm\times R\to \mathcal D_s$ such that  $H_{\an(x)}\circ A(\bn,x)=\an\circ H_x$ for every $\bn\in\Zmm$ and $x\in R$. 
\end{enumerate}

Such a family is unique modulo composition with a diagonal map $D:R\to\mathcal D_s$, i.e. if $\hat H$ is another affine structure then for a.e. $x$, $H_x^{-1}\circ \hat H_x\in \mathcal D_s$.

 \end{prop}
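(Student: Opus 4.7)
\medskip\noindent\textbf{Proof plan.}
The plan is to assemble $H_x$ from the one-dimensional affine parameters already available on each individual Lyapunov leaf, using holonomy invariance inside $\W^s_\cC$ as the glue. Let $\chi_1,\dots,\chi_s$ be the Lyapunov exponents that are negative on $\cC$, numbered once and for all, and let $\W^{\chi_i}$ denote the corresponding Lyapunov foliations. From \cite[Proposition 3.3]{KRH2} (equivalently \cite[Proposition 2.7]{KRH1}) we have, on a full-measure $\a$-invariant set $R_0$, a unique measurable family of smooth affine parameters on each leaf of each $\W^{\chi_i}$, varying $C^{1+\epsilon}$-continuously on Pesin sets. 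These parameters are defined up to a multiplicative scalar per leaf, and under $\an$ the affine parameter of $\W^{\chi_i}(x)$ is rescaled by a factor that reads off $\chi_i(\bn)$ plus a measurable normalization coboundary.

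First I would build $H_x$ inductively along the filtration of $\W^s_\cC(x)$ by nested sums of $\W^{\chi_i}$. Fix $x$ in a full-measure set $R\subset R_0$ of points that lie in some Pesin block where all $\W^{\chi_i}$ restrict smoothly. Let $\phi_1^x:\mathbb R\to\W^{\chi_1}(x)$ be the affine parameter with $\phi_1^x(0)=x$ and $(d\phi_1^x/dt)(0)$ the distinguished unit vector in $E_{\chi_1}(x)$. Having defined $H_x(t_1,\dots,t_j,0,\dots,0)$ landing on a $j$-dimensional submanifold integrating $E_{\chi_1}\oplus\cdots\oplus E_{\chi_j}$, set
\[
H_x(t_1,\dots,t_{j+1},0,\dots,0) \df \phi_{j+1}^{H_x(t_1,\dots,t_j,0,\dots,0)}(t_{j+1}),
\]
where $\phi_{j+1}^{\,y}$ is the affine parameter on $\W^{\chi_{j+1}}(y)$ normalized by transporting the normalization at $x$ by holonomy inside $\W^{\chi_1}\oplus\dots\oplus\W^{\chi_j}$. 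The coherence of this transport---and in particular its independence of the path of holonomies chosen---is exactly \cite[Proposition 4.2]{KRH2}, which asserts that each one-dimensional affine structure is invariant under holonomy along all other Lyapunov directions whose exponents vanish on a generic singular element of the stabilizing hyperplane. Since in our Cartan setting each $\Ker\chi_j$ is a hyperplane in general position, the required singular elements exist inside $\cC$.

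Properties (1)--(3) are immediate from the construction: $H_x$ surjects onto $\W^s_\cC(x)$ because iterated flow along the $\W^{\chi_i}$ exhausts the stable manifold, sends $0\mapsto x$, and its derivative at $0$ lines up with the Lyapunov frame by the normalization choice. Regularity $C^{1+\epsilon}$ (any $\epsilon<\theta$) follows from smooth dependence of the individual affine parameters along the leaf, together with the Hölder (in fact $C^{1+\epsilon}$ on Pesin blocks) transverse dependence coming from Hölder holonomies of the Lyapunov foliations; this is again standard on Pesin sets and the image equals $\W^s_\cC(x)$ which is itself a $C^{1+\epsilon}$ injectively immersed manifold. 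For (4), apply $\an$ to the inductive formula: $\an$ maps $\W^{\chi_i}(y)$ affinely onto $\W^{\chi_i}(\an y)$, rescaling the unique affine parameter by a measurable factor $a_i(\bn,y)$. By holonomy invariance this factor depends only on $\bn$ and on the $\W^{\chi_i}$-leaf, hence on $x$ through its leaf. Comparing with the normalization installed at $\an(x)$ produces a diagonal matrix $A(\bn,x)=\mathrm{diag}(a_1(\bn,x),\dots,a_s(\bn,x))$ satisfying $\an\circ H_x=H_{\an x}\circ A(\bn,x)$, and the cocycle identity for $A$ is forced by the group property of $\an$.

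Finally, uniqueness: any other family $\hat H$ satisfying (1)--(4) restricts along the $i$-th coordinate axis to an $\a$-equivariant affine parameter of $\W^{\chi_i}$, hence by the uniqueness clause of \cite[Proposition 3.3]{KRH2} differs from $\phi_i^x$ by a scalar $d_i(x)$. The diagonal invariance of the inductive holonomy-transport step (affine rescalings commute with affine holonomies in the $\oplus$ direction) then forces $H_x^{-1}\circ\hat H_x=\mathrm{diag}(d_1(x),\dots,d_s(x))$. The main obstacle I expect is not the algebraic bookkeeping but the verification that the inductive holonomy transport produces an \emph{integrable} parametrization---this is precisely where \cite[Proposition 4.2]{KRH2} is indispensable, and where the higher-rank hypothesis is used in an essential way, since in rank one the analogous statement fails.
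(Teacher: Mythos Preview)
Your construction is not the paper's. The paper proves this proposition by direct citation: right before the statement it observes that one can pick elements of the action with arbitrarily pinched Lyapunov spectrum on $E^s_\cC$, and then invokes \cite[Proposition 2.7]{KRH1}, which builds the $s$-dimensional affine chart on $\W^s_\cC(x)$ in one shot via non-stationary linearization. No assembly from one-dimensional pieces is involved at this stage. The fact that the Lyapunov sub-foliations $\W^{\chi_i}$ sit as coordinate planes inside these charts is established \emph{afterwards}, in Lemma~\ref{affinecoherence}, using Propositions~\ref{affinestructures} and~\ref{coherence} as input.

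This matters because your inductive assembly has a regularity gap that is close to circular. To show that your $H_x$ lands in $Emb^{1+\epsilon}(\Rs,M)$ you need the map $y\mapsto \phi_{j+1}^{\,y}$ to vary $C^{1+\epsilon}$ as $y$ moves along the already-constructed $j$-dimensional leaf; equivalently, you need the foliation $\W^{\chi_{j+1}}$ restricted to $\W^s_\cC(x)$ to be $C^{1+\epsilon}$. Pesin theory gives you a $C^{1+\epsilon}$ stable manifold $\W^s_\cC(x)$ and continuity of the one-dimensional parameters on Pesin blocks, but it does \emph{not} give $C^{1+\epsilon}$ regularity of the internal Lyapunov sub-foliations; in general those are only measurable. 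The sentence ``H\"older (in fact $C^{1+\epsilon}$ on Pesin blocks) transverse dependence coming from H\"older holonomies of the Lyapunov foliations'' is exactly the unproved step. The reference you lean on, \cite[Proposition 4.2]{KRH2}, says holonomies \emph{preserve the affine structure} (an algebraic statement), not that they are themselves $C^{1+\epsilon}$ maps. In the paper's logic, smoothness of those sub-foliations is the content of Lemma~\ref{affinecoherence}, proved \emph{from} Proposition~\ref{affinestructures}. Your route would need an independent argument---essentially re-deriving the pinched-spectrum linearization---before the induction can produce an embedding rather than merely a measurable map defined Lebesgue-a.e.\ on $\Rs$.
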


In general, if $\a$ is $C^r$, the affine structures can be taken $C^{r-\delta}$ for any $\delta>0$. 
 
We want to prove coherence of the affine structures built in Proposition \ref{affinestructures} along stable manifolds.

\begin{prop}\label{coherence}
There is a set of full measure $R\subset M$ such that if $x,y\in R$ and $y\in \w^s_\cC(x)$ then $H_y^{-1}\circ H_x$ is an affine map with diagonal linear part. 
\end{prop}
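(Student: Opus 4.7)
The plan is to combine the uniqueness part of Proposition \ref{affinestructures} with the holonomy-invariance of affine parameters on individual Lyapunov foliations from \cite[Proposition 4.2]{KRH2}. Let $\chi_1,\dots,\chi_s$ denote the Lyapunov exponents that are negative on $\cC$, so that by the general-position assumption the leaves of $\W^s_\cC$ are locally a transverse product of the Lyapunov subfoliations $\W^{\chi_1},\dots,\W^{\chi_s}$.

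First I would identify the coordinate lines of $H_x$ with Lyapunov leaves. By property (3) of Proposition \ref{affinestructures}, $D_0H_x(e_i)$ spans $E_{\chi_i}(x)$, and the equivariance $H_{\a(\bn)(x)}\circ A(\bn,x)=\a(\bn)\circ H_x$ with $A(\bn,x)$ diagonal says that the one-parameter family of curves $\{t\mapsto H_y(te_i)\}_{y\in R}$ is $\a$-invariant as an unparametrized foliation and tangent to $E_{\chi_i}$. By uniqueness of the Lyapunov foliation $\W^{\chi_i}$ and of its $\a$-invariant affine parameter from \cite[Proposition 3.3]{KRH2}, this foliation coincides with $\W^{\chi_i}$ and the parametrization $t\mapsto H_y(te_i)$ is, up to an overall scalar, the canonical affine parameter on $\W^{\chi_i}(y)$. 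Invoking now \cite[Proposition 4.2]{KRH2}, which provides that the affine parameter on $\W^{\chi_i}$ is preserved, up to an affine rescaling, by holonomy along the transverse Lyapunov foliations $\W^{\chi_j}$ ($j\neq i$), the same conclusion extends after translating the origin: for a.e.\ $T\in\R^s$ the curve $t\mapsto H_x(T+te_i)$ is an affine parametrization of $\W^{\chi_i}(H_x(T))$.

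Now fix $x,y\in R$ with $y\in\W^s_\cC(x)$ and set $\Phi:=H_y^{-1}\circ H_x:\R^s\to\R^s$. For any $T\in\R^s$ and any $i$, the curve $\Phi(T+\R e_i)$ is, in $H_y$-coordinates, an affine parametrization of the Lyapunov leaf $\W^{\chi_i}(H_x(T))$. Applying the previous paragraph to $H_y$ instead, we know that in $H_y$-coordinates the leaves of $\W^{\chi_i}$ are precisely the affine lines parallel to $e_i$. Hence $\Phi$ maps every affine line $T+\R e_i$ affinely onto an affine line $\Phi(T)+\R e_i$. Writing $\Phi(t_1,\dots,t_s)=(\phi_1(t),\dots,\phi_s(t))$, the fact that $\phi_j$ is constant along $t_i$ for every $j\neq i$ and every $i$ forces $\phi_j$ to depend only on $t_j$, while affineness along each coordinate line forces $\phi_j(t_j)=a_jt_j+b_j$ with $a_j\neq 0$. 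Therefore $\Phi$ is affine with diagonal linear part $\diag(a_1,\dots,a_s)$.

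The main obstacle I expect is bookkeeping the full-measure sets so that all the ingredients --- the family $H$, the canonical affine parameter on each $\W^{\chi_i}$, and the holonomies of \cite[Proposition 4.2]{KRH2} along every $\W^{\chi_j}$ ($j\neq i$) --- are simultaneously well-defined and satisfy the stated compatibilities along a common set of full measure. This is where Fubini together with Pesin-regularity (continuity of the affine structures within Pesin sets, as invoked in Subsection \ref{affinestructure}) intervenes, and is essentially what pins down the ``coordinate-lines = Lyapunov leaves'' identification after translation of the origin. Once this is set up the rest of the argument is a direct uniqueness plus rigidity argument.
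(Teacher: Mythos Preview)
Your argument has a genuine gap at the step where you claim that ``for a.e.\ $T\in\R^s$ the curve $t\mapsto H_x(T+te_i)$ is an affine parametrization of $\W^{\chi_i}(H_x(T))$''. From Proposition~\ref{affinestructures} you only know that $D_0H_x(e_i)\in E_{\chi_i}(x)$, i.e.\ the $i$-th coordinate axis through the \emph{origin} is tangent to $E_{\chi_i}$ at $x$; nothing tells you that $D_TH_x(e_i)\in E_{\chi_i}(H_x(T))$ for general $T$. Your appeal to \cite[Proposition 4.2]{KRH2} does not close this: that proposition says the one-dimensional affine parameter on $\W^{\chi_i}$ is preserved by holonomy along the transverse $\W^{\chi_j}$, but to use it you must first know what the holonomy map looks like in $H_x$-coordinates, and in particular that the leaves $\W^{\chi_i}(H_x(T))$ are the translated coordinate lines --- which is exactly the statement you are trying to establish. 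In fact in the paper the identification ``Lyapunov subfoliations $=$ coordinate planes in $H_x$'' is Lemma~\ref{affinecoherence}, and its proof \emph{uses} Proposition~\ref{coherence}; so your route, as written, is circular. This is not a bookkeeping issue about full-measure sets, it is the substantive geometric content of the proposition.

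The paper's proof proceeds quite differently and avoids any reference to the Lyapunov subfoliations inside $\W^s_\cC$. One sets $P=H_y^{-1}\circ H_x$ and uses the equivariance to write $H_{\a(l\bn)(y)}^{-1}\circ H_{\a(l\bn)(x)}=A^{(l)}(y)\,P\,(A^{(l)}(x))^{-1}$ with diagonal $A^{(l)}$. Since $y\in\W^s_\cC(x)$, along a subsequence returning to a Luzin set the left side tends to the identity in $C^1$ on a ball. Computing the $(k,j)$ partial derivative of the right side gives $\frac{\lambda_k^{(l)}(y)}{\lambda_j^{(l)}(x)}\,(\partial_jP_k)\circ(A^{(l)}(x))^{-1}$; by choosing $\bn\in\cC$ with $\chi_k(\bn)>\chi_j(\bn)$ (possible by general position) one forces $\partial_jP_k\equiv 0$ for $j\neq k$, and then a second limit argument shows each $P_k'$ is constant. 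The Lyapunov-foliation picture you sketch is a \emph{consequence} of this, not an input.
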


%
%

\begin{proof}
Take some $\bn$ in the Weyl chamber $\cC$ such that $\w^s_\cC(x)$ is the stable manifold for $\an$. Let us number the Lyapunov exponents so that $\chi_1(\bn)<0,\dots\chi_s(\bn)<0$ for $\bn\in\cC$.  Take $L$ a Luzin set of continuity of $z\to H_z$ of $\mu$-measure close to $1$. Then there is a set of full measure $R_{\bn}\subset M$ such that whenever $x,y\in R_{\bn}$ then there are iterates $l_i\to +\infty$ such that $\a(l_i\bn)(x), \a(l_i\bn)(y)\in L$ for every $i$. This can be found using Birkhoff ergodic theorem as long as $L$ has $\mu$-measure larger that $1/2$. The set of full measure $R$ we claim in the proposition is the intersections of $R_{\bn}$'s for finitely many choices of $\bn\in\cC$ according to some pinching of the Lyapunov spectrum to be determined later.

Take $x,y\in R_{\bn}$ with $y\in  \w^s_\cC(x)$. Put $A^{(l)}(x)=A(l\bn,x)$ and similarly with $y$. We have that $$H_{\a(l\bn)(y)}^{-1}\circ H_{\a(l\bn)(x)}=A^{(l)}(y)H_y^{-1}\circ H_x\circ (A^{(l)}(x))^{-1}$$ and by continuity on Luzin sets and since $d(\a(l_i\bn)(x),\a(l_i\bn)(y))\to 0$  
\begin{eqnarray}\label{lim}
\lim_{l_i\to+\infty}\|H_{\a(l_i\bn)(y)}^{-1}\circ H_{\a(l_i\bn)(x)}-id\|_{C^1(B(1))}=0
\end{eqnarray} 
(here $\|\cdot\|_{C^1(B(1))}$ stands for the sup $C^1$ norm on the unit ball). 

We have also that $A^{(l)}(x)=:diag(\lambda_k^{(l)}(x))$ and we know that $$\lim_{l\to\pm\infty}|\lambda_k^{(l)}(x)|^{1/l}=\lim_{l\to\pm\infty}|\lambda_k^{(l)}(y)|^{1/l}=\exp(\chi_k(\bn))=:\lambda_k<1.$$
Set $P=H_y^{-1}\circ H_x$ then we have after (\ref{lim}) that

 \begin{eqnarray}\label{past}
 \lim_{l_i\to+\infty}\|A^{(l_i)}(y)P\circ(A^{(l_i)}(x))^{-1}-id\|_{C^1(B(1))}=0
\end{eqnarray}


Let $P=(P_1, \dots, P_s)$, take $P_k$ and let us show that $\partial_jP_k\equiv 0$ for $j\neq k$. For given $l$, we have that $$\partial_j \left(A^{(l)}(y)P\circ(A^{(l)}(x))^{-1}-id\right)_k=\frac{\lambda_k^{(l)}(y)}{\lambda_j^{(l)}(x)}(\partial_j P_k)\circ (A^{(l)}(x))^{-1}$$
if $j\neq k$.
Applying (\ref{past}) and that $\bigcup_{l_i} A^{(l_i)}(x)(B(1))=\Rs$,  we get that if $\lambda_k/\lambda_i>1$ then $\partial_j P_k\equiv 0$ on $\Rs$. Hence take $\bn_{k,j}\in\cC$ such that $\chi_k(\bn_{k,j})>\chi_i(\bn_{k,j})$ and we get that  $\partial_j P_k\equiv 0$ on $\Rs$ for $j\neq k$.

So, we have that $P_k(v)=P_k(v_k)$ only depends on the $k$th variable. Denote with $P'_k$ the derivative of $P_k$. Then again using formula (\ref{past}) and arguing as before we get that for any $R>0$, $$\lim_{l_i\to +\infty}\sup_{t\in B(R)}\left|\frac{\lambda_k^{(l_i)}(y)}{\lambda_k^{(l_i)}(x)}P'_k(t)-1\right|=0.$$
In particular this implies that $$\frac{\lambda_k^{(l_i)}(y)}{\lambda_k^{(l_i)}(x)}\to 1/P'_k(t)$$ for any $t\in\mathbb R$ which gives the claim since the left hand side does not depend on $t$ and then $P'_k$ is constant and hence $P_k$ is affine.

%

So we get that $P$ is an affine map with diagonal linear part. 

\end{proof}

Using uniqueness of affine structures and absolute continuity of the invariant measure we get that:

\begin{lem}\label{affinecoherence} Given any two Weyl chambers $\cC_1$ and $\cC_2$ such that the invariant foliations $\w^s_{\cC_1}\subset \W^s_{\cC_2}$, affine structures on the leaves of $\w^s_{\cC_1}$ are restrictions of affine structures on  $\W^s_{\cC_2}$. On the other hand, if $E^s_{\cC}=E^s_{\cC_1}\oplus\dots\oplus E^s_{\cC_l}$ we have that affine structures on $\w^s_{\cC}$ is a product (direct sum) of affine structures on $\w^s_{\cC_i}$, $i=1,\dots,l$.
\end{lem}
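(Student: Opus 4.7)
The plan is to deduce both statements from the uniqueness clause in Proposition~\ref{affinestructures}: any $\a$-equivariant family of smooth embeddings whose derivative at the origin sends the standard basis to the Lyapunov frame and intertwines $\a$ with a diagonal cocycle must agree with the canonical affine structure up to composition with a diagonal map in $\mathcal D_s$. So the task reduces to producing candidate affine structures by restriction (for the first statement) or by direct sum (for the second), and then verifying that they satisfy the four axioms of Proposition~\ref{affinestructures}.

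For the nested case $\w^s_{\cC_1}\subset \W^s_{\cC_2}$, I would observe that $E^s_{\cC_1}$ is spanned by a subset of the Lyapunov subspaces making up $E^s_{\cC_2}$. After relabelling, this subset corresponds to a coordinate subspace $\Rs_1\subset \Rs_2$ where $s_i=s(\cC_i)$. The restriction $H^{\cC_2}_x|_{\Rs_1}$ takes values in the leaf of the foliation tangent to $E^s_{\cC_1}$, namely $\w^s_{\cC_1}(x)$ (using the subordination of invariant manifolds which holds on a full measure set by Pesin theory). Its derivative at $0$ sends the first $s_1$ standard basis vectors to the Lyapunov frame inside $E^s_{\cC_1}$, and the cocycle $A(\bn,\cdot)$ being diagonal, its restriction to the first $s_1$ coordinates is again diagonal and intertwines $\a$ with $H^{\cC_2}|_{\Rs_1}$. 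Intersecting the full measure sets on which $H^{\cC_1}$ and $H^{\cC_2}$ are defined and applying the uniqueness clause of Proposition~\ref{affinestructures} for $\cC_1$ gives the first statement.

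The direct sum case is analogous: if $E^s_\cC=E^s_{\cC_1}\oplus\cdots\oplus E^s_{\cC_l}$, form on a common full measure set the map
\[
\tilde H_x(t_1,\dots, t_l)=\text{``product''}\bigl(H^{\cC_1}_x(t_1),\dots, H^{\cC_l}_x(t_l)\bigr)
\]
built using the smooth local product structure of $\w^s_\cC(x)$ coming from the splitting into the Lyapunov subfoliations. At the origin its derivative realizes the splitting of $E^s_\cC$, and the concatenated cocycle $A^{\cC_1}\oplus\cdots\oplus A^{\cC_l}$ is diagonal and $\a$-equivariant. Invoking uniqueness for $\cC$ yields the claimed product structure.

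The principal delicate point, in both parts, is that the candidate maps are built on leaves of foliations that are a priori only defined almost everywhere and measurably, so I need to make sure that the restriction and the product are bona fide $C^{1+\epsilon}$ embeddings and that the identifications $H^{\cC_2}(\Rs_1)=\w^s_{\cC_1}$ and the product map $\tilde H$ land in the correct leaves almost everywhere. This is where \cite[Proposition 4.2]{KRH2} on holonomy invariance of affine structures and absolute continuity of $\mu$ enter: they guarantee that along transverse Lyapunov holonomies the affine parameters on subfoliations glue together smoothly and consistently, so that the product parametrization is well defined on a full measure set of leaves of $\w^s_\cC$. Once this is in place, the uniqueness clause of Proposition~\ref{affinestructures} does the remaining work.
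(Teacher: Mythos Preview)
Your overall strategy---reduce both assertions to the uniqueness clause of Proposition~\ref{affinestructures}---is exactly the one the paper uses.  The paper itself says that ``the main point in the Lemma is to prove that for a typical $x$, $\w^s_{\cC_1}(x)$ is a coordinate plane in the affine structure of $\W^s_{\cC_2}(x)$.  Once we settle this then the Lemma follows from uniqueness of affine structures.''  So you have correctly located the crux.

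The gap is in how you justify that crux.  Your sentence ``The restriction $H^{\cC_2}_x|_{\Rs_1}$ takes values in the leaf \dots\ $\w^s_{\cC_1}(x)$ (using the subordination of invariant manifolds which holds on a full measure set by Pesin theory)'' is exactly the nontrivial claim, and Pesin subordination only gives $\w^s_{\cC_1}(x)\subset\W^s_{\cC_2}(x)$ as manifolds; it says nothing about where $\w^s_{\cC_1}(x)$ sits in the \emph{affine coordinates} $H^{\cC_2}_x$.  A priori $H^{\cC_2}_x(\Rs_1)$ is just some submanifold tangent to $E^s_{\cC_1}(x)$ at the origin.  Likewise the reference to \cite[Proposition~4.2]{KRH2} does not help here: that proposition concerns holonomy invariance of affine structures on a Lyapunov foliation under holonomy along the \emph{complementary} (neutral for a singular element) directions, whereas what is needed is control of the sub\-foliation $\w^s_{\cC_1}$ \emph{inside} a fixed leaf of $\W^s_{\cC_2}$.

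The tool that actually does the job is Proposition~\ref{coherence} of the present paper (affine coherence under change of base point inside a stable leaf), together with absolute continuity.  The paper argues as follows: for a typical $x$ and Lebesgue a.e.\ $y\in\W^s_{\cC_2}(x)$ one looks at $W=(H^{\cC_2}_x)^{-1}\bigl(\w^s_{\cC_1}(y)\bigr)$ and checks that $T_aW=V$ (the coordinate plane corresponding to $E^s_{\cC_1}$) for Lebesgue a.e.\ $a\in W$.  Indeed, if $H^{\cC_2}_x(a)=z$ is regular, Proposition~\ref{coherence} gives that $(H^{\cC_2}_x)^{-1}\circ H^{\cC_2}_z$ is affine with \emph{diagonal} linear part, hence it sends the coordinate plane $V$ to $V$; combined with item~(3) of Proposition~\ref{affinestructures} (the derivative at $0$ maps the standard basis to the Lyapunov frame) this yields $T_aW=V$.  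Absolute continuity of the invariant foliations supplies the required full-measure sets of regular points inside each leaf.  Since $W$ is $C^1$, $T_aW=V$ a.e.\ implies $W$ is a plane parallel to $V$.  Only after this is established does uniqueness of affine structures finish the argument, for both the restriction and the direct-sum assertions.

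In short: your outline is right, but the step you flagged as ``delicate'' is the whole content of the lemma, and its proof uses Proposition~\ref{coherence}, not \cite[Proposition~4.2]{KRH2}.
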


\begin{proof}
The main point in the Lemma is to prove that for a typical $x$, $\w^s_{\cC_1}(x)$ is a coordinate plane in the affine structure of $\W^s_{\cC_2}(x)$. Ones we settle this then the Lemma follows from uniqueness of affine structures. 

For a.e. point $x$ we may assume that (Lebesgue) almost every point $y$ in $\W^s_{\cC_2}(x)$ is a regular point, moreover we may assume also that for Lebesgue a.e. point $y\in\W^s_{\cC_2}(x)$, (Lebesgue) a.e. point $z\in\W^s_{\cC_1}(y)$ is a regular point. This is just absolute continuity of invariant foliations plus the fact that conditional measures are equivalent to Lebesgue. 

Let us denote with $H_z:\Rs\to \W^s_{\cC_2}(z)$ the affine structures on  $\W^s_{\cC_2}(z)= \W^s_{\cC_2}(x)$ based at $z$. 

Let $x$ be a point like in the previous paragraph and $y$ a regular point in $\W^s_{\cC_2}(x)$. By Proposition \ref{affinestructures}, item (\ref{linear}) we have that there is a coordinate plane $V$ such that $D_0H_x(V)=T_x\W^s_{\cC_1}(x)$

Let us consider the manifold $W=H_x^{-1}(\W^s_{\cC_1}(y))\subset \Rs$ and let us show that this manifold is a plane parallel to $V$. Let us assume that (Lebesgue) a.e. $z\in \W^s_{\cC_1}(y)$ is regular point. We shall show that for Lebesgue a.e. point $a\in W$, $T_aW=V$. By  Proposition \ref{affinestructures}, item (\ref{linear}) we have that $D_0H_z(V)=T_z\W^s_{\cC_1}(z)$, and since $\W^s_{\cC_1}(z)=\W^s_{\cC_1}(y)$ we get that $$T_z\W^s_{\cC_1}(y)=D_0H_z(V).$$

On the other hand, by Proposition \ref{coherence} we have that $H_x^{-1}\circ H_z$ is an affine map with diagonal linear part hence the derivative at $0$ of $H_x^{-1}\circ H_z$ is diagonal. So we have that if $a\in W$ and $H_x(a)=z$ is a regular point then \begin{eqnarray*}T_aW&=&T_aH_x^{-1}(\W^s_{\cC_1}(y))=D_zH_x^{-1}(T_z\W^s_{\cC_1}(y))=D_zH_x^{-1}(D_0H_z(V))\\&=&D_0(H_x^{-1}\circ H_z)(V).\end{eqnarray*}
Since $D_0(H_x^{-1}\circ H_z)$ is diagonal and $V$ is a coordinate plane we get that $$D_0(H_x^{-1}\circ H_z)(V)=V$$ and hence $T_aW=V$ for Lebesgue a.e. $a\in W$. Since $W$ is a $C^1$ manifolds then we have that $T_aW=V$ for every $a\in W$ and hence $W$ is a plane parallel to $V$ as wanted. 

\end{proof}

\begin{rem}Since not every point on a leaf of $\w^s_\cC$ is regular not all 
coordinate  lines correspond to actual leaves of Lyapunov foliations. But for a typical leaf this is true for almost every  coordinate line. On the other hand, in the setting of Lemma \ref{affinecoherence}, if $\w^s_{\cC_1}\subset \W^s_{\cC_2}$, then $\w^s_{\cC_1}$ uniquely extends to  a smooth foliation, indeed an affine foliation in the affine coordinates of $\W^s_{\cC_2}(x)$
\end{rem}
A corollary of the above is the following,
\begin{cor}
Let $\cC_{1}$ and $\cC_{2}$ be two Weyl chambers and let $\cC_3$ be the Weyl chamber such that $E^s_{\cC_1}\cap E^s_{\cC_2}=E^s_{\cC_3}$. Then there is a set of full measure $R\subset M$ such that if $x,y,z\in R$ with $z\in\w^s_{\cC_1}(x)\cap\w^s_{\cC_2}(y)$ then $\w^s_{\cC_3}(z)\subset \w^s_{\cC_1}(x)\cap\w^s_{\cC_2}(y)$ is a linear subspace in the affine structures along $\w^s_{\cC_1}(x)$ and $\w^s_{\cC_2}(y)$ tangent to the space corresponding to $E^s_{\cC_3}$.
\end{cor}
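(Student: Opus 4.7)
The plan is to reduce the corollary directly to Lemma~\ref{affinecoherence} by first identifying $\w^s_{\cC_3}$ as a sub\-foliation of both $\W^s_{\cC_1}$ and $\W^s_{\cC_2}$. The starting observation is that since $E^s_{\cC_3}=E^s_{\cC_1}\cap E^s_{\cC_2}$, every Lyapunov exponent that is negative on $\cC_3$ is also negative on $\cC_1$ and on $\cC_2$. Hence at every regular point $z$ the leaf $\w^s_{\cC_3}(z)$ is an integral manifold tangent to $E^s_{\cC_3}(z)\subset E^s_{\cC_1}(z)\cap E^s_{\cC_2}(z)$, and by uniqueness of integral manifolds for each stable distribution one gets inclusions $\w^s_{\cC_3}(z)\subset \w^s_{\cC_1}(z)$ and $\w^s_{\cC_3}(z)\subset \w^s_{\cC_2}(z)$.

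Next I would take $R$ to be the intersection of the full-measure sets coming from Proposition~\ref{affinestructures}, Proposition~\ref{coherence}, and Lemma~\ref{affinecoherence} applied to each pair among the three chambers, together with a set of Lyapunov-regular points where all the stable manifolds $\w^s_{\cC_i}(\cdot)$ are well-defined. If $x,y,z\in R$ and $z\in \w^s_{\cC_1}(x)\cap \w^s_{\cC_2}(y)$, then $\w^s_{\cC_1}(z)=\w^s_{\cC_1}(x)$ and $\w^s_{\cC_2}(z)=\w^s_{\cC_2}(y)$, so the first step yields $\w^s_{\cC_3}(z)\subset \w^s_{\cC_1}(x)\cap \w^s_{\cC_2}(y)$.

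Finally, Lemma~\ref{affinecoherence} (and the remark following it) tells us that the inclusion $\w^s_{\cC_3}\subset \W^s_{\cC_1}$ forces the leaves of $\w^s_{\cC_3}$ inside $\w^s_{\cC_1}(x)$ to be coordinate affine planes in the affine chart $H_x$, parallel to the subspace spanned by those coordinates corresponding to the Lyapunov exponents constituting $E^s_{\cC_3}$. By item~(3) of Proposition~\ref{affinestructures} this coordinate plane corresponds under $D_0H_x$ exactly to $E^s_{\cC_3}(x)$, which is the tangent direction claimed. The same argument applied with $\cC_2$ in place of $\cC_1$ gives the analogous conclusion on the other side, completing the proof.

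I do not expect any serious obstacle here: the real work has been done in Proposition~\ref{coherence} (coherence of affine parameters along stable leaves) and in Lemma~\ref{affinecoherence} (restriction/product compatibility of affine structures across nested Weyl chambers). The only care required is in the bookkeeping when choosing $R$, so that the various generic-leaf statements hold simultaneously for the three chambers and all three pairwise comparisons, and in using absolute continuity of $\mu$ along the foliations to pass from ``almost every leaf'' statements about one foliation to ``almost every point'' statements that remain true after slicing by another.
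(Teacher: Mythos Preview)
Your proposal is correct and follows exactly the approach the paper intends: the paper states this as an immediate corollary of Lemma~\ref{affinecoherence} (and its proof, which shows that leaves of the smaller foliation are parallel coordinate planes in the affine chart of the larger one), together with the preceding remark. Your write-up is a faithful fleshing-out of that deduction, including the only point requiring care---intersecting the various full-measure sets and using absolute continuity so that the generic-leaf statements apply simultaneously to all three chambers at the chosen points $x,y,z$.
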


Now Hopf argument  can be applied and we obtain 
\begin{cor} Affine structures  on the leaves of the Lyapunov foliation $\mathcal W$ are  invariant with respect to the holonomy along the leaves of $\w_\cC$.
\end{cor}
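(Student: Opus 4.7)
Plan: The plan is to mimic the Hopf-style synchronization argument of Proposition~\ref{coherence}, replacing the comparison map $H_y^{-1}\circ H_x$ between two affine parametrizations on the same stable leaf by the transferred holonomy $\tilde\psi_{x,y}:=H_y^{-1}\circ\psi_{x,y}\circ H_x$, where $\psi_{x,y}\colon\mathcal W(x)\to\mathcal W(y)$ is the local holonomy along $\w_\cC$ between two nearby typical points $x$ and $y\in\w_\cC(x)$.

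First I would pick $\bn\in\Zmm$ lying in a Weyl chamber that contracts the leaves of $\w_\cC$ uniformly while preserving the foliation $\mathcal W$; such $\bn$ exists by the general-position hypothesis on the Lyapunov spectrum, since both $\w_\cC$ and $\mathcal W$ are $\a$-invariant. Using $\a$-invariance of the two foliations, $\psi$ satisfies the equivariance $\psi_{\an(x),\an(y)}=\an\circ\psi_{x,y}\circ\a(-\bn)$, which combined with the cocycle identity $\an\circ H_z=H_{\an(z)}\circ A(\bn,z)$ of Proposition~\ref{affinestructures} yields the conjugation formula
\begin{equation*}
\tilde\psi_{\an(x),\an(y)}=A(\bn,y)\circ\tilde\psi_{x,y}\circ A(\bn,x)^{-1},
\end{equation*}
entirely analogous to~(\ref{past}).

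Next I would fix a Luzin set $L$ of measure close to one on which $z\mapsto H_z$ is continuous and on which local $\w_\cC$-holonomy depends continuously on its basepoint (the latter being standard Pesin-block continuity of stable holonomies, since $\w_\cC$ is the stable foliation of some element of the action). Birkhoff's theorem furnishes times $l_i\to+\infty$ for which both $\a(l_i\bn)(x)$ and $\a(l_i\bn)(y)$ lie in $L$; since $d(\a(l_i\bn)(x),\a(l_i\bn)(y))\to 0$, continuity on $L$ forces
\begin{equation*}
\lim_{l_i\to+\infty}\bigl\|\tilde\psi_{\a(l_i\bn)(x),\a(l_i\bn)(y)}-\Id\bigr\|_{C^1(B(1))}=0.
\end{equation*}
From this point the proof of Proposition~\ref{coherence} transfers verbatim: varying $\bn$ inside $\cC$ to pinch the Lyapunov spectrum kills every off-diagonal partial derivative of $\tilde\psi_{x,y}$ and forces each diagonal derivative to be a global constant, so $\tilde\psi_{x,y}$ is affine with diagonal linear part. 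Since the affine structure on $\mathcal W$ is the restriction of the affine structure on any $\w^s_\cC$ containing it (Lemma~\ref{affinecoherence}), this is exactly the claimed holonomy invariance.

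The main obstacle will be the $C^1$-continuity of the $\w_\cC$-holonomy on a Luzin set of large measure, which is needed to upgrade convergence of distances to convergence of the conjugated holonomies in $C^1(B(1))$. Absolute continuity of the foliations is already in hand, but uniform $C^1$ control of stable holonomies on Pesin blocks must be invoked carefully; in the present setting this is available because $\w_\cC$ is itself the stable foliation of a hyperbolic element of the action, so smooth dependence of stable holonomies on base points within Pesin blocks applies directly and supplies the missing regularity.
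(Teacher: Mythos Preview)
Your contraction argument is a correct Hopf-type argument and is one legitimate reading of the paper's terse ``Now Hopf argument can be applied.'' However, the paper's placement of the corollary --- immediately after Lemma~\ref{affinecoherence}, its corollary on intersections, and the remark that each $\w^s_{\cC_1}\subset\W^s_{\cC_2}$ extends to an \emph{affine} subfoliation in the affine coordinates of $\W^s_{\cC_2}$ --- points to a shorter deduction. Once Lemma~\ref{affinecoherence} is in hand, $\W^s_\cC(x)$ carries a product affine structure in which the leaves of $\mathcal W$ are parallel coordinate lines and the complementary Lyapunov directions are parallel coordinate planes. The holonomy inside $\W^s_\cC(x)$ along those complementary directions is then literally a translation in the affine chart $H^\cC_x$, and hence its restriction to each $\mathcal W$-leaf is affine. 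No further iteration or Luzin-set argument is needed.

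What your route buys is independence from Lemma~\ref{affinecoherence}: you reprove holonomy-affineness directly from the cocycle relation and recurrence, as in Proposition~\ref{coherence}, and only cite Lemma~\ref{affinecoherence} at the end to identify the affine structures. The cost is the extra work you correctly flag as the main obstacle --- uniform $C^1$ control of the $\w_\cC$-holonomy on Pesin blocks. Note that this obstacle evaporates in the paper's approach: since the complementary foliations are already affine inside $\W^s_\cC$ by Lemma~\ref{affinecoherence}, the holonomy is smooth (indeed isometric in those coordinates) without any appeal to non-uniform Pesin estimates. Also, since $\mathcal W$ is one-dimensional here, your ``pinching to kill off-diagonal partials'' step is vacuous; only the constancy of the single derivative is at stake, and that part of the Proposition~\ref{coherence} argument goes through without varying $\bn$.
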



\subsection{Uniformity of the holonomies} 

Now we want to show that holonomy maps along unstable manifolds between two stable manifolds are almost everywhere defined w.r.t. Lebesgue measure on stables. 
\begin{prop}\label{unifholonomy}
Let $\cC$ be a Weyl chamber. There is a set of full measure $R:=R_\cC\subset M$ such that if $x,y\in R$ and $y\in \W^u_\cC(x)$ then the holonomy along unstables $Hol^{u,\cC}_{x,y}:\W^s_\cC(x)\to \W^s_\cC(y)$ is defined for Lebesgue a.e. $z\in \W^s_\cC(x)$ and is affine, i.e. there is a diagonal linear map $B$ preserving the frame such that for a.e. $z\in  \W^s_\cC(x)$, there is a point $Hol^{u,\cC}_{x,y}(z)\in\W^u_\cC(z)\cap \W^s_\cC(y)$ and moreover $$Hol^{u,\cC}_{x,y}\circ H^\cC_x=H^\cC_y\circ B.$$
Lebesgue a.e. 
\end {prop}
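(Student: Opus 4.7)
The plan is to prove existence and affineness of the holonomy separately. For existence, I would combine Lemma \ref{weakmixing2}(ii) with absolute continuity of the invariant foliations: for $\mu$-almost every $x$, the union $\bigcup_{y \in \W^u_\cC(x)\cap R}\W^s_\cC(y)$ carries full $\mu$-measure, and since $\mu$ has absolutely continuous conditionals along the stable and unstable leaves (by \cite[Theorem 2.11]{KRH2}), this converts via a standard Fubini-type argument into the statement that for almost every $y \in \W^u_\cC(x)$ and Lebesgue-almost every $z \in \W^s_\cC(x)$, the local unstable leaf $\W^u_\cC(z)$ meets $\W^s_\cC(y)$ at a well-defined point $Hol^{u,\cC}_{x,y}(z)$. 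The desired set $R_\cC$ is obtained by intersecting the full-measure sets provided by Lemma \ref{weakmixing2}, Proposition \ref{affinestructures}, Proposition \ref{coherence} and Lemma \ref{affinecoherence}.

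For affineness, the key structural input is the product decomposition $\W^s_\cC(x)\cong \prod_{i=1}^{s}\W^{\chi_i}(x)$ delivered by Lemma \ref{affinecoherence}, which lets me use coordinates $(u_1,\dots,u_s)$ on $\W^s_\cC(x)$ and $(v_1,\dots,v_s)$ on $\W^s_\cC(y)$ in which the Lyapunov foliations $\W^{\chi_i}$ appear as coordinate lines and the full affine structure is the product of the one-dimensional affine structures on each $\W^{\chi_i}$. For each index $i$, I would invoke \cite[Proposition 4.2]{KRH2} with the following choice of singular element: by the general-position assumption on Lyapunov exponents, take $\bt_i$ in the relative interior of the face of $-\cC$ on which $\chi_i$ vanishes. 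Then $\chi_i(\bt_i)=0$, while $\chi_l(\bt_i)$ has the same sign as $-\chi_l(\cC)$ for every $l\ne i$, so the stable foliation of $\a(\bt_i)$ coincides exactly with $\W^u_\cC$. Hence the affine structure on $\W^{\chi_i}$ is invariant under $\W^u_\cC$-holonomy.

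The last step combines these ingredients. Because Lyapunov foliations are dynamically defined, unstable holonomy preserves them: $Hol^{u,\cC}_{x,y}$ sends each $\W^{\chi_i}$-leaf inside $\W^s_\cC(x)$ to a $\W^{\chi_i}$-leaf inside $\W^s_\cC(y)$. In the product coordinates above, this means that the coordinates $v_l$ with $l\ne i$ of $Hol^{u,\cC}_{x,y}(z)$ depend on $z$ only through its $\W^{\chi_i}$-leaf, i.e.\ only through $(u_k)_{k\ne i}$. Since this holds for every $i$, each $v_l$ depends only on $u_l$. The restriction to a single $\W^{\chi_i}$-leaf is precisely the $\W^u_\cC$-holonomy on $\W^{\chi_i}$, which by the previous paragraph is affine in the Lyapunov affine parameter. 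Therefore $Hol^{u,\cC}_{x,y}$ in $(u,v)$ coordinates is a product of one-dimensional affine maps, that is, a diagonal affine map; since $Hol^{u,\cC}_{x,y}(x)=y$ corresponds to $0\mapsto 0$, the translation part vanishes, leaving a diagonal linear $B$ preserving the coordinate frame with $Hol^{u,\cC}_{x,y}\circ H^\cC_x = H^\cC_y\circ B$ Lebesgue-almost everywhere.

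The principal obstacle I anticipate is matching the chamber geometry with the hypotheses of \cite[Proposition 4.2]{KRH2}: one must verify that the singular element $\bt_i$ constructed above is ``generic'' in the required sense so that its full stable manifold equals $\W^u_\cC$, and that the leafwise affine invariance transfers cleanly through the product decomposition uniformly in the base point of the stable leaf. A secondary technical issue is assembling the various almost-everywhere statements (existence of affine structures, coherence, holonomy invariance, and a.e.\ definition of the holonomy itself) into a single set $R_\cC$ on which all conclusions hold simultaneously for a.e. pair $(x,y)$ with $y\in \W^u_\cC(x)$; this requires careful bookkeeping with Fubini along $\W^s_\cC$ and $\W^u_\cC$, but no new ideas beyond those already used in the proofs of Propositions \ref{affinestructures} and \ref{coherence}.
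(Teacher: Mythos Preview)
Your approach differs substantially from the paper's, and the obstacle you flag is real and fatal in the base case. When $s(\cC)=1$, say $E^s_\cC=E^{\chi_1}$, the generic singular element $\bt_1$ you want does \emph{not} exist: you need $\chi_1(\bt_1)=0$ and $\chi_j(\bt_1)<0$ for all $j\ge 2$, but the unique linear relation $\sum_{j=1}^m c_j\chi_j=0$ in the maximal-rank setting has all $c_j>0$ (this is exactly why the all-positive and all-negative sign patterns are the only missing ones), so on $\Ker\chi_1$ one has $\sum_{j\ge 2}c_j\chi_j=0$ and the remaining $\chi_j$ can never be simultaneously negative. Equivalently, $\Ker\chi_1$ is \emph{not} a wall of $-\cC$ when $s(\cC)=1$. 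Hence \cite[Proposition~4.2]{KRH2} cannot be invoked with stable foliation equal to $\W^u_\cC$, and your scheme breaks down precisely in the one-dimensional case that must anchor any inductive or product argument. A second, independent gap is the claim that $Hol^{u,\cC}_{x,y}$ sends $\W^{\chi_i}$-leaves in $\W^s_\cC(x)$ to $\W^{\chi_i}$-leaves in $\W^s_\cC(y)$: ``dynamically defined'' is not enough, since the holonomy is along $\W^u_\cC$, not along an invariant foliation containing $\W^{\chi_i}$, and a priori the intersection $\W^u_\cC(z)\cap\W^s_\cC(y)$ has no reason to respect the finer Lyapunov splitting of $\W^s_\cC(y)$.

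The paper's proof avoids both problems with a single idea: rather than invoking \cite[Proposition~4.2]{KRH2} for the full $\W^u_\cC$-holonomy, it decomposes the unstable direction and works \emph{inside} a larger unstable manifold carrying its own affine structure. Concretely, one picks Weyl chambers $\cC_1,\cC_2$ with $E^u_{\cC_1}\oplus E^u_{\cC_2}=E^u_\cC$ (base case) or $E^s_{\cC_1}\oplus E^s_{\cC_2}=E^s_\cC$ (inductive step), and uses that $E^s_\cC\oplus E^u_{\cC_1}=E^u_{\cC_3}$ for some chamber $\cC_3$. By Lemma~\ref{affinecoherence}, $\W^s_\cC$ and $\W^u_{\cC_1}$ are then complementary \emph{linear} subfoliations of the affinely-coordinatized leaf $\W^u_{\cC_3}$, so the piece of the holonomy along $\W^u_{\cC_1}$ is just parallel transport between parallel affine subspaces: everywhere defined and diagonal. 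A zigzag through intermediate regular points recovers the full $\W^u_\cC$-holonomy as a composition of such maps. This simultaneously gives existence (no Fubini bookkeeping from Lemma~\ref{weakmixing2} is needed) and affineness, and it works uniformly in $s(\cC)$.
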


\begin{proof}
Let as assume first that $\w^s_\cC$ is $1$-dimensional. Let $\cC_1$ and $\cC_2$ be Weyl chambers such that $E^u_{\cC_1}\oplus E^u_{\cC_2}=E^u_\cC$. By Lemma \ref{affinecoherence} we have that $\w^u_{\cC_1}$ and $\w^u_{\cC_2}$ are a pair of transverse linear sub-foliaitons of $\W^u_{\cC}$. Hence there is a set of full measure $R_0$ such if $x\in R_0$ and $y\in R_0$ then there are regular points $a, b\in \W^u_{\cC}(x)$ such that $a\in\w^u_{\cC_1}(x)$, $b\in\w^u_{\cC_1}(y)$ and $a\in\w^u_{\cC_2}(b)$. We have that $$Hol^{u,\cC}_{x,y}(z)=Hol^{u,\cC}_{b,y}\circ Hol^{u,\cC}_{a,b}\circ Hol^{u,\cC}_{x,a}(z),$$ see Figure 1. Since $E^s_{\cC}\oplus E^u_{\cC_1}=E^u_{\cC_3}$ for some Weyl chamber $\cC_3$ and $E^s_{\cC}\oplus E^u_{\cC_2}=E^u_{\cC_4}$  for some Weyl chamber $\cC_4$ we have by Lemma \ref{affinecoherence} that $\w^u_{\cC}$ and $\w^s_{\cC_1}$ are transverse linear subfoliations of $\w^u_{\cC_3}(x)$ and hence $Hol^{u,\cC}_{x,a}:\w^s_{\cC}(x)\to\w^s_{\cC}(a)$ is affine holonomy inside $\w^u_{\cC_3}(x)$ which is everywhere defined. Similarly with $Hol^{u,\cC}_{b,y}$ and $Hol^{u,\cC}_{a,b}$ which settles the claim when $\w^s_\cC$ is $1$-dimensional.

\begin{figure}[hbt]\label{prop271}
 \psfrag{s}[l]{\small $ \W^s_\cC(x)$}
  \psfrag{u}[l]{\small $ \W^u_\cC(x)$}
   \psfrag{d}[l]{\small $ \W^u_{\cC_1}$}
    \psfrag{e}[l]{\small $ \W^u_{\cC_2}$}
 \psfrag{f}[l]{\small $ \W^u_{\cC_4}$}
   \psfrag{a}[l]{\small $a$}
   \psfrag{b}[l]{\small $b$}
   \psfrag{x}[l]{\small $x$}
   \psfrag{y}[l]{\small $y$}
    \includegraphics[scale=0.7]{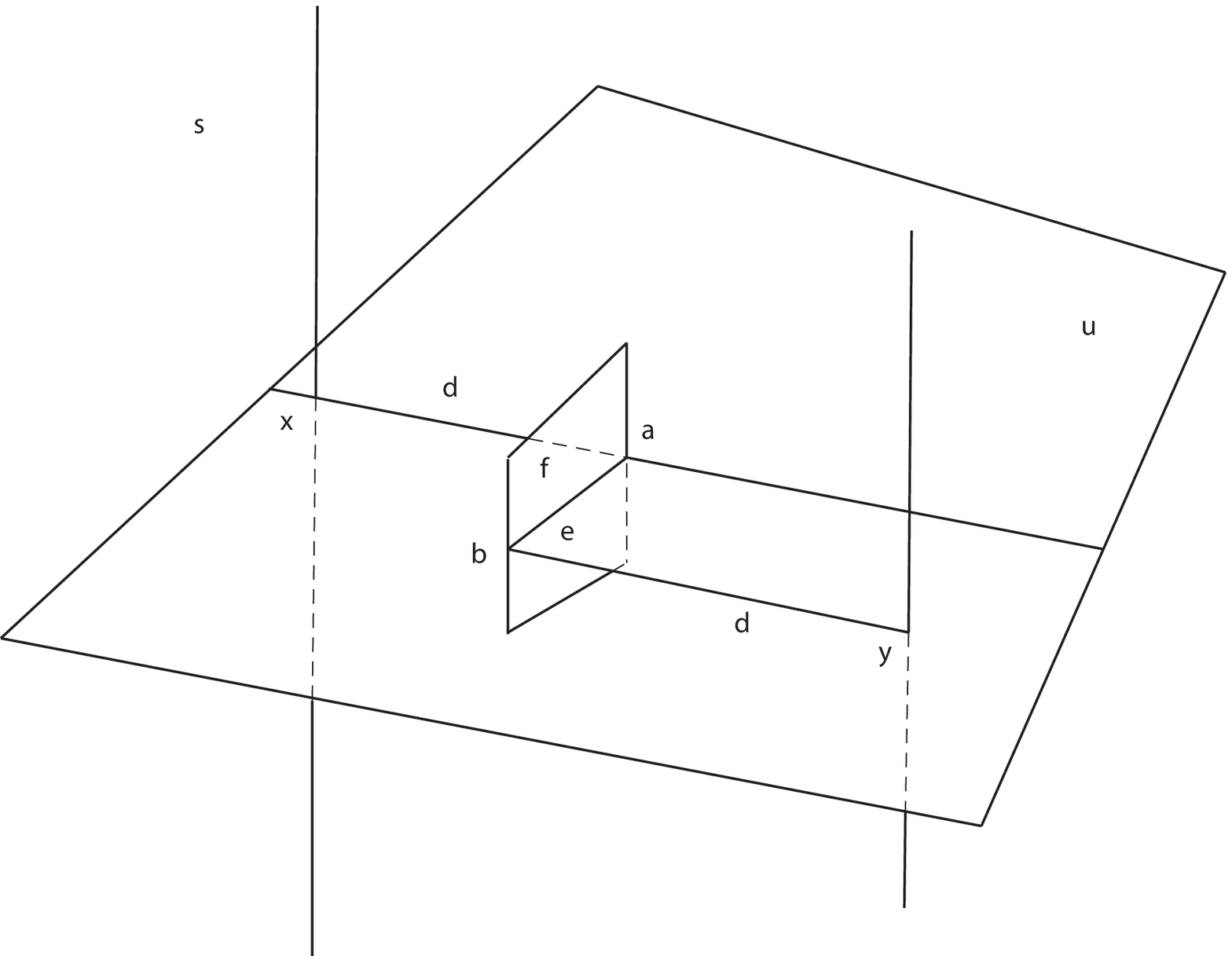}  
  \caption{Proposition 2.7.i}
   \end{figure}

Let us now assume that the dimension of  $\w^s_\cC$, $s(\cC)$, is larger that one and assume by induction that we have proven the Proposition for dimension $s<s(\cC)$. Then we have that there are Weyl chambers $\cC_1$ and $\cC_2$ such that $E^u_\cC=E^u_{\cC_1}\cap E^u_{\cC_2}$ and $E^s_\cC=E^s_{\cC_1}\oplus E^s_{\cC_2}$. We have that the dimensions $s(\cC_i)$, $i=1,2$ of $\w^s_{\cC_i}$ are strictly smaller than $s(\cC)$.

By the induction hypothesis, we have that for a.e. points $x$ and $y\in \w^u_{\cC}(x)\subset \w^u_{\cC_1}(x)$, $Hol^{u,\cC_1}_{x,y}:\W^s_{\cC_1}(x)\to \W^s_{\cC_1}(y)$ is everywhere defined and is affine. 

Taking $x, y$ typical points, we have that Lebesgue a.e. point $a\in \W^s_{\cC_1}(x)$ is regular, and $Hol^{u,\cC_1}_{x,y}(a)\in \W^u_{\cC_1}(a)\cap \W^s_{\cC_1}(y)$ is also regular, see Figure 2. Moreover, $Hol^{u,\cC_1}_{x,y}(a)=Hol^{u,\cC}_{x,y}(a)$. Indeed, $E^u_{\cC}\oplus E^s_{\cC_1}=E^u_{\cC_3}$ for some Weyl chamber $\cC_3$ which gives by Lemma \ref{affinecoherence} that $\w^u_\cC$ and $\w^s_{\cC_1}$ are a pair of transverse affine foliations in $\W^u_{\cC_3}(x)=\w^u_{\cC_3}(y)$ and hence $Hol^{u,\cC_1}_{x,y}(a)=Hol^{u,\cC}_{x,y}(a)\in \w^u_{\cC}(a)\cap\w^s_{\cC_1}(y)\subset\W^u_{\cC_1}(a)\cap \W^s_{\cC_1}(y)$, see Figure 2.

\begin{figure}[hbt]\label{prop272}
 \psfrag{s}[l]{\small $ \W^s_\cC(x)$}
  \psfrag{u}[l]{\small $ \W^u_\cC(x)$}
   \psfrag{d}[l]{\small $ \W^s_{\cC_1}$}
    \psfrag{e}[l]{\small $ \W^s_{\cC_2}$}
 \psfrag{f}[l]{\small $ \W^u_{\cC_4}$}
   \psfrag{a}[l]{\small $a$}
   \psfrag{b}[l]{\small $b$}
   \psfrag{x}[l]{\small $x$}
   \psfrag{y}[l]{\small $y$}
    \includegraphics[scale=0.7]{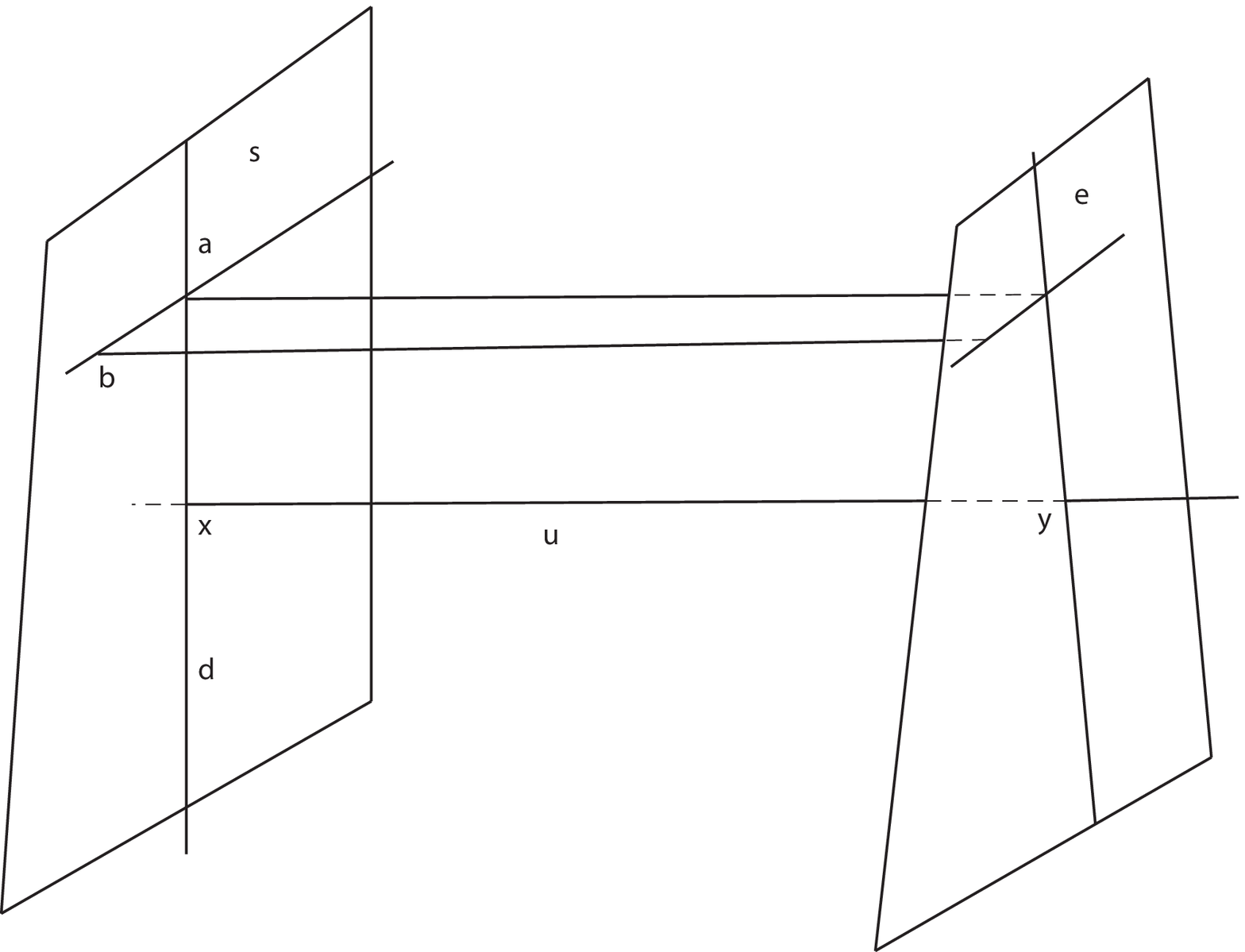}  
  \caption{Proposition 2.7.ii}
   \end{figure}

Now, for Lebesgue a.e. $z\in \w^s_{\cC}(x)$, $\w^s_{\cC_2}(z)\cap\w^s_{\cC_1}(x)$ intersects in a point $a\in \w^s_{\cC_1}(x)$ in the conditions of the previous paragraph. Hence $Hol^{u,\cC}_{x,y}(a)$ is well defined and again by induction we have that  $$Hol^{u,\cC_2}_{a,Hol^{u,\cC}_{x,y}(a)}:\W^s_{\cC_2}(a)\to \W^s_{\cC_1}(Hol^{u,\cC}_{x,y}(a))$$ is Lebesgue a.e. defined and is affine and  $$Hol^{u,\cC_2}_{a,Hol^{u,\cC}_{x,y}(a)}(z)\in \W^u_{\cC_2}(z)\cap \W^s_{\cC_2}(Hol^{u,\cC}_{x,y}(a)).$$ Moreover, since $E^u_{\cC}$ and $E^s_{\cC_2}$ are jointly integrable, arguing as before, we have that $$Hol^{u,\cC_2}_{a,Hol^{u,\cC}_{x,y}(a)}(z)=Hol^{u,\cC}_{x,y}(z).$$
Which gives that $Hol^{u,\cC}_{x,y}$ is Lebesgue a.e. defined and we get the Proposition.

\end{proof}

When $\cC$ is understood we shall note directly $Hol^u_{x,y}=Hol^{u,\cC}_{x,y}$.
\begin{prop}\label{prodstructure}
For $x,a,b\in R_\cC$, $a\in \W^u_\cC(x)$ and $b\in \W^s_\cC(x)$ we have that $$Hol^s_{x,b}(a)=Hol^u_{x,a}(b).$$
\end{prop}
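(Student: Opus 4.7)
\textbf{Proof plan for Proposition \ref{prodstructure}.} From the statements of the holonomy propositions (Proposition \ref{unifholonomy} and its mirror image with $s$ and $u$ exchanged), we have that both candidate points
\[
P_1 := Hol^u_{x,a}(b) \quad \text{and} \quad P_2 := Hol^s_{x,b}(a)
\]
lie in the intersection $W^s_\cC(a)\cap W^u_\cC(b)$. The entire content of the proposition is thus that the holonomy constructions, carried out on $W^s_\cC(x)$ and $W^u_\cC(x)$ respectively, select \emph{the same} point of this intersection. The strategy is to show that, after passing to a suitable iterate, both points coincide with the unique local bracket point coming from the local product structure of $W^s_\cC$ and $W^u_\cC$.

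First, restrict to a full measure set $R\subset R_\cC$ on which: (i) both holonomies are defined by Proposition \ref{unifholonomy} and its $s$--$u$ dual; (ii) $x,a,b$ and the candidate points $P_1,P_2$ are Lyapunov regular; (iii) the affine structures $H^{s,\cC}_\cdot$ and $H^{u,\cC}_\cdot$ are continuous on a Luzin set $L$ of $\mu$-measure close to $1$; (iv) Birkhoff's theorem applied with the indicator of $L$ is valid for some fixed element $\bn\in\cC$. By the equivariance statement of Proposition \ref{affinestructures} applied to both holonomies one has
\[
\a(N\bn)(P_j)\in W^s_\cC(\a(N\bn)a)\cap W^u_\cC(\a(N\bn)b), \qquad j=1,2,
\]
for every $N\in\Z$. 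Choose $N>0$ large so that $\a(N\bn)(x), \a(N\bn)(b)\in L$ and, by stable contraction, $d(\a(N\bn)x,\a(N\bn)b)$ is arbitrarily small. In a Pesin chart of uniform size centered at $\a(N\bn)x$, the transverse $C^{1+\theta}$ foliations $W^s_\cC, W^u_\cC$ have local product structure, so the local bracket $[\a(N\bn)a,\a(N\bn)b]:=W^s_{{\rm loc}}(\a(N\bn)a)\cap W^u_{{\rm loc}}(\a(N\bn)b)$ is a single point. From the chain-of-Weyl-chambers construction in the proof of Proposition \ref{unifholonomy}, the local stable holonomy and the local unstable holonomy both coincide with this local bracket on nearby regular points; hence $\a(N\bn)P_1=\a(N\bn)P_2=[\a(N\bn)a,\a(N\bn)b]$. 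Applying $\a(-N\bn)$ yields $P_1=P_2$, which is the desired identity.

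The main obstacle is the asymmetry of the iteration: forward iteration by $\a(N\bn)$ brings $b$ close to $x$ along $W^s_\cC$ but simultaneously pushes $a$ away along $W^u_\cC$, so one cannot reduce to the ``everything is nearby'' setting by a single choice of $N$. The remedy, already used in Propositions \ref{coherence} and \ref{unifholonomy}, is to work in a \emph{fixed-size} Pesin chart containing $\a(N\bn)x$, in which $W^u_\cC$ and $W^s_\cC$ are transverse foliations of bounded geometry. The global hyperbolic point $\a(N\bn)a\in W^u_\cC(\a(N\bn)x)$ determines a unique local unstable leaf passing through each point of a neighborhood, and pairwise closeness of the full configuration is not required — only the closeness of $\a(N\bn)b$ to $\a(N\bn)x$, which follows from stable contraction. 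This closes the argument via Birkhoff/Luzin exactly as in the earlier coherence results.
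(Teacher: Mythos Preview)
Your reduction to showing that the two candidate points $P_1,P_2\in W^s_\cC(a)\cap W^u_\cC(b)$ coincide is correct, but the iteration argument has a genuine gap that your ``remedy'' paragraph does not close. Forward iteration by $\bn\in\cC$ brings $\a(N\bn)b$ close to $\a(N\bn)x$ while pushing $\a(N\bn)a$ far away along $W^u_\cC$. In any Pesin chart of fixed size around $\a(N\bn)x$ the point $\a(N\bn)a$ is simply absent, and the global leaf $W^s_\cC(\a(N\bn)a)$ need not meet the chart in a single connected plaque (or at all). Thus there is no local bracket $[\a(N\bn)a,\a(N\bn)b]$ available in that chart. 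If instead you center the chart at $\a(N\bn)a$ (where both $\a(N\bn)P_j$ do approach along $W^s_\cC(\a(N\bn)a)$), the relevant unstable leaf is $W^u_\cC(\a(N\bn)b)$, whose base point $\a(N\bn)b$ is far away; the piece of this global unstable leaf that passes near $\a(N\bn)a$ is not a local leaf of the Pesin product structure, so uniqueness of the intersection again fails. The sentence ``the global hyperbolic point $\a(N\bn)a$ determines a unique local unstable leaf passing through each point of a neighborhood'' conflates the local unstable lamination in the chart with a single global leaf and does not produce the needed uniqueness.

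The paper's proof avoids iteration entirely and is a purely structural higher-rank argument, in the spirit of Proposition~\ref{unifholonomy}. One splits $E^s_\cC=E^s_{\cC_1}\oplus E^s_{\cC_2}$ and factors the stable holonomy $Hol^s_{x,b}$ through intermediate regular points $z_1,z_2$ along $\w^s_{\cC_1}$ and $\w^s_{\cC_2}$; the crucial point is that for each step one has $E^u_\cC\oplus E^s_{\cC_i}=E^u_{\cC'}$ for some other Weyl chamber $\cC'$, so the entire rectangle $\{x,a,z_1,Hol^u_{x,a}(z_1)\}$ lies inside a single leaf $\w^u_{\cC'}(x)$. Inside that leaf the subfoliations $\w^u_\cC$ and $\w^s_{\cC_i}$ are transverse affine subfoliations (Lemma~\ref{affinecoherence}), so $Hol^u$ and $Hol^s$ there are both the linear bracket and hence agree. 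Chaining the three steps gives $Hol^s_{x,b}(a)=Hol^u_{x,a}(b)$. The uniqueness you were trying to extract from local product structure is supplied instead by the global affine product structure on the larger leaf $\w^u_{\cC'}$.
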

\begin{proof}
The proof of this Proposition is very similar to the previous one. Assume without loss of generality that dimension of $\w^s_\cC$ is larger that $1$ (if not consider $\w^u_\cC$ instead). Even though we will use lot of Weyl chambers to settle the Proposition, all holonomies here will be w.r.t. invariant manifolds of the Weyl chamber $\cC$. 

\begin{figure}[hbt]
 \psfrag{s}[l]{\small $ \W^s_\cC(x)$}
  \psfrag{u}[l]{\small $ \W^u_\cC(x)$}
   \psfrag{w}[l]{\small $z_2$}
   \psfrag{t}[l]{\small $\bar{z_1}$}
   \psfrag{d}[l]{\small $\bar{z_2}$}
   \psfrag{z}[l]{\small $z_1$}
   \psfrag{a}[l]{\small $a$}
   \psfrag{b}[l]{\small $b$}
   \psfrag{x}[l]{\small $x$}
    \includegraphics[scale=0.7]{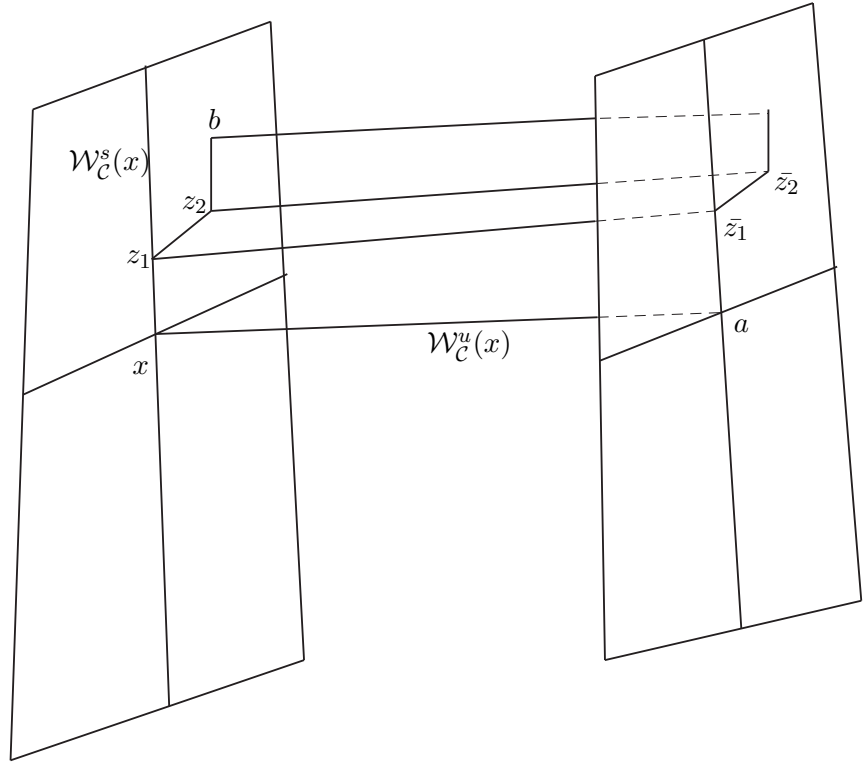}  
  \caption{Proposition 2.8}
   \end{figure}

Let $\cC_1$ and $\cC_2$ be Weyl chambers such that $E^s_{\cC_1}\oplus E^s_{\cC_2}=E^s_{\cC}$. Then we have that there are regular points $z_1,z_2\in \w^s_\cC(x)$ such that $\bar{z_1}:=Hol^u_{x,a}(z_1)$ and $\bar{z_2}:=Hol^u_{x,a}(z_2)$ are well defined and regular points and $z_1\in\w^s_{\cC_1}(x)$, $z_2\in \w^s_{\cC_2}(z_1)$ and $b\in \w^s_{\cC_1}(z_2)$, see Figure 3. We know on one hand that $Hol^s_{x,b}=Hol^s_{z_2,b}\circ Hol^s_{z_1,z_2}\circ Hol^s_{x,z_1}$. We have also that $Hol^u_{x,a}=Hol^u_{z_1,\bar{z_1}}=Hol^u_{z_2,\bar{z_2}}$.

Let $\cC_3$ and $\cC_4$ be Weyl chambers such that $E^u_{\cC}\oplus E^s_{\cC_1}=E^u_{\cC_3}$ and $E^u_{\cC}\oplus E^s_{\cC_2}=E^u_{\cC_4}$. Then we have, using that $x,a,z_1,\bar{z_1}\in \w^u_{\cC_3}(x)$ that $$\bar {z_1}=Hol^u_{x,a}(z_1)=Hol^s_{x,z_1}(a)$$ 
since all these holonomies take place inside $\w^u_{\cC_3}(x)$. Following this argument we get also that $z_1,z_2, \bar{z_1}, \bar{z_2}\in \w^u_{\cC_4}(z_1)$ and hence  $$\bar{z_2}=Hol^u_{x,a}(z_2)=Hol^u_{z_1,\bar{z_1}}(z_2)=Hol^s_{z_1,z_2}(\bar{z_1}).$$

Finally we get that, since $z_2,\bar{z_2},b,Hol^u_{z_2,\bar{z_2}}(b)\in\w^u_{\cC_3}(z_2)$, $$Hol^u_{x,a}(b)=Hol^u_{z_2,\bar{z_2}}(b)=Hol^s_{z_2,b}(\bar{z_2}).$$
Putting all together we get the Proposition.\end{proof}

\section{The arithmetic structure} 

\subsection{The quotient}\label{quotient}

Given $l\geq 1$, let $\mathcal D_l$ be the group of invertible diagonal matrices on $\mathbb R^l$ 
and let $\mathcal A_l$ be the group of affine maps on $\mathbb R^l$ whose linear term are in $\mathcal D_l$. In the sequel, when we say almost everywhere (a.e.) we mean w.r.t. Lebesgue measure unless another  measure is clearly specified.


Let us summarize in the following Proposition what we have proven in the previous sections. The first $3$ items are Proposition \ref{affinestructures}.


\begin{prop}\label{resume}
Given any Weyl chamber $\cC$ there is a set of full measure $R_ \cC$ such that if $x\in R_\cC$ then there are $C^r$ affine parameters $$H^\cC_x:\Rsc\to \W^s_\cC(x)$$such that 
\begin{enumerate}
\item $H^\cC_x(0)=x$ and  $x\to H^\cC_x\in Emb^r(\Rsc,M)$ is a measurable map. 
\item \label{normalization} $D_0H^\cC_x:\Rsc\to E^s_\cC(x)$ takes the standard frame in $\Rsc$ into the Lyapunov frame in $E^s_\cC(x)$. 

\item \label{linear} There is a cocycle $\hat{\alpha}^\cC:\Zmm\times R_\cC\to \mathcal D_{s(\cC)}$ such that for $\mu$-a.e. $x$, $$H^\cC_{\an(x)}\circ \hat{\alpha}^\cC_x(\mathbf{n})=\an\circ H^\cC_x.$$
\item \label{affineparameterscoherence} $ $[Proposition \ref{coherence}] If $x, y\in R_\cC$, $y\in \W^s_\cC(x)$ 
then there is $B_{x,y}\in \mathcal A_{s(\cC)}$ such that $$H^\cC_y=H^\cC_x\circ B_{x,y}$$
\item $ $[Lemma \ref{affinecoherence}] For any intersection of stable spaces of different Weyl Chambers, $\cC_1,\dots \cC_l$, $E=E^s_{\cC_1}\cap\dots\cap E^s_{\cC_l}$ the corresponding foliation $\W_E$ restricted to $\W^s_{\cC_i}(x)$ is a foliation by parallel planes in the affine parameters for every $i$. 
\item \label{changeorientation} [Proposition \ref{unifholonomy}] If $x,y\in R_\cC$ and $y\in \W^u_\cC(x)$ then the holonomy along unstables map $Hol^u_{x,y}:\W^s_\cC(x)\to \W^s_\cC(y)$ is defined for Lebesgue a.e. $z\in \W^s_\cC(x)$ and there is $D_{x,y}\in \mathcal D_{s(\cC)}$ such that $$Hol^u_{x,y}\circ H^\cC_x=H^\cC_y\circ D_{x,y}.$$
\item \label{holonomyproduct} [Proposition \ref{prodstructure}] For $x,a,b\in R_\cC$, $a\in \W^u_\cC(x)$ and $b\in \W^s_\cC(x)$ we have that $$Hol^s_{x,b}(a)=Hol^u_{x,a}(b).$$
\end{enumerate}
\end{prop}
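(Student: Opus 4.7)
The statement is essentially a compendium of results already proved earlier in Section~\ref{affinestructure}, so my plan is to assemble them carefully rather than to prove anything genuinely new. Fix a Weyl chamber $\cC$. I would begin by invoking Proposition~\ref{affinestructures}, which already supplies the measurable family $x\mapsto H^\cC_x$ of $C^{1+\epsilon}$ embeddings $\Rsc\hookrightarrow\W^s_\cC(x)$ together with the base-point normalization (item 1), the identification of the standard frame in $\Rsc$ with the Lyapunov frame in $E^s_\cC(x)$ under $D_0 H^\cC_x$ (item 2), and the diagonal cocycle $\hat\alpha^\cC$ intertwining $H^\cC$ with the action (item 3). The regularity upgrade from $C^{1+\epsilon}$ to $C^{r-\delta}$ is the remark made immediately after Proposition~\ref{affinestructures}.

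For the remaining items I intend to construct $R_\cC$ as the intersection of the various full-measure sets produced by each earlier statement. Item 4 is precisely the conclusion of Proposition~\ref{coherence}: inside the full-measure set it provides, if $y\in\W^s_\cC(x)$ then $(H^\cC_x)^{-1}\circ H^\cC_y$ is affine with diagonal linear part, so setting $B_{x,y}$ equal to this map gives the claimed element of $\mathcal A_{s(\cC)}$. Item 5 follows from Lemma~\ref{affinecoherence} together with the remark immediately following it, applied to each of the finitely many tuples $(\cC_1,\dots,\cC_l)$ of Weyl chambers and to every choice of $\cC$-chamber containing their common intersection; since there are only finitely many such tuples, the intersection of the associated full-measure sets is again of full measure. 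Items 6 and 7 are then obtained verbatim by further intersecting with the full-measure sets from Proposition~\ref{unifholonomy} and Proposition~\ref{prodstructure}.

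The only technical point worth a comment is \emph{compatibility across items}: each earlier result yields a conclusion of the form ``for $\mu$-a.e.~$x$, for Lebesgue-a.e.~$y$ in the stable or unstable leaf of $x$,'' and I want a single set $R_\cC$ on which all conditions hold simultaneously for every pair of points in the appropriate relative position. This is the standard Fubini/Luzin bookkeeping built on absolute continuity of the conditional measures of $\mu$ along stable and unstable foliations, itself a consequence of \cite[Theorem 2.11]{KRH2}: I first take the (countable, in fact finite) intersection of all the full-measure sets supplied by Propositions~\ref{affinestructures}, \ref{coherence}, \ref{unifholonomy}, \ref{prodstructure} and Lemma~\ref{affinecoherence}, and then remove a further $\mu$-null set of $x$ on whose stable/unstable leaves the fiber-a.e.~conditions fail. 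The resulting $R_\cC$ has full $\mu$-measure and satisfies all seven items simultaneously, which is exactly the content of the proposition. No new mathematical input is required; the main (mild) obstacle is purely organizational, namely making sure the various full-measure sets are combined in the right order so that each ``for a.e.~$y$'' clause is upgraded to ``for every $y\in R_\cC$'' in the claimed relative position.
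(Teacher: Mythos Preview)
Your proposal is correct and matches the paper's approach exactly: the paper introduces this proposition with the sentence ``Let us summarize in the following Proposition what we have proven in the previous sections,'' gives no separate proof, and simply annotates items 4--7 with bracketed references to Proposition~\ref{coherence}, Lemma~\ref{affinecoherence}, Proposition~\ref{unifholonomy}, and Proposition~\ref{prodstructure}. Your only addition is making the full-measure-set bookkeeping explicit, which the paper leaves implicit.
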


Observe that in the affine coordinates the conditional measure is standard Haar measure with some normalization.

Let us fix by now a Weyl chamber $\cC$ that we will not be moved. Considering $-\cC$ we have affine parameters along the unstable foliation for points in $R_ {-\cC}$, take $R=R_\cC\cap R_{-\cC}$, we shall assume also that $R$ intersected with a.e. Lyapunov manifold has full Lebesgue measure, also we may need to reduce $R$ finitely many times to sets of still full measure satisfying adequate properties. Let us omit any reference to $\cC$ in the definitions above and denote with $H^s_x$ the affine parameters along stable manifolds $\w^s$ and $H^u_x$ for the affine parameters along unstable manifolds $\w^u$. 

We shall define a kind of covering or developing map for $M$, defined almost everywhere. Roughly, the idea is as follow, for a given $x$ we define $\hat {h}_x:\w^s(x)\times \w^u(x)\to M$, $\hat{h}_x(a,b)=Hol^u_{x,b}(a)=Hol^s_{x,a}(b)$ which is a point in $\w^u(a)\cap\w^s(b)$. Then we use affine parameters on $\w^s(x)$ and $\w^u(x)$ to define a map $h_x:\Rs\times\Ru\to M$. Since holonomies are only defined a.e. we need to take some care and that is what we do in the following paragraphs. 

Let us fix $x\in R$ and assume that $\W^s(x)\cap R$ and $\W^u(x)\cap R$ have full Lebesgue measure. Call $$R^s=(H^s_x)^{-1}(\W^s(x)\cap R)\subset \Rs$$ and $$R^u=(H^u_x)^{-1}(\W^u(x)\cap R)\subset \Ru.$$

Given $z^u\in R^u$, using item (\ref{changeorientation}) we can take $D^x_{z^u}\in \mathcal D_s$ such that $$Hol^u_{x,H^u_x(z^u)}\circ H^s_x=H^s_{H^u_x(z^u)}\circ D^x_{z^u}$$ (Remember that $Hol^u_{x,H^u_x(z^u)}:\W^s(x)\to \W^s(H^u_x(z^u))$

Let us define $h^u_x:\Rs\times R^u\to M$ by $$h^u_x(z^s,z^u)=H^s_{H^u_x(z^u)}(D^x_{z^u}(z^s)).$$

Observe that by item (\ref{changeorientation}) for a.e. $(z^s,z^u)$, 
\begin{eqnarray}\label{huviaholonomy}
h^u_x(z^s,z^u)=Hol^u_{x,H^u_x(z^u)}\left(H^s_x(z^s)\right).
\end{eqnarray}

Similarly, given $z^s\in R^s$, using item (\ref{changeorientation}) we can take $D^x_{z^s}\in \mathcal  D_u$ such that 
\begin{eqnarray}\label{defDs}Hol^s_{x,H^s_x(z^s)}\circ H^u_x=H^u_{H^s_x(z^s)}\circ D^x_{z^s}.\end{eqnarray}
(Remember that $Hol^s_{x,H^s_x(z^s)}:\W^u(x)\to \W^u(H^s_x(z^s))$.)

Let us define $h^s_x:R^s\times \Ru\to M$ by $$h^s_x(z^s,z^u)=H^u_{H^s_x(z^s)}(D^x_{z^s}(z^u)).$$

We also have here that for a.e. $(z^s,z^u)$, 
\begin{eqnarray}\label{hsviaholonomy}
h^s_x(z^s,z^u)=Hol^s_{x,H^s_x(z^s)}\left( H^u_x(z^u)\right).
\end{eqnarray}

\begin{lem}
For Lebesgue a.e. $(z^s,z^u)$, $h^s_x(z^s,z^u)=h^u_x(z^s,z^u)$.
\end{lem}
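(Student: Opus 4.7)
The plan is to deduce this essentially as a direct corollary of the holonomy commutativity stated in Proposition \ref{prodstructure} (item \eqref{holonomyproduct} of Proposition \ref{resume}), combined with the holonomy formulas \eqref{huviaholonomy} and \eqref{hsviaholonomy} which already translate $h^u_x$ and $h^s_x$ into expressions purely in terms of the holonomies $Hol^u_{x,\cdot}$ and $Hol^s_{x,\cdot}$ applied to points on $\W^s(x)$ and $\W^u(x)$.

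First I would fix a full measure set $\tilde R \subset \Rs \times \Ru$ of pairs $(z^s, z^u)$ for which simultaneously: (a) formula \eqref{huviaholonomy} applies, i.e.\ $h^u_x(z^s,z^u)=Hol^u_{x,H^u_x(z^u)}(H^s_x(z^s))$; (b) formula \eqref{hsviaholonomy} applies, i.e.\ $h^s_x(z^s,z^u)=Hol^s_{x,H^s_x(z^s)}(H^u_x(z^u))$; and (c) the points $a := H^u_x(z^u)$ and $b := H^s_x(z^s)$ both lie in $R$. The existence of such a full measure $\tilde R$ is a Fubini argument: each of (a), (b), (c) holds off a null subset of $\Rs \times \Ru$, using the a.e.\ definability of $Hol^u$ and $Hol^s$ from Proposition \ref{unifholonomy} together with the standing assumption that $R$ meets a.e.\ Lyapunov leaf in a full measure subset.

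Then for any $(z^s,z^u) \in \tilde R$, applying item \eqref{holonomyproduct} of Proposition \ref{resume} with the triple $x$, $a = H^u_x(z^u) \in \W^u(x) \cap R$, and $b = H^s_x(z^s) \in \W^s(x) \cap R$ gives
\[
Hol^s_{x,b}(a) \;=\; Hol^u_{x,a}(b),
\]
which rewrites exactly as
\[
Hol^s_{x, H^s_x(z^s)}\bigl(H^u_x(z^u)\bigr) \;=\; Hol^u_{x, H^u_x(z^u)}\bigl(H^s_x(z^s)\bigr),
\]
and therefore $h^s_x(z^s,z^u) = h^u_x(z^s,z^u)$ by \eqref{huviaholonomy} and \eqref{hsviaholonomy}.

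There is no substantive obstacle here beyond the bookkeeping in the first paragraph; the content of the lemma is really the holonomy commutativity of Proposition \ref{prodstructure}, and the formulas \eqref{huviaholonomy} and \eqref{hsviaholonomy} were designed precisely so that $h^u_x$ and $h^s_x$ become two a.e.\ equal expressions for the single point $\W^u(H^s_x(z^s)) \cap \W^s(H^u_x(z^u))$. The only mildly delicate point is that the definitions of $h^u_x$ and $h^s_x$ (via $H^s_{H^u_x(z^u)} \circ D^x_{z^u}$ and $H^u_{H^s_x(z^s)} \circ D^x_{z^s}$) extend continuously/measurably to all of $\Rs \times R^u$ and $R^s \times \Ru$ respectively, whereas the holonomy formulas only hold a.e.; however, since we only claim a.e.\ equality, restricting to $\tilde R$ is sufficient.
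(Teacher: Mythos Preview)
Your proof is correct and follows exactly the same approach as the paper's proof, which simply states that the lemma is an immediate consequence of item \eqref{holonomyproduct} (Proposition \ref{prodstructure}) together with formulas \eqref{huviaholonomy} and \eqref{hsviaholonomy}. Your explicit Fubini bookkeeping for the full-measure set $\tilde R$ just unpacks what the paper leaves implicit.
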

\begin{proof}
This is an immediate consequence of item (\ref{holonomyproduct}) and formulas (\ref{huviaholonomy}), (\ref{hsviaholonomy}).
\end{proof}

%
%
%
%
%
%
%


Let us denote $h_x=h^s_x=h^u_x$. 

\begin{lem}\label{affineonM}
For $\mu$ a.e. $x$ and for a.e. $(w^s,w^u)\in R^s\times R^u$ there is $L\in \mathcal A_m$ such that if we set $y=h_x(w^s,w^u)$ then 
\begin{enumerate}
\item $L(0,0)=(w^s, w^u)$ and
\item $h_x\circ L=h_y$ a.e.
\end{enumerate}

\end{lem}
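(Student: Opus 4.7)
The plan is to track how each ingredient in the definition of $h_x$ transforms when we shift the basepoint to $y = h_x(w^s, w^u)$. Set $p = H^u_x(w^u) \in \W^u(x)$ and $q = H^s_x(w^s) \in \W^s(x)$, so $y \in \W^u(q) \cap \W^s(p)$; in particular $\W^s(y) = \W^s(p)$ and $\W^u(y) = \W^u(q)$. After intersecting with a set of full measure (Fubini on $R^s \times R^u$, using that $R \cap \W^{s,u}(x)$ has full Lebesgue measure) we may assume $p, q \in R$ and that all holonomies invoked below are defined at the points where we evaluate them.

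First I would apply item (\ref{affineparameterscoherence}) of Proposition \ref{resume} along $\W^s(p) = \W^s(y)$ to obtain $B^s \in \mathcal A_{s}$ with $H^s_y = H^s_p \circ B^s$, and along $\W^u(q) = \W^u(y)$ to obtain $B^u \in \mathcal A_{u}$ with $H^u_y = H^u_q \circ B^u$. Next, item (\ref{changeorientation}) applied to the unstable holonomy $Hol^u_{x,p}\colon \W^s(x) \to \W^s(p)$ yields $D^s \in \mathcal D_{s}$ with $Hol^u_{x,p} \circ H^s_x = H^s_p \circ D^s$, and its analogue for the opposite Weyl chamber gives $D^u \in \mathcal D_{u}$ with $Hol^s_{x,q} \circ H^u_x = H^u_q \circ D^u$.

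Combining these, for a.e. $(z^s, z^u)$ the characterization $h_y(z^s, z^u) \in \W^u(H^s_y(z^s)) \cap \W^s(H^u_y(z^u))$ together with item (\ref{holonomyproduct}) lets me rewrite $h_y(z^s, z^u)$ as $h_x$ evaluated at the pullbacks of $H^s_y(z^s) \in \W^s(p)$ and $H^u_y(z^u) \in \W^u(q)$ to $\W^s(x)$ and $\W^u(x)$ along the unstable and stable holonomies respectively. Substituting the affine descriptions above, these pullbacks read
\[
(H^s_x)^{-1}\bigl(Hol^u_{p,x}(H^s_y(z^s))\bigr) = (D^s)^{-1} B^s(z^s),\qquad
(H^u_x)^{-1}\bigl(Hol^s_{q,x}(H^u_y(z^u))\bigr) = (D^u)^{-1} B^u(z^u),
\]
each affine in its own variable with diagonal linear part, so setting $L(z^s, z^u) = \bigl((D^s)^{-1} B^s(z^s),\, (D^u)^{-1} B^u(z^u)\bigr)$ gives $L \in \mathcal A_m$ with $h_x \circ L = h_y$ a.e. The normalization $L(0,0) = (w^s, w^u)$ follows from $h_y(0,0) = y = h_x(w^s, w^u)$ and the essential injectivity of $h_x$ on a full measure set.

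The main technical point is the bookkeeping to ensure that the set of $(w^s, w^u)$ on which \emph{all} of these identifications simultaneously hold has full Lebesgue measure: we need $p$ and $q$ to be in $R$, the holonomies between $\W^{s,u}(x)$ and $\W^{s,u}(p), \W^{s,u}(q)$ to be defined a.e. on the relevant leaves, and finally for a.e. $(z^s, z^u)$ the derived holonomies out of $\W^{s,u}(y)$ to be defined at $H^{s,u}_y(z^{s,u})$. Each requirement is full measure by Proposition \ref{resume} and Fubini, and they can be intersected; this is the only part that requires care.
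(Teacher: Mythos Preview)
Your strategy is the right one and uses the same ingredients as the paper, but the central step has a genuine gap. You write that the ``characterization'' $h_y(z^s,z^u)\in\W^u(H^s_y(z^s))\cap\W^s(H^u_y(z^u))$, together with item~(\ref{holonomyproduct}), lets you identify $h_y(z^s,z^u)$ with $h_x$ at the holonomy pullbacks. But this intersection is \emph{not} a characterization: in the nonuniform (and even in the linear toral) setting, $\W^u(b)\cap\W^s(c)$ is typically infinite. Both $h_y(z^s,z^u)=Hol^s_{y,b}(c)$ and $h_x(a^s,a^u)=Hol^s_{x,b'}(c')$ land in $\W^u(b)\cap\W^s(c)$, yet they are produced by holonomies based at points ($y$ and $x$) lying on \emph{different} stable leaves, so equality requires a holonomy--composition identity that you neither state nor derive from item~(\ref{holonomyproduct}). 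A second, smaller issue: your justification of $L(0,0)=(w^s,w^u)$ via ``essential injectivity of $h_x$'' is circular, since injectivity (Corollary~\ref{deck}) is proved \emph{from} this lemma. That normalization can be read off directly from your affine formulas: $B^s(0)=(H^s_p)^{-1}(y)=D^s\,(H^s_x)^{-1}(q)=D^s w^s$, and similarly for $u$.

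The paper closes exactly this gap by a two--step reduction that keeps all comparisons on a single leaf. First it takes $a=h_x(w^s,0)=H^s_x(w^s)\in\W^s(x)$ and computes $h^s_a$ directly from the definition: $h^s_a(z^s,z^u)=H^u_{H^s_a(z^s)}(D^a_{z^s}z^u)=H^u_{H^s_x(Bz^s)}(D^a_{z^s}z^u)=h^s_x(Bz^s,Dz^u)$ with $D=(D^x_{Bz^s})^{-1}D^a_{z^s}$. The only nontrivial point is that $D$ is independent of $z^s$; this is checked by an explicit holonomy computation giving $D=(H^u_x)^{-1}\circ Hol^s_{a,x}\circ H^u_a=D_{a,x}$. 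Then the general $y=h_x(w^s,w^u)=h_a(0,D^{-1}w^u)$ follows by applying the same argument with the roles of $s$ and $u$ interchanged. Because each step moves along a single stable or unstable leaf of the basepoint, the intersection ambiguity never arises. If you want to salvage your direct argument, you must supply the missing composition identity showing $Hol^s_{y,b}(c)=Hol^s_{x,b'}(c')$; otherwise, adopt the two--step reduction.
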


\begin{proof}
Let $a=h_x(w^s,0)$. Let us prove first the proposition for $x$ and $a$. By item (\ref{affineparameterscoherence}) we have that there is $B\in \mathcal A_s$ such that $H_a^s=H_x^s\circ B$, $B(0)=w^s$. Hence by definition of $h^s$ we get that  
\begin{eqnarray*}
h^s_a(z^s,z^u)&=&H^u_{H^s_a(z^s)}(D^a_{z^s}(z^u))=H^u_{H^s_x(Bz^s)}(D^a_{z^s}(z^u))\\
&=&H^u_{H^s_x(Bz^s)}(D^x_{Bz^s}(Dz^u))=h^s_x(Bz^s,Dz^u).
\end{eqnarray*}
Where $D=(D^x_{Bz^s})^{-1}D^a_{z^s}$. We need to see that this $D$ does not depend on $z^s$.

From the definition of $D^x_{Bz^s}$, item (\ref{changeorientation}) and formula (\ref{defDs}) we know that 
$$Hol^s_{x,H^s_x(Bz^s)}\circ H^u_x=H^u_{H^s_x(Bz^s)}\circ D^x_{Bz^s}.$$

Hence $$(D^x_{Bz^s})^{-1}=(H^u_x)^{-1}\circ (Hol^s_{x,H^s_x(Bz^s)})^{-1}\circ H^u_{H^s_x(Bz^s)}$$
and similarly 

$$D^a_{z^s}= (H^u_{H^s_a(z^s)})^{-1}\circ Hol^s_{a,H^s_a(z^s)}\circ H^u_a.$$

Since $H^s_a(z^s)=H^s_x(Bz^s)$ and for any $b$, $Hol^s_{b,x}\circ Hol^s_{a,b}=Hol^s_{a,x}$ we get that 

\begin{eqnarray*}D=(D^x_{Bz^s})^{-1}D^a_{z^s}&=&(H^u_x)^{-1}\circ (Hol^s_{x,H^s_x(Bz^s)})^{-1}\circ Hol^s_{a,H^s_a(z^s)}\circ H^u_a\\&=&(H^u_x)^{-1}\circ Hol^s_{H^s_a(z^s),x}\circ Hol^s_{a,H^s_a(z^s)}\circ H^u_a\\&=&(H^u_x)^{-1}\circ Hol^s_{a,x}\circ  H^u_a = D_{a,x}.
\end{eqnarray*}

So, take $L=(B,D)\in\mathcal A_m$ in this case.

Now the general case of the Lemma follows from the observation that $y=h_x(w^s,w^u)=h_a(0,D^{-1}w^u)$ and applying the previous proof interchanging $u$ and $s$. \end{proof}

We have as an immediate corollary of the previous Lemma:
\begin{cor}\label{deck}
There is a set of full measure $R\subset \mathbb R^m$ such that if $(z^s,z^u), (w^s,w^u)\in R$ and $h_x(z^s,z^u)=h_x(w^s,w^u)$ then there is $L\in \mathcal A_m$ such that $h_x\circ L=h_x$ a.e.
\end{cor}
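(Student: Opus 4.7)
The plan is to obtain the corollary almost directly from Lemma~\ref{affineonM} by a two-step application: if two parameter points $(z^s,z^u)$ and $(w^s,w^u)$ both map to the same point $y \in M$, then each of them provides, via Lemma~\ref{affineonM}, an affine map in $\mathcal A_m$ identifying $h_x$ (near that point) with $h_y$. Composing one with the inverse of the other will yield the required deck transformation.

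Concretely, I would first let $R_0 \subset R^s \times R^u$ be the set of full Lebesgue measure supplied by Lemma~\ref{affineonM}, on which the conclusion of that lemma holds. Take $R \subset R_0$ to be the (still full measure) subset of points $(w^s,w^u)$ for which moreover the equality $h_x \circ L_{(w^s,w^u)} = h_y$ holds on a full measure subset that is preserved under the affine reparametrization below (this is a routine Fubini-type measurability reduction, since $\mathcal A_m$ acts on $\mathbb R^m$ preserving Lebesgue measure up to a constant Jacobian). Suppose now $(z^s,z^u),\, (w^s,w^u)\in R$ satisfy $h_x(z^s,z^u)=h_x(w^s,w^u)=:y$. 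By Lemma~\ref{affineonM} applied twice, there exist $L_1,L_2\in \mathcal A_m$ with
\[
L_1(0,0)=(z^s,z^u),\qquad L_2(0,0)=(w^s,w^u),
\]
and $h_x\circ L_1 = h_y$, $h_x\circ L_2 = h_y$ almost everywhere.

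Setting $L := L_2 \circ L_1^{-1} \in \mathcal A_m$, we then obtain
\[
h_x \circ L \;=\; h_x \circ L_2 \circ L_1^{-1} \;=\; h_y \circ L_1^{-1} \;=\; h_x
\]
almost everywhere, which is exactly the desired statement. Note that as a byproduct $L$ takes $(z^s,z^u)$ to $(w^s,w^u)$, so $L$ is a genuine nontrivial element of an affine ``group of deck transformations'' of the map $h_x$ whenever the two parameter points differ.

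The only non-trivial point is bookkeeping of the null sets: the identity $h_x\circ L = h_x$ holds only almost everywhere, so one must check that the null set on which $h_x\circ L_1 = h_y$ fails, transported by $L_1^{-1}$ and then further by $L_2$, still gives a null set in $\mathbb R^m$. This follows because affine maps in $\mathcal A_m$ preserve the Lebesgue measure class; choosing $R$ inside $R_0$ to avoid countably many such null sets (one for each pair of affine maps arising from Lemma~\ref{affineonM} on a countable dense subset, extended by measurability) gives a full measure set on which the argument above is legitimate. No step requires more than routine measure-theoretic care, so there is no real obstacle beyond this bookkeeping.
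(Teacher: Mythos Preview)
Your approach is correct and is exactly what the paper intends: it states the corollary as an ``immediate consequence of the previous Lemma'' (Lemma~\ref{affineonM}), and the two-step application $L=L_2\circ L_1^{-1}$ you describe is precisely that immediate deduction. Your null-set bookkeeping is more elaborate than necessary---since every $L\in\mathcal A_m$ sends Lebesgue null sets to null sets, the equalities $h_x\circ L_1=h_y$ and $h_x\circ L_2=h_y$ (each a.e.) compose directly to $h_x\circ L=h_x$ a.e.\ without any countable-dense-subset argument---but this overcaution does no harm.
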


Let $\Gamma_x$ be the group of $L\in \mathcal A_m$ such that $$h_x(L(z^s,z^u))=h_x(z^s,z^u)$$ for Lebesgue  a.e. $(z^s,z^u)\in\Rs\times\Ru=\Rm$. $\Gamma_x$ should be thought as the group of deck transformations of the "covering" $h_x$. We call $\Gamma_x$ the {\it Homoclinic Group} since there is a correspondence between the points in $\W^u(x)\cap \W^s(x)$ and $\Gamma_x$. It is a nice experience for the reader to understand the previous construction in the case of a hyperbolic automorphism of $\mathbb T^2_{\pm}$


We consider $\Rm$ with its natural additive group structure and let $\lambda$ be Haar (=Lebesgue) measure on $\Rs\times\Ru=\mathbb R^m$. It is the product of Haar measure  on $\Rs$ and Haar on $\Ru$.
%
%
%

\begin{lem}\label{discont}
For Lebesgue a.e. $\bar z=(z^s,z^u)$ there is $c_x(\bar{z})>0$ and for any $\epsilon>0$ there is $\delta>0$ and a set $K_{\epsilon}(\bar z)\subset B_{\delta}(\bar z)$, such that 
\begin{enumerate}
\item $$\frac{\lambda(K_{\epsilon}(\bar z)\cap B_{\delta}(\bar z))}{\lambda(B_{\delta}(\bar z))}\geq 1-\epsilon$$
\item $h_x|_{K_{\epsilon}(\bar z)}$ is $1-1$
\item $\mu (h_x(A))=c_x(\bar{z})\lambda(A)$ for any measurable $A\subset K_{\epsilon}(\bar z)$. 

\end{enumerate}

\end{lem}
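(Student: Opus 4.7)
The strategy is to reduce via Lemma~\ref{affineonM} to the case $\bar z=(0,0)$ and then show that on a Pesin block, after the identification of affine parameters with conditional measures, $h_y$ pushes Lebesgue onto a constant multiple of $\mu$.

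Set $y=h_x(\bar z)$. By Lemma~\ref{affineonM}, for a.e.\ $\bar z$ there is $L\in\mathcal A_m$ with $L(0,0)=\bar z$ and $h_x\circ L=h_y$ almost everywhere. Since $L$ is an affine bijection of $\Rm$ with constant Jacobian $|\det L|$ and diagonal linear part, injectivity and (relative) Lebesgue-density statements, as well as the measure identity in~(3), transfer between $h_x$ near $\bar z$ and $h_y$ near the origin. It therefore suffices to prove the lemma at the origin for $h_y$, for $\mu$-a.e.\ $y$; the $K_\epsilon(\bar z)$ and $c_x(\bar z)=c_y(0,0)/|\det L|$ are then obtained by transport through $L$.

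Fix such a $y$ and choose a Pesin block $P\ni y$ of $\mu$-measure $>1-\epsilon^2$ on which the affine parameters $H^{s/u}_\cdot$, the normalizing scalars $c^s,c^u$ relating $(H^{s/u})_*\lambda$ to the conditional measures $m^{s/u}$ (obtained by integrating the conditional measures, cf.~\cite[Lemma~3.2]{KK}), and the Pesin local product map are all continuous, and the local product map is a homeomorphism onto a Pesin box. In these local product coordinates, $h_y(z^s,z^u)$ is precisely the point with stable coordinate $H^s_y(z^s)$ and unstable coordinate $H^u_y(z^u)$. Combining absolute continuity of $\mu$ (from~\cite{KKRH}) with the holonomy invariance of conditional measures from Proposition~\ref{unifholonomy} (unstable holonomies are affine with diagonal linear part in affine parameters, hence scale $m^s$ by a constant per pair of leaves, and symmetrically for stables), one shows that on the Pesin box the measure $\mu$ factors as $c(y)\,dm^s_y\otimes dm^u_y$ with a constant $c(y)>0$. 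A direct change of variables then gives $(h_y)_*\lambda=c_y(0,0)^{-1}\,d\mu$ on $h_y^{-1}$(Pesin box), where $c_y(0,0)=c(y)/(c^s(y)c^u(y))$. Pulling back yields $\tilde P:=h_y^{-1}(P)\subset\Rm$ with $(0,0)$ a Lebesgue density point; take $\delta>0$ so small that $\lambda(\tilde P\cap B_\delta)\geq(1-\epsilon)\lambda(B_\delta)$ and set $K_\epsilon:=\tilde P\cap B_\delta$. Injectivity of $h_y$ on $K_\epsilon$ is the Pesin-box homeomorphism property together with injectivity of affine parameters on each leaf, and conclusion~(3) is exactly the constant-density computation above.

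The main obstacle is the passage from absolute continuity of $\mu$ (with a measurable local density) to the genuinely \emph{constant} density $c(y)$ in the product decomposition $\mu=c(y)\,dm^s_y\otimes dm^u_y$ on the Pesin box; without this, conclusion~(3) would only hold up to a small multiplicative error on $K_\epsilon$. Achieving true constancy requires using both halves of the holonomy symmetry from Propositions~\ref{unifholonomy} and~\ref{prodstructure}: the affine, diagonal-linear character of stable and unstable holonomies in affine parameters forces the transverse density to be simultaneously invariant under stable and unstable holonomies. On a Pesin block these two families of invariances are jointly transitive (by the standard Hopf-type argument in the local product structure), so the density must be a constant; this constancy then propagates to the level of $(h_y)_*\lambda$ versus $d\mu$ and delivers the exact equality in~(3).
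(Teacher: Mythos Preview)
Your proof is correct and follows essentially the same approach as the paper. Both reduce via Lemma~\ref{affineonM} (and Corollary~\ref{deck}) to the origin, take $K_\epsilon$ to be the preimage of a Pesin set of nearly full measure intersected with a small ball, and obtain item~(3) from the fact that the conditional measures along stables and unstables are Haar in affine coordinates together with holonomy invariance, which forces the local product density to be constant; your treatment simply elaborates the constancy argument that the paper summarizes in one sentence.
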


\begin{proof}
By Lemma \ref{affineonM} and Corollary \ref{deck}, it is enough to prove the Lemma when $\bar z=(0,0)$. Given $\epsilon>0$, since $x$ is a regular point it belongs to some Pesin set, and we may assume it is a density point of a Pesin set, hence we can take $\delta$ small so that for $K_\epsilon$ equal the pre-image by $h_x$ of the  Pesin set intersected with the ball of radius $\delta$ around $0$  items (1) and (2) hold. 

For item (3) notice that the conditional measure of $\mu$ along stables and unstables is Haar measure (with some normalization). Hence, by holonomy invariance of the conditional measures, we have that, locally on Pesin sets, the measure $\mu$ is the product of Haar on stable and Haar on unstable which gives Haar in affine coordinates $h_x$. \end{proof}
%
%


Applying item (\ref{linear}) of Proposition \ref{resume} both for $s$ and $u$, the definition of $h_x$ and Oseledcs Theorem we get the following:

\begin{lem}\label{quasconj}
For $\mu$-a..e. point $x$ we get,
\begin{enumerate}
\item There is a cocycle $\hat{\alpha}_x:\mathbb{Z}^{m-1}\times R\to \mathcal D_m$ such that $$h_{\an(x)}\circ\hat{\alpha}_x(\bn)= \an\circ h_x,$$ $R\subset \Rm$ is a full Lebesgue measure set. 
\item $$|\hat\alpha_x(k\bn)|^{1/k}\to D(\bn)=\diag(\exp\chi_1(\bn),\dots,\exp\chi_m(\bn)),$$ as $k\to\pm\infty$, where for a diagonal matrix $D$, $|D|$ the matrix with entries its absolute value and $|D|^{1/k}$ is its real positive $k$-th root. 

\item  For any $\bn\in\mathbb{Z}^{m-1}$, $$\hat{\alpha}_x(\bn)\Gamma_x=\Gamma_{\an(x)}\hat{\alpha}_x(\bn).$$
\end{enumerate}
\end{lem}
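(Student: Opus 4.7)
The plan is to package together the stable and unstable cocycles supplied by item (\ref{linear}) of Proposition \ref{resume} into a single $\mathcal D_m$-valued cocycle over $\Zmm$, and then verify the three claims separately. Write $\Rm=\Rs\times\Ru$ in the splitting used to define $h_x$, and let $\hat\alpha^\cC$ and $\hat\alpha^{-\cC}$ be the $\mathcal D_{s}$- and $\mathcal D_u$-valued cocycles over the base action $\a$ furnished by item (\ref{linear}) for stables and unstables respectively. Set
\[
\hat\alpha_x(\bn)=\hat\alpha^\cC_x(\bn)\oplus\hat\alpha^{-\cC}_x(\bn)\in\mathcal D_m.
\]
The cocycle identity over the base action $\a$ is inherited coordinatewise, so $\hat\alpha_x$ is indeed a $\mathcal D_m$-valued cocycle on a full measure set, and shrinking the set $R\subset\Rm$ finitely many times preserves fullness.

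For claim (1), first consider any $(z^s,0)$: by the equivariance along stables we have $\an\circ H^s_x=H^s_{\an(x)}\circ\hat\alpha^\cC_x(\bn)$, and similarly along unstables for $(0,z^u)$. To treat a general $(z^s,z^u)$, use the product formula $h_x(z^s,z^u)=Hol^u_{x,H^u_x(z^u)}(H^s_x(z^s))$ (equation (\ref{huviaholonomy})). Because $\an$ commutes with unstable holonomy and sends $\w^s$ to $\w^s$, applying $\an$ transforms the unstable holonomy from $x$ to $H^u_x(z^u)$ into the unstable holonomy from $\an(x)$ to $\an(H^u_x(z^u))$, and then the stable-direction equivariance plus the definition (\ref{defDs}) of $D^x_{z^u}$ shows that the diagonal blocks combine cleanly to give $\an\circ h_x=h_{\an(x)}\circ\hat\alpha_x(\bn)$ almost everywhere. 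This is the only moderately delicate step, since one must confirm that conjugating the cocycle $D^x_{z^u}$ by $\an$ produces $D^{\an(x)}_{\hat\alpha^{-\cC}_x(\bn)z^u}$; this follows from naturality of the holonomy under $\an$ together with uniqueness of the affine parameters.

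For claim (2), since $\hat\alpha_x$ is diagonal, $|\hat\alpha_x(k\bn)|^{1/k}$ is diagonal with the $j$th entry equal to $|\lambda^{(k\bn)}_j(x)|^{1/k}$, where $\lambda^{(k\bn)}_j$ comes from either the stable or unstable component. These diagonal entries are precisely the norms of $D\an^{k}$ restricted to the Lyapunov subspaces in the frame of item (\ref{normalization}) of Proposition \ref{resume}. Oseledets' theorem applied to the action $\a$ at the regular point $x$ then yields
\[
\lim_{k\to\pm\infty}|\hat\alpha_x(k\bn)|^{1/k}=\diag\bigl(\exp\chi_1(\bn),\dots,\exp\chi_m(\bn)\bigr)=D(\bn).
\]

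For claim (3), let $L\in\Gamma_x$, so $h_x\circ L=h_x$ a.e. Composing with $\an$ and using claim (1) gives
\[
h_{\an(x)}\circ\hat\alpha_x(\bn)\circ L=\an\circ h_x\circ L=\an\circ h_x=h_{\an(x)}\circ\hat\alpha_x(\bn)
\]
a.e., hence $\hat\alpha_x(\bn)\circ L\circ\hat\alpha_x(\bn)^{-1}\in\Gamma_{\an(x)}$. Running the same argument with $\bn$ replaced by $-\bn$ (equivalently using the inverse cocycle) yields the reverse inclusion, so $\hat\alpha_x(\bn)\Gamma_x\hat\alpha_x(\bn)^{-1}=\Gamma_{\an(x)}$, which is the stated conjugation relation. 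I expect the only real obstacle to be keeping careful track of null sets and the dependence of $D^x_{z^s},D^x_{z^u}$ on the base point under the action, which is a bookkeeping matter once item (\ref{changeorientation}) and the product structure are in hand.
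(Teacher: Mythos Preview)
Your proposal is correct and follows essentially the same route as the paper, which merely says the lemma follows from item (\ref{linear}) of Proposition \ref{resume} applied to both $\cC$ and $-\cC$, the definition of $h_x$, Oseledets' theorem, and (for item (3)) the definition of $\Gamma_x$; your write-up is a faithful expansion of that sketch. One small slip: you cite equation (\ref{defDs}) for $D^x_{z^u}$, but (\ref{defDs}) in fact defines $D^x_{z^s}$; the defining relation for $D^x_{z^u}$ is the unlabelled display just above.
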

The last assertion follows from the definition of $\Gamma_x$ and Corollary \ref{deck}.

\begin{prop}\label{the infratorus}
For $\mu$-a.e. point $x\in M$, $\Gamma_x$ contains a normal subgroup of finite index isomorphic to $\Zm$ acting by translations on $\Rm$.
\end{prop}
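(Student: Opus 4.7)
The plan is to analyze the short exact sequence
\[
1 \;\longrightarrow\; T_x \;\longrightarrow\; \Gamma_x \;\xrightarrow{\;\pi\;}\; \pi(\Gamma_x) \;\longrightarrow\; 1,
\]
where $\pi:\mathcal A_m \to \mathcal D_m$ is the linear-part homomorphism and $T_x = \ker\pi$ consists of pure translations inside $\Gamma_x$. The conclusion of the proposition follows once $T_x$ is shown to be a full-rank lattice in $\Rm$: normality is automatic since $T_x$ is a kernel, and finiteness of $\pi(\Gamma_x) = \Gamma_x/T_x$ is then forced by the covolume identity $\mathrm{vol}(\Rm/T_x) = |\pi(\Gamma_x)|\cdot\mathrm{vol}(\Rm/\Gamma_x)$ once both covolumes are known to be finite.

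I would begin with three preliminary facts. \emph{Discreteness} of $\Gamma_x$ in $\mathcal A_m$: if $L_n\in\Gamma_x$ converges to $\mathrm{id}$, pick a typical $\bar z$ and the set $K_\varepsilon(\bar z)$ of Lemma~\ref{discont} on which $h_x$ is injective with large density; for $n$ large, $L_n(K_\varepsilon(\bar z))\cap K_\varepsilon(\bar z)$ has positive measure and $h_x\circ L_n=h_x$ forces $L_n z=z$ on that set, so the affine map $L_n$ is the identity. \emph{Unimodularity}: every $L=(D,b)\in\Gamma_x$ has $|\det D|=1$, obtained by pushing Lebesgue from a small $A\subset K_\varepsilon(\bar z)$ and from $L(A)$ and comparing using Lemma~\ref{discont}(3). \emph{Finite covolume}: by Corollary~\ref{deck} the map $h_x$ descends to a measurable a.e.\ bijection from $\Rm/\Gamma_x$ onto a full-measure subset of $M$, and since $h_x$ locally pushes Lebesgue to $c_x\mu$, one gets $\mathrm{vol}(\Rm/\Gamma_x) = \mu(M)/c_x < \infty$. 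A brief conjugation computation also shows that $L=(D,b)\in\Gamma_x$ conjugates the translation by $v\in T_x$ to the translation by $Dv$, so $\pi(\Gamma_x)$ preserves $T_x$ as a subset of $\Rm$.

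The main obstacle is proving that $T_x$ has rank $m$. For this I would combine Lemma~\ref{quasconj} with Poincar\'e recurrence. Choose $\bn$ in a Weyl chamber and use the weak-mixing/Bernoulli conclusion of Lemma~\ref{weakmixing2} to obtain, for a set of full measure of $x$, a Pesin set $P$ of arbitrarily large measure containing $x$ and a return sequence $\a(\bn_k)(x)\to x$ inside $P$. Along such a sequence the cocycle values $\hat\alpha_x(\bn_k)$ depend measurably on $x$ and remain bounded on $P$, while the conjugation identity $\hat\alpha_x(\bn_k)\,\Gamma_x\,\hat\alpha_x(\bn_k)^{-1}=\Gamma_{\a(\bn_k)(x)}$ combined with continuity of $y\mapsto\Gamma_y$ on Pesin sets (inherited from the measurable families $H^\cC_y$ of Proposition~\ref{resume}) shows that limits of $\hat\alpha_x(\bn_k)$ normalize a limit of $\Gamma_x$. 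Together with the homoclinic interpretation of $\Gamma_x$ (the correspondence with points in $\mathcal W^u(x)\cap\mathcal W^s(x)$ noted after Corollary~\ref{deck}), iterating over Weyl chambers produces, for each Lyapunov line $E^{\chi_i}$, a nontrivial translation of $T_x$ along that direction; general position of the exponents then guarantees these translations span $\Rm$, so $T_x$ has rank $m$. The finite covolume of $\Gamma_x$ established above upgrades ``$T_x$ contains a rank-$m$ sublattice'' to ``$T_x$ itself is a rank-$m$ lattice.''

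Once $T_x\cong\Zm$ is a full-rank lattice, the covolume identity $|\pi(\Gamma_x)|=\mathrm{vol}(\Rm/T_x)/\mathrm{vol}(\Rm/\Gamma_x)$ is finite, so $\pi(\Gamma_x)$ is a finite group and $T_x$ is a normal finite-index subgroup of $\Gamma_x$ acting by translations, completing the proof.
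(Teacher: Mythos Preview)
Your outline correctly identifies the translation kernel $T_x=\ker\pi$ as the object to study, and the preliminary facts (discreteness, unimodularity) are in the right spirit. However the heart of the argument --- showing $T_x$ has full rank $m$ --- does not go through as written.

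First, the claim that the cocycle values $\hat\alpha_x(\bn_k)$ ``remain bounded on $P$'' along a return sequence $\bn_k\to\infty$ is simply false: by Lemma~\ref{quasconj}(2) one has $|\hat\alpha_x(k\bn)|^{1/k}\to D(\bn)$, so these diagonal matrices blow up exponentially regardless of which Pesin set the orbit visits. Second, and more seriously, the homoclinic correspondence produces \emph{affine} elements of $\Gamma_x$, not pure translations: a homoclinic point gives an $L\in\Gamma_x$ whose linear part encodes the unstable and stable holonomy dilations $D_{x,y}$ from Proposition~\ref{resume}(\ref{changeorientation}), and there is no reason these should be the identity. So the assertion that ``iterating over Weyl chambers produces, for each Lyapunov line, a nontrivial translation of $T_x$ along that direction'' is unjustified. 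In particular your argument never excludes the possibility $T_x=\{0\}$.

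The paper's proof avoids this difficulty by a completely different mechanism. Let $d=\dim E(x)$ where $E(x)$ is the linear span of $T_x$; by ergodicity $d$ is a.e.\ constant. To rule out $0<d<m$ one uses the covolume $v(x)=\mathrm{vol}(E(x)/T_x)$ as a measurable function: since the Lyapunov exponents are in general position, one can choose $\bn$ so that the asymptotic linear part $D(l\bn)$ expands $d$-dimensional volume along $E(x)$, forcing $v(\a(l\bn)x)\to\infty$ and contradicting Poincar\'e recurrence. To rule out $d=0$ one shows that then $\Gamma_x$ is abelian (it embeds in $\mathcal D_m$), and a separate fixed-point and recurrence argument forces $\Gamma_x=\{\mathrm{id}\}$; but then $h_x$ is an a.e.\ bijection and the pushforward of $\mu$ is a \emph{finite} invariant measure on $\Rm$ for an affine $\Zmm$ action with positive entropy, which is impossible. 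This entropy contradiction is the key idea missing from your proposal. Finally, once $d=m$, finiteness of $\Gamma_x/T_x$ is obtained not from a covolume identity (your finite-covolume preliminary presupposes that $\Rm/\Gamma_x$ is already a reasonable quotient) but from the fact that $\Gamma_x/T_x$ acts on the torus $\Rm/T_x$ without recurrence, again via Lemma~\ref{discont}.
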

\begin{proof}
Given $x$ let $Tr_x\subset\Gamma_x$ be the normal subgroup of translations in $\Gamma_x$. We always regard $\Rm$ with the standard inner product. Let $E(x)\subset\Rm$ be the vector space generated by the translations in $Tr_x$. By Lemma \ref{discont} we have that $Tr_x$ is discrete and hence the quotient $E(x)/Tr_x$ is a torus. Let $v(x)$ be the volume of $E(x)/Tr_x$. Notice that $y\to v(y)$ is a measurable map. Indeed, by Lemma \ref{affineonM} we have that for a.e. $y$ there is a $L_{x,y}$ such that $h_x\circ L_{x,y}=h_y$, and by the construction we get that we can choose $L_{x,y}$ in such a way that $x\to L_{x,y}$ is measurable. Let $D_{x,y}$ be the linear part of $L_{x,y}$. Some linear algebra gives that $D_{x,y}Tr_y=Tr_x$ which gives the measurability of $y\to v(y)$. 

On the other hand, by Lemma \ref{quasconj} and arguing as before we have that for any $\bn\in\mathbb{Z}^{m-1}$ and a.e. $x\in M$, $\hat{\alpha}_x(\bn)Tr_x=Tr_{\an(x)}$. Let $D(\bn)=\diag(\exp\chi_1(\bn),\dots,\exp\chi_m(\bn))$.

Let $y$ be a typical point. Let $d=\dim E(y)$. It is clear from the previous analysis and ergodicity of $\alpha$ that $d$ does not depend on $y$. Let us assume by contradiction that $0<d<m$ then, since the Lyapunov exponents of $\a$ are in general position we can chose an element $\bn\in\Zmm$ such that for $l\to+\infty$ the action of the iterates $D(l\bn)$ in the $d$th exterior product $\Lambda^d(\Rm)$ expands the volume element of $E(y)$ exponentially. Since the cocycle $\hat\a_y(l\bn)$ is asymptotically $D(l\bn)$ we have that  $\hat\a(l\bn)_y$ also expands the volume element of $E(y)$ exponentially. Hence we get that for $l\to+\infty$, $v(\a(l\bn)(y))$ tends to infinity contradicting that by recurrence $\a(l\bn)(y)$ has to return to a region where $v(y)$ is finite. 

Hence we get that either $d=0$ or $d=m$. 

If $d=0$ then $\Gamma_x$ has no translation part and hence is abelian (consider the homomorphism $D_0:\Gamma_x\to\mathcal D_m$, $D_0L=$ derivative of $L$ at $0$). So, $\Gamma_x$ is conjugated by a translation to the action of a diagonal subgroup on $\Rm$. 

Let $F(x)=Fix(\Gamma_x)$ be the set of points fixed by all the elements in $\Gamma_x$. Observe that we have that $F(x)$ is an affine subspace parallel to the coordinate axes. We shall show that $0\in F(x)$ for $\mu$-a.e. $x$ (i.e. $F(x)$ is a linear subspace) and then reach a contradiction.

From Lemma \ref{quasconj} we get that $F(\an(x))=\hat{\alpha}_x(\bn)F(x)$. Since $\hat{\alpha}_x(\bn)$ is diagonal and by ergodicity of $\an$ we get that there is a linear subspace $F$ paralell to the axes, independent of $x$ and a unique vector $p(x)$ perpendicular to $F$ such that $F(x)=p(x)+F$ for $\mu$-a.e. $x$. Moreover $p(x)$ is measurable, $\hat{\alpha}_x(\bn)F=F$ and $\hat{\alpha}_x(\bn)p(x)=p(\an(x))$ for every $\bn\in\mathbb{Z}^{m-1}$. Since by Lemma \ref{quasconj} the cocycle $\hat{\alpha}_x(\bn)$ is diagonal and asymptotically $D(\bn)=\diag(\exp\chi_1(\bn),\dots,\exp\chi_m(\bn))$ we get, working with each coordinate of $p(x)$ at a time that $p(x)=0$. 

So we have that $F(x)=F$ is a linear subspace independent of $x$ and $0\in F=F(x)$. In particular $\Gamma_x\subset \mathcal D_m$. Since $h_x|\Rs\times\{0\}$ coincides with the affine parameter along the stable manifold of $x$ we get that $\Rs\times\{0\}\subset F$, similarly we get that $\{0\}\times\Ru\subset F$ and hence $F=\Rm$, i.e. $\Gamma_x=\{id\}$ is trivial.

In particular, by Corollary \ref{deck} we get that $h_x:\Rm\to M$ is 1-1 Lebesgue a.e. Let $\nu=(h_x)_*\mu$. By Lemma \ref{discont}, $\nu$ is a a probability measure equivalent to Lebesgue measure and invariant by the action $\alpha_0(\bn):=h_x^{-1}\circ\an\circ h_x$. By Lemmas \ref{affineonM} and \ref{quasconj} we have that $\alpha_0(\bn)$ is affine for every $\bn$. But this is a contradiction since affine maps on $\Rm$ do not admit positive entropy invariant probability measures but $(\alpha_0(\n\bn),\nu)$ is measurably isomorphic through $h_x$ to $(\an,\mu)$.

So we get that $d=m$ and hence $E(x)=\Rm$. Recall that from Lemma \ref{discont} we know that $Tr_x$ is discrete. Let us take a linear map and conjugate $Tr_x$ to $\Zm$ and $\Gamma_x$ to $\Gamma$. Since $Tr_x$ is normal in $\Gamma_x$ then we have that $\Zm$ is normal in $\Gamma$. Hence we have that $\hat\Gamma=\Gamma/\Zm$ is identified with a subgroup of affine maps on the torus $\Tm=\Rm/\Zm$, and $\Rm/\Gamma_x\sim\Rm/\Gamma=\Tm/\hat\Gamma$. Again, using Lemma \ref{discont} we have that $\hat\Gamma$ cannot have any recurrence and hence it has to be finite, finishing the proof.
\end{proof}

So we get that $\Rm/\Gamma_x$ is a well defined orbifold. By Corollary \ref{deck} we get that $h_x:\Rm/\Gamma_x\to M$ is an isomorphism.

Let $\a_0:\mathbb{Z}^{m-1}\to Aff(\Rm/\Gamma_x)$ be the abelian action defined by conjugating $\an$ with $h_x$
\begin{eqnarray}\label{conjugacy}
h_x\circ\a_0(\bn)=\an\circ h_x
\end{eqnarray}
for any $\bn\in\Zmm$. Let $\nu=(h_x)_*\mu$ be the pullback measure. 
\begin{cor}\label{orbifold}
$\Gamma_x$ is isomorphic either to $\Zm$ or to $\Zm\ltimes\{\pm id\}$, $\nu=\lambda$ is Haar measure (or projected Haar measure) on $L:= \Rm/\Gamma_x$ and $h:=h_x$ is a measurable conjugacy between $(\a_0,\lambda)$ and $(\a,\mu)$.
\end{cor}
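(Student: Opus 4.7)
Plan. Everything needed is essentially in place: the corollary wraps up Proposition \ref{the infratorus} with one algebraic input and a standard measure-theoretic calculation. From Proposition \ref{the infratorus}, the translation subgroup $Tr_x\cong\Zm$ is a finite index normal subgroup of $\Gamma_x$; after a linear change of coordinates I take $Tr_x=\Zm$. By Lemma \ref{affineonM}, $\Gamma_x\subset\mathcal A_m$, so the finite quotient $\hat\Gamma:=\Gamma_x/\Zm\subset\Aff(\Tm)$ consists of affine maps with diagonal linear parts. Two elements of $\Gamma_x$ with the same diagonal linear part differ by a pure translation and so by an element of $Tr_x$, so $\hat\Gamma$ maps isomorphically onto its set $S$ of linear parts in $\{\pm1\}^m$ (finite order diagonal).

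To show $S\subset\{\pm\Id\}$, I use first that $S$ is a.e.\ independent of the base point: the conjugation relation $\hat\a_x(\bn)\Gamma_x\hat\a_x(\bn)^{-1}=\Gamma_{\an(x)}$ of Lemma \ref{quasconj}(3) preserves linear parts because $\hat\a_x(\bn)$ is diagonal and hence commutes with every $\pm1$-diagonal, so $S$ is an $\a$-invariant measurable function of $y$ and constant a.e.\ by ergodicity. Second, each $\epsilon\in S$ realizes, in the integer coordinates of $\Tm=\Rm/\Zm$, as an element of $GL(m,\Z)$ that centralizes the integer linear parts of the Cartan action $\a_0$; the arithmetic description of Cartan actions in \cite{AKSKKS} (recalled in Corollary \ref{coro1}) presents those linear parts as units of a totally real degree-$m$ number field $K$, whose centralizer in $\mathrm{End}_\Z(\Zm)$ is $\mathcal O_K$ itself, and whose torsion units are just $\{\pm1\}$. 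Hence $S\subset\{\pm\Id\}$. So either $\hat\Gamma$ is trivial and $\Gamma_x=\Zm$, or $\hat\Gamma=\{\pm\Id\}$ and, after a translation of coordinates that puts the chosen lift of $-\Id$ into $\Gamma_x$ with zero translation part, $\Gamma_x=\Zm\rtimes\{\pm\Id\}$.

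For $\nu=\lambda$, Lemma \ref{discont}(3) says $h_x$ is locally $(\lambda,\mu)$-measure-preserving up to a constant on each Pesin box. This uses two inputs from Section \ref{affinestructure}: conditional measures of $\mu$ along every Lyapunov foliation are constant multiples of Lebesgue in the affine parameters, and $\mu$ decomposes locally as the product of these conditional measures thanks to holonomy invariance. Thus $\nu$ is absolutely continuous with locally constant Radon--Nikodym density; the density is $\a_0$-invariant via the intertwining in Lemma \ref{quasconj}(1), and $\a_0$ inherits ergodicity from $\a$ through the a.e.\ bijection $h_x$ provided by Corollary \ref{deck}, so the density is a.e.\ a constant and normalization makes it $1$.

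The conjugacy assertion then just collects: Corollary \ref{deck} with the definition of $\Gamma_x$ shows $h_x$ descends to an a.e.\ injection $L\to M$; the previous paragraph makes it measure-preserving; and Lemma \ref{quasconj}(1) transferred to $L$ is the intertwining $h\circ\a_0(\bn)=\a(\bn)\circ h$ a.e. The main obstacle is the exclusion of mixed-sign elements from $\hat\Gamma$ in the second paragraph: since every diagonal $\pm1$ matrix commutes with the diagonal dynamics $D(\bn)$ in the affine-parameter coordinates, the centralizer argument in those coordinates is uninformative, and one is forced to pass to the integer coordinates of the Cartan model and invoke the number-theoretic rigidity of totally real fields.
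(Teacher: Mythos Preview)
Your approach matches the paper's: lift to the torus $\Tm$, recognize a Cartan action, and use its rigidity to pin down both $\hat\Gamma$ and $\nu$. The paper states the group conclusion tersely (``the only possibility for $\Gamma_x/\Zm$ is to be $\{\pm\Id\}$''); your number-theoretic unpacking---linear parts of $\hat\Gamma$ land in the centralizer of the Cartan action inside $GL(m,\Z)$, which is the unit group of an order in a totally real field and hence has torsion $\{\pm 1\}$---is exactly the fact underlying that assertion. Your step (a), showing $S$ is a.e.\ independent of the base point, is correct but unnecessary: for a fixed $x$, the diagonal form of both the lift of $\a_0$ and of $\hat\Gamma$ in the affine-parameter coordinates already forces commutation of their linear parts, and this commutation is coordinate-free, so it transfers to the integer model where it becomes the nontrivial constraint you want.

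There is one genuine, though easily repaired, gap in your measure argument. You observe that the density $\rho=d\nu/d\lambda$ is $\a_0$-invariant and then invoke ergodicity of $(\a_0,\nu)$ inherited from $(\a,\mu)$ via $h_x$. But ergodicity with respect to $\nu$ only forces $\rho$ to be $\nu$-a.e.\ constant; this still allows $\rho=c\cdot 1_B$ for a proper $\a_0$-invariant set $B$ with $0<\lambda(B)<1$. To exclude this you need ergodicity of $\a_0$ with respect to $\lambda$, which does not follow from the conjugacy. Of course it holds---any hyperbolic element of the lifted Cartan action $\tilde\a_0$ is Bernoulli with respect to Haar---and this is exactly what the paper is invoking when it writes that the lifted measure is ``absolutely continuous w.r.t.\ Lebesgue and invariant and hence is Haar measure''. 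With that one substitution your argument goes through.
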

\begin{proof}
The only thing that needs proof in this Corollary is the property on the group and on $\nu$. We know already that $\Zm$ is a finite index normal subgroup of $\Gamma_x$. Let $\tilde\a_0$ be the  lifting of the action $\a_0$ to the finite covering $\Tm$ and let us lift also the measure $\nu$ to $\Tm$. By the generic position of the Lyapunov exponents for $\a$ we get that $\tilde\a_0$ is a restriction of a maximal Cartan action to a finite index subgroup and hence we get that the lifted measure is absolutely continuous w.r.t. Lebesgue and invariant and hence is Haar measure. Hence we get the claim on the measure. Again using that $\tilde\a_0$ is a maximal Cartan action on $\Tm$ we get that the only possibility for $\Gamma_x/\Zm$ is to be $\{\pm id\}$ and we get the Corollary. 
\end{proof}

\begin{proof}[ \bf Proof of Theorem \ref{TMain1} ]
By the weak-mixing reduction subsection \ref{weakmixingreduction} we have the a set $R_1$ with $\mu(R_1)>0$ and a finite index subgroup stabilizing $R_1$. By restricting the action to this finite index subgroup and normalizing the measure we may assume that the measure is weak-mixing and hence by Lemma \ref{weakmixing2} we get that there is a set of full measure $R_2$ such that for any $x\in R_2$,  $$R_3:=\bigcup_{z\in \W^u_\cC(x)\cap R}\W^s_\cC(z)$$ is a set of full measure. 

As a consequence of the construction of $h_x$ we get that the image of $h_x$ contains $R_3$ and hence has full measure, hence $h_x:(\Rm/\Gamma_x,\nu)\to (M,\mu)$ is an isomorphism conjugating $\alpha$ with $\alpha_0$. By Corollary \ref{orbifold} we have that $\Rm/\Gamma_x$ is either a torus or the infratorus $\Tm_{\pm}$. 

Take some $x$ and define $h=h_x^{-1}$. This gives the first part and items (\ref{TM1}) and (\ref{TM2}) of Theorem \ref{TMain1}.  Item (\ref{TM3}) follows from construction, i.e. $h^{-1}$ restricted to the affine spaces parallel to the axes is affine parameters of corresponding stable manifold. 

Finally item (\ref{TM4}) is a consequence the Journ\'e Theorem, see \cite[Theorem 5.7 and Proposition 5.13]{dlL} and item (\ref{TM3}). More precisely, consider a Pesin set $\Lambda$ of large measure and take the set $W^u_{loc}(\Lambda)\cup W^s_{loc}(\Lambda)$, where $W^u_{loc}(\Lambda)=\bigcup_{y\in\Lambda}W^u_{loc}(y)$. Restrict $h$ to $\Sigma$. Since affine structures and holonomies varies continuously on $\Lambda$ we have that $h$ is continuous on $\Sigma$. Moreover, since stable foliations are H\"older continuous along $W^s(\Lambda)$ we have that derivatives of $h$ along the stable direction are H\"older. Similarly for $W^u(\Lambda)$ and along the unstable foliation.  Finally we get by Journ\'e's Theorem that $h$ is smooth in the Whitney sense on $W^s(\Lambda)\cap W^u(\Lambda)\supset\Lambda$. \end{proof}

%
%
%
%
%
%
%
%
%
%
%
%
Observe that we can use $h_x$ and its restriction to planes parallels to the axes as new affine parameters. This are still smooth parameters and with this new affine parameters holonomies are isometries. 

For future use, let us summarize some properties of the measurable conjugacy.
\begin{lem}\label{smoothconjugacy}
There is an $\a_0$-invariant set of full Lebesgue measure $R\subset \mathbb R^m/\Gamma_x$ in the infratorus such that for every $v\in R$ the measurable conjugacy $h_x$ restricted to any invariant linear subspace $v+E\subset  \mathbb R^{m}/\Gamma_x$ trough $v$ coincides with the affine structure through $\w_E(h_x(v))\subset M$ ($\w_E(y)\subset M$ is the invariant manifold associated to $E$ through $y$). In particular, for a.e. $y$ and every Weyl chamber $\cC$, $h_x^{-1}|\w^u_\cC(y)$ is a diffeomorphism onto $h_x^{-1}(y)+E^u_\cC$ (the corresponding unstable plane) and holonomies are isometries in this affine parameters. 
\end{lem}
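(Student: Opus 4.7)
The plan is to establish the statement first at the base point $v=0$, where $h_x$ is built directly from the affine parameters $H^s_x$ and $H^u_x$, and then to transfer it to a full-measure $\a_0$-invariant set of $v$'s by using the affine equivariance in Lemma \ref{affineonM}. At $v = 0$ the defining formula of Section \ref{quotient} gives immediately $h_x|_{\Rs\times\{0\}} = H^s_x$ and $h_x|_{\{0\}\times\Ru} = H^u_x$, so the claim holds when $E = E^s_\cC$ or $E = E^u_\cC$. For an arbitrary invariant subspace $E = \bigoplus_{i\in I} E_{\chi_i}$, split it as $E = E_s \oplus E_u$ with $E_s \subset E^s_\cC$ and $E_u \subset E^u_\cC$. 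By the general-position assumption, $E_s$ and $E_u$ are each expressible as intersections of Weyl chamber stable subspaces, so item (5) of Proposition \ref{resume} shows that $\w_{E_s}(x)$ is a linear coordinate plane through $0$ in the affine parameter $H^s_x$ (and similarly $\w_{E_u}(x)$ in $H^u_x$). Proposition \ref{prodstructure}, which is precisely what identifies $h_x$ simultaneously as $h^s_x$ and $h^u_x$, then gives that $h_x$ on $E_s \oplus E_u$ is the product of these two affine parameters, hence the natural affine parameter of $\w_E(x)$.

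For general $v$, set $y := h_x(v)$. Lemma \ref{affineonM} produces an affine $L \in \mathcal A_m$ with $L(0) = v$, diagonal linear part $D$, and $h_x \circ L = h_y$ almost everywhere. Since $D$ preserves the Oseledets splitting, $L$ sends the linear plane $0 + E$ affinely onto $v + E$. Applying the base-point case already proven at $y$ gives that $h_y|_E$ is the affine parameter of $\w_E(y)$; hence $h_x|_{v+E} = h_y \circ L^{-1}|_{v+E}$ is that affine parameter precomposed with a diagonal affine map of $E$, which by the uniqueness clause in Proposition \ref{affinestructures} is itself a valid affine parameter of $\w_E(y)$. Defining $R$ as the $\a_0$-saturation of the intersection of the full-Lebesgue-measure sets coming from Lemma \ref{affineonM}, Corollary \ref{deck}, and the base-point conclusions for $y = h_x(v)$ (pulled back under $h_x$ via Fubini on $\Rs \times \Ru$) yields the desired $\a_0$-invariant set of full Lebesgue measure.

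For the final assertion on holonomies, by the explicit formula $h_x(z^s, z^u) = Hol^{u,\cC}_{x, H^u_x(z^u)}(H^s_x(z^s))$, the unstable holonomy from $\w^s_\cC(x)$ to $\w^s_\cC(y)$ with $y = h_x(0, z^u)$ reads in $h_x^{-1}$ coordinates simply as the translation $(z^s, 0) \mapsto (z^s, z^u)$, which is an isometry; the statement at arbitrary $y \in R$ follows by conjugating with the affine map $L$ of Lemma \ref{affineonM}, whose diagonal scalings cancel between source and target since the normalization is intrinsic to $h_x$ itself. The main technical obstacle I anticipate is the careful bookkeeping of null sets: each identity used is valid only off a set of measure zero, and the definition of $R$ must be done so as to preserve full measure under all the intersections, saturations, and pull-backs involved. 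This is essentially a routine Fubini argument combined with the fact that the relevant ``good'' sets of base points are either $\a_0$-invariant or admit full-measure $\a_0$-invariant subsets.
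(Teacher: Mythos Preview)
The paper does not supply a formal proof of this lemma; it is stated immediately after the sentence ``Observe that we can use $h_x$ and its restriction to planes parallel to the axes as new affine parameters. These are still smooth parameters and with these new affine parameters holonomies are isometries,'' and is explicitly described as a \emph{summary} of properties already built into the construction of $h_x$. Your proposal is a correct and faithful unpacking of exactly that observation: you read off the base-point case from the defining formulas for $h^s_x$ and $h^u_x$, propagate it to general $v$ via Lemma~\ref{affineonM}, and recover the isometry statement for holonomies from the holonomy formula~(\ref{huviaholonomy}). This is precisely the route the paper's construction dictates, so your approach and the paper's (implicit) argument coincide.

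One minor remark: what you attribute to ``Proposition~\ref{prodstructure}'' for the identification $h^s_x=h^u_x$ is in the paper actually the unnamed lemma immediately following it; and your phrase ``diagonal scalings cancel between source and target'' is a little loose, though the computation behind it is straightforward (in $h_x$-coordinates the holonomy through a general base point $v=L(0)$ still reads as a pure translation, since $L$ is affine with diagonal linear part and hence sends the translation $(z^s,0)\mapsto(z^s,z^u)$ to another translation). Neither point is a gap.
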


\section{Anosov actions}

An action $\a:\Zk\to Diff(M)$ is an Anosov action if there is $\mathbf{n_0}\in\Zk$ such that $\a(\mathbf{n_0})$ is an Anosov diffeomorphisms. 

\begin{thm}
Let $(\a,\mu)$ be an action as in Theorem \ref{TMain1}, i.e. a maximal rank action, assume furthermore that $\a$ is an Anosov action. Then $\alpha$ is smoothly conjugated to $\alpha_0$ and hence $M$ is indeed diffeomorphic to a (standard) torus. 
\end{thm}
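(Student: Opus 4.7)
The plan is to use the Anosov hypothesis to globalize the conclusions of Theorem~\ref{TMain1} and then invoke Journ\'e's theorem to promote the resulting measurable conjugacy to a smooth one. First, under the uniform hyperbolicity supplied by the Anosov element $\a(\mathbf n_0)$, the whole of $M$ behaves like a single Pesin block: stable and unstable manifolds for every Weyl chamber are globally defined and depend continuously on the base point, the affine parameters $H^s_x$, $H^u_x$ of Section~\ref{affinestructure} extend continuously everywhere, and the holonomy maps from Propositions \ref{unifholonomy} and \ref{prodstructure} are defined without removing any Lebesgue-null set. Consequently the construction of $h_x\colon\Rm/\Gamma_x\to M$ in Section~\ref{quotient} applies at every point and produces a continuous map defined on all of $L=\Rm/\Gamma_x$.

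Second, I would argue that $h:=h_x^{-1}$ is a global homeomorphism from $M$ onto $L$. The continuous extension of Theorem~\ref{TMain2} already supplies a factor map $\tilde h\colon O\to L\setminus F$; in the Anosov setting the local product structure is uniform, and any two sufficiently close points meet at a unique transverse local stable/unstable intersection, so $\tilde h$ is locally injective. Combined with compactness of $M$ and the symmetric construction running ``in reverse'' on $L$, this upgrades $\tilde h$ to a bijection on all of $M$ and shows it is bicontinuous. In particular the exceptional set $F$ in Theorem~\ref{TMain2} must be empty, because any singular point of the infratorus would create a point without an open neighborhood homeomorphic to $\Rm$, which is impossible if its preimage lies in the smooth manifold $M$.

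Third, once $h$ is a homeomorphism, $L$ inherits the manifold structure of $M$. Since the infratorus $\Tmp$ has $2^m$ orbifold singular points and therefore fails to be a topological manifold, this forces $L=\Tm$ and $\Gamma_x$ to consist of pure translations.

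The final and main step is to promote $h$ from a homeomorphism to a $C^{r-\e}$ diffeomorphism. By item~\eqref{TM3} of Theorem~\ref{TMain1}, $h$ is $C^{r-\e}$ along each stable manifold $W^s_x$ of $\a(\mathbf n_0)$, and by the symmetric role of stable and unstable directions the same holds along $W^u_x$. Under the Anosov hypothesis these two foliations are globally defined, H\"older continuous, and have a jointly integrable product structure; moreover the transverse dependence of the smooth restrictions is uniform. Journ\'e's theorem (\cite[Theorem~5.7 and Proposition~5.13]{dlL}) then yields that $h$ is globally $C^{r-\e}$ on $M$, delivering a smooth conjugacy between $\a$ and $\ao$ on $\Tm$. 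The main technical point where I expect difficulty is verifying that the ``almost everywhere'' clauses in the constructions of Section~\ref{quotient} and in Theorem~\ref{TMain2} genuinely disappear under uniform hyperbolicity, so that both the global injectivity of $h$ and the uniform transverse regularity needed for Journ\'e's theorem actually hold.
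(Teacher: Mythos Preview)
Your argument is correct, and the continuity step matches the paper's: both use that for the Anosov element the local product structure is uniform, so $h_x(z^s,z^u)=W^s_\epsilon(h_x(z^s,0))\cap W^u_\epsilon(h_x(0,z^u))$ is continuous near the origin, and Lemma~\ref{affineonM} propagates this everywhere. (Your detour through Theorem~\ref{TMain2} in the second paragraph is unnecessary --- the direct local-product argument already gives that $h_x$ is a covering map and hence descends to a homeomorphism $L\to M$; the paper does exactly this.) Both you and the paper then observe that $L$ must be a manifold, ruling out $\Tmp$.

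The genuine difference is the smoothness step. The paper does \emph{not} apply Journ\'e's theorem here; instead it simply invokes the global rigidity theorems of \cite{rhglobal} and \cite{RH-W} for maximal rank Anosov actions on tori, which furnish the smooth conjugacy as a black box once one knows $M$ carries such an action. Your route via Journ\'e is more self-contained: since in the Anosov case the stable and unstable foliations are globally defined with uniformly $C^r$ leaves and H\"older transverse regularity, and item~\eqref{TM3} of Theorem~\ref{TMain1} (extended by continuity to every point) gives uniform $C^{r-\epsilon}$ smoothness of $h$ along each leaf, Journ\'e applies directly. This avoids the external references but requires checking that the ``a.e.''\ in item~\eqref{TM3} upgrades to ``every'' with uniform bounds --- which it does, since the affine parameters are built from uniformly convergent telescoping products under uniform hyperbolicity. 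Either approach is legitimate; the paper's is shorter given the cited machinery, yours stays within the techniques already developed in the paper.
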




We shall prove that the measurable conjugacy in Theorem \ref{TMain1} is indeed a homeomorphism. 

\begin{proof}

Let $x\in M$ be a regular point and consider $h_x:\Rs\times\Ru\to M$ which is defined almost everywhere with respect to Lebesgue measure on $\Rs\times\Ru$, we consider here the Anosov element and take the Weyl chamber containing this Anosov element for the definition of $h_x$. First of all observe that by definition we get that there is $\epsilon>0$ and $\delta>0$ small such that if $(z^s,z^u)$ is $\delta$ close to $(0,0)$ then $$h_x(z^s,z^u)=W^s_{\epsilon}(h_x(z^s,0))\cap W^u_{\epsilon}(h_x(0,z^u)).$$
This implies that $h_x$ restricted to the $\delta$ neighborhood $B_{\delta}(0,0)$ of $(0,0)$ is continuous. Now, using Proposition \ref{affineonM} we get that for Lebesgue a.e. $(w^s,w^u)$ there is an isometry $L$ such that if $y=h_x(w^s,w^u)$ then $L(0,0)=(w^s, w^u)$ and $h_x\circ L=h_y$ a.e. In particular $h_x$ restricted to the $\delta$ neighborhood of $(w^s,w^u)$, $B_{\delta}(w^s,w^u)$ is also continuous since $h_y$ is continuous when restricted to the $\delta$ neighborhood $B_{\delta}(0,0)$ of $(0,0)$ and $h_x=h_y\circ L^{-1}$ and $L$ is an isometry. Since $\delta$ is fixed we get that the union of the $\delta$ balls around Lebesgue a.e. point is $\Rs\times\Ru$ and hence $h_x$ is continuous everywhere. 

Following the same reasoning as in the proof of Theorem \ref{TMain1} we get that $h_x$ is indeed a covering map and taking the quotient by the group of deck transformations we get that $h_x$ is a homeomorphisms and a conjugacy between the affine action $\a_0$ on an infratorus and the action $\a$.

Observe that here the infratorus is a manifold, hence, applying the results in \cite{rhglobal} or \cite{RH-W} on global rigidity of maximal  Anosov rank actions, we get the smooth classification. \end{proof}



\section{Proof of Theorem \ref{TMain2}}
From Theorem \ref{TMain1} we have a decomposition into weak mixing components, a corresponding finite index subgroup of $\mathbb{Z}^{m-1}$ and a measurable conjugacy $h:(M,\nu)\to (L,\lambda)$ between $\a$ and an affine action $\a_0$ when restricted to this finite index subgroup. Here we shall show how $h$ coincides with a continuous onto map from an $\a$-invariant open set $O$ and $L\setminus F$ for some finite $\a_0$-invariant set $F$ satisfying the conclusion of Theorem \ref{TMain2}.

The first step is to identify the open set $O$ and the finite set $F$. Given a Weyl chamber $\cC$ and a regular point $x$ let $\W^\sigma_\cC(x)$, $\sigma=s,u$ be the stable and unstable manifolds through $x$ corresponding to this Weyl chamber. With a subscript  $W^\sigma_{\cC,loc}(x)$ we shall denote the local invariant manifold once a Pesin set is understood. Let $\cC_1,\dots, \cC_{m}$ denote the Weyl chambers with only one positive exponent. 

We say that a closed set $B\subset M$ is a {\it box} or a {\it cube} if it is homeomorphic to the unit cube in $\Rm$ and its boundary $\partial B$ is in the union of stable and unstable manifolds for different Weyl chambers, i.e. there are regular points $x^{i,\pm},  i=1,\dots, m$, such that
$$\partial B\subset \bigcup_{i=1}^{m}\left(\w^s_{\cC_i}(x^{i,-})\cup \w^s_{\cC_i}(x^{i,+})\right).$$ 

We shall call each piece $$\partial^{\pm}_{\cC_i} B_l:=\partial B_l\cap \w^s_{\cC_i}(x^{i,\pm})$$ a {\it face} of the cube $B$ (or of its boundary $\partial B$). We are assuming that $x_i^+$ and $x_i^-$ do not belong to the same stable manifold, if not take connected components. 

Given a Pesin set $P$, if we can take $x^{i,\pm}_l\in P$ close enough so that $$\partial B\subset \bigcup_{i=1}^{m}\left(W^s_{\cC_i,loc}(x^{i,-})\cup W^s_{\cC_i,loc}(x^{i,+})\right),$$ then we say that $B$ is a {\it good box} and we get as a consequence that 
$$\partial^{\pm}_{\cC_i} B=\partial B\cap W^s_{\cC_i,loc}(x^{i,\pm})$$

\begin{lem}\label{boxes}
For any given Pesin set $P$ and for $\nu$ a.e. point $x\in P$ there is a sequence of good boxes $B_l$, $l\geq 1$, such that:
\begin{enumerate}
\item\label{shrink} $x\in B_l\subset  int  B_{l-1}$ and $\cap_{l\geq 1}B_l=\{x\}$,
\item\label{cube} $B_l$ is diffeomorphic to the closed unit cube,
\item\label{separatix} Each separatrix of $W^u_{\cC_i,loc}(x)\setminus\{x\}$ intersects a corresponding face of $\partial^{\pm}_{\cC_i} B_l\neq \emptyset$,
\item\label{conjugacy1} $h$ is defined a.e. w.r.t. Lebesgue measure on $\partial B_l$  and coincides with a diffeomorphism with $C^r$ norm bounded by a constant depending only on $P$ and $h(\partial B_l)$ is the boundary of a linear cube $\hat B_l$,
\item\label{conjugacy2} For $i=1,\dots m$, $W^{s}_{\cC_i,loc}(x)$ disconnects $B_l$ into two connected components named $B_{i,l}^{\pm}$ which are also boxes and $h(\partial B_{i,l}^{\pm})$ is the boundary of a corresponding linear cube $\hat B_{i,l}^{\pm}$.
\end{enumerate}
Moreover, the points $x^{i,\pm}_l\in P$ can be further required to belong to a given full measure set (e.g. has a dense orbit in the support of $\nu$, etc). 
\end{lem}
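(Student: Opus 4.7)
The strategy is to build each $B_l$ by pulling back, via the measurable isomorphism $h$, a small rectangular parallelepiped $\hat B_l$ centered at $\hat x=h(x)$ in the algebraic model $L$, with faces parallel to the coordinate hyperplanes. This is natural because the choice of $\cC_i$ (the Weyl chambers with exactly one positive exponent) makes $E^s_{\cC_i}$ correspond in $L$ to a coordinate hyperplane, so by Lemma \ref{smoothconjugacy} the image $h(W^s_{\cC_i,loc}(y))$ of any local stable manifold is an affine piece of such a hyperplane. The centers of the $2m$ faces of $\hat B_l$, lying on the coordinate axes through $\hat x$, correspond to the candidate base points $x^{i,\pm}_l$ on the two separatrices of $W^u_{\cC_i,loc}(x)$.

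To make the pullback rigorous I would select the $x^{i,\pm}_l$ inside a Whitney-good subset of $P$. For each small $\delta>0$, Theorem \ref{TMain1}(\ref{TM4}) furnishes a set $\Lambda'_\delta\subset P$ with $\mu(P\setminus\Lambda'_\delta)<\delta$ on which $h$ coincides with a $C^{r-\delta}$ Whitney diffeomorphism; by further intersecting $\Lambda'_\delta$ with any prescribed full-measure set I secure the ``moreover'' clause. Absolute continuity of conditional measures along $W^u_{\cC_i,loc}$ (holonomy invariance of the leafwise Haar measure) combined with the Lebesgue density theorem yields, for $\mu$-a.e.\ $x\in\Lambda'_\delta$, sequences on each separatrix of $W^u_{\cC_i,loc}(x)\setminus\{x\}$ lying in $\Lambda'_\delta$ and accumulating at $x$; letting $\delta\to 0$ covers $\mu$-a.e.\ $x\in P$. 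The points $x^{i,\pm}_l$ are then chosen as a diagonal sequence from these, with the $l$-th choice close enough to $x$ that the resulting box nests inside $B_{l-1}$.

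With the $2m$ base points fixed, let $B_l$ be the closed region in $M$ whose boundary lies in $\bigcup_i(W^s_{\cC_i,loc}(x^{i,-}_l)\cup W^s_{\cC_i,loc}(x^{i,+}_l))$. By Lemma \ref{smoothconjugacy} and Theorem \ref{TMain1}(\ref{TM3}), $h$ restricts to a $C^{r-\delta}$ diffeomorphism on each such codimension-one piece onto an open subset of the affine hyperplane $\{y_i=y_i(h(x^{i,\pm}_l))\}$; these $2m$ image hyperplanes bound a genuine rectangular parallelepiped $\hat B_l\subset L$, and the uniform Whitney estimates on $\Lambda'_\delta$ give a $C^r$ bound on the face diffeomorphisms depending only on $P$. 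This yields property~(\ref{conjugacy1}). The main technical obstacle is property~(\ref{cube}): showing that the $2m$ codimension-one local stable manifolds in $M$ really cut out a region homeomorphic to the closed unit cube. Here the general-position assumption on Lyapunov exponents ensures pairwise transversality of the hyperplanes $E^s_{\cC_i}$ with the correct dimension counts for every combination, while iterated applications of the holonomy product structure (Proposition~\ref{prodstructure}) realize the lower-dimensional faces, edges, and corners as genuine intersections of local stable manifolds; for $l$ large enough, uniform Pesin continuity of local stable manifolds over $P$ guarantees that the resulting combinatorial pattern matches that of $\hat B_l$.

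Once the cube structure is in place, the remaining items follow quickly: (\ref{shrink}) from $x^{i,\pm}_l\to x$ and the corresponding shrinking of $\hat B_l$; (\ref{separatix}) is automatic since by construction $x^{i,\pm}_l$ lies on the $i$-th unstable separatrix of $x$ and on the face $\partial^\pm_{\cC_i} B_l$; and (\ref{conjugacy2}) because in the model the coordinate hyperplane through $\hat x$ bisects $\hat B_l$ into two sub-parallelepipeds $\hat B^\pm_{i,l}$, and pulling back through $h$ (applied to the local stable manifold $W^s_{\cC_i,loc}(x)$, which is also among the ``good'' leaves of Theorem \ref{TMain1}(\ref{TM3})) yields the two sub-boxes $B^\pm_{i,l}$ with boundaries of the required form.
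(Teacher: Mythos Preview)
Your approach is essentially the paper's, only more explicit. The paper's own proof is a three-line sketch: take $x$ a density point of $P$ intersected with the full-measure set of Lemma~\ref{smoothconjugacy}; since $W^s_{\cC_i,loc}(x)$ locally separates, pick the base points $x^{i,\pm}_l$ from that same set on both sides of $W^s_{\cC_i,loc}(x)$, approaching $x$; items (\ref{shrink})--(\ref{separatix}) then follow from uniformity of the foliations over $P$, and (\ref{conjugacy1})--(\ref{conjugacy2}) from Lemma~\ref{smoothconjugacy} together with Luzin-type uniformity on $P$.

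The one genuine difference is your placement of the base points. You put $x^{i,\pm}_l$ on the one-dimensional unstable separatrices $W^u_{\cC_i,loc}(x)\setminus\{x\}$ (the coordinate axes through $\hat x$ in the model), using absolute continuity of conditional measures along the Lyapunov foliations to find Pesin points there. The paper merely takes them ``from both sides of $W^s_{\cC_i,loc}(x)$'' without this constraint. Your choice buys you item~(\ref{separatix}) for free, since $x^{i,\pm}_l$ itself witnesses the intersection of the separatrix with the face; the paper instead relies on transversality of $W^u_{\cC_i,loc}(x)$ with the nearby codimension-one stable leaf, which is part of the ``uniformity of foliations on Pesin sets'' invoked there. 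Conversely, the paper's choice is slightly more flexible (density points are easier to find in an $m$-dimensional neighborhood than along a curve), but both routes are valid and lead to the same box structure.
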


\begin{proof}
Consider $x$ a density point on the Pesin set $P$ intersected with the set of full measure in Lemma  \ref{smoothconjugacy}. Since $W^{s}_{\cC_i,loc}(x)$ locally separates a neighborhood of $x$ in two connected components, we can take the points $x^{i,\pm}_l$ from the same set as $x$ and from both sides of $W^{s}_{\cC_i,loc}(x)$, approaching $x$. 

Parts (\ref{shrink}), (\ref{cube}) and (\ref{separatix}) follow from uniformity of foliations on Pesin sets. Parts (\ref{conjugacy1}) and (\ref{conjugacy2}) are a consequence Lemma \ref{smoothconjugacy} and uniformity on Pesin sets (Luzin set).
\end{proof}

Given a good box $B$, for $\sigma=s,u$ let $$W^{\sigma}_{\cC_i,B}(x)=B\cap W^{\sigma}_{\cC_i,loc}(x)$$ be the core of the box $B$. Let $\W^{(u,\pm)}_{\cC_i}(x)$ be the separatrix of $\W^u_{\cC_i}(x)\setminus\{x\}$ that intersects $\partial^{\pm}_{\cC_i} B$ and for a regular point $y\in \W^s_{\cC_i}(x)$ we define $\W^{u,\pm}_{\cC_i}(y)$ accordingly. Finally, for $r>0$ let $W^{u,\pm}_{\cC_i,r}(y)$ be the segment of length $r$ with respect to the affine parameters given by $h$ (see Lemma \ref{smoothconjugacy}) inside $\W^{u,\pm}_{\cC_i}(y)$. 

Let us fix $x$ a point as in Lemma \ref{boxes}, and $l\geq 1$, we shall omit the subscript $l$ in $B_l$ in the sequel. Define $$O=\bigcup_{\bn\in \Gamma} \an(int B).$$ We have that the corresponding set $$\bigcup_{\bn\in \Gamma} \a_0(\bn)(int \hat B)$$ is an open nonempty $\ao$-invariant set, then by Berend's Theorem \cite{dberend} we get that it is the complement to a finite $\ao$-invariant set $F$. Observe that singular points of the infratorus are contained in $F$ since points in $L\setminus F$ have a cube neighborhood. We may also assume that $$O=\bigcup_{\bn\in \Gamma} \an(B)\;\;\;\mbox{and}\;\;\; L\setminus F=\bigcup_{\bn\in \Gamma} \a_0(\bn)(\hat B)$$
because the faces of the boundary of $B$ (respectively of $\hat B$) is formed by stable manifolds of different elements of the action passing trough points which can be taken to have dense orbit on the support of the measure and hence each face of the boundary is mapped eventually completely inside $int B$ (respectively $int \hat B$).


For a point $x$ as in Lemma \ref{boxes}, let $r^{i,\pm}$ be the length of the separatrix $W^{u}_{\cC_i,B}(x)\cap B^{\pm}=W^{u}_{\cC_i,B}(x)\cap\W^{(u,\pm)}_{\cC_i}(x)$  measured with respect to the affine parameter in $\W^{u}_{\cC_i}(x)$ (i.e. $W^{u}_{\cC_i,B}(x)\cap B^{\pm}=W^{u,\pm}_{\cC_i,r^{i,\pm}}(x)$).

\begin{lem}\label{intersection}
For $1\leq i\leq m$ and for $\nu$ a.e. $x$ and for any full Lebesgue measure subset $R\subset W^{s}_{\cC_i,B}(x)$, $$\bigcup_{z\in R} W^{u,\pm}_{\cC_i,r^{i,\pm}}(z)\stackrel{o}{=}B^{\pm}$$ w.r.t. $\nu$-measure. In particular, for $\nu$ a.e. point in $y\in B$, $$W^u_{\cC_i, B}(y)\pitchfork W^s_{\cC_i,B}(x)\neq\emptyset.$$
\end{lem}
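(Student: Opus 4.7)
The plan is to push the statement via the measurable conjugacy $h$ from Theorem~\ref{TMain1} to the algebraic model on $L$, where it reduces to a Fubini computation on a linear sub-cube. The work splits into (a) identifying the relevant product structure on the algebraic side, (b) matching the length $r^{i,\pm}$ of the unstable separatrix across all base points on the stable slice, and (c) pulling the resulting null set back to $M$.

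First I would invoke Lemma~\ref{boxes}(\ref{conjugacy1})--(\ref{conjugacy2}) to see that $h$ sends $\partial B$ to the boundary of a linear cube $\hat B \subset L$ and that $W^s_{\cC_i,loc}(x)$ splits $B$ into $B^\pm$ corresponding to two sub-cubes $\hat B^\pm$ of $\hat B$, separated by the coordinate hyperplane $\hat S := h(W^s_{\cC_i, B}(x))$. On the algebraic side the Lyapunov foliation of $\cC_i$ is one-dimensional, affine, and parallel to the $\chi_i$-axis, so $\hat B^\pm$ admits a product decomposition $\hat B^\pm = \hat S \times I^\pm$, where $I^\pm$ is a segment of length $r^{i,\pm}$ in the $\chi_i$-direction.

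The second step matches the unstable segments across different base points. By Lemma~\ref{smoothconjugacy}, $h^{-1}$ restricted to any coordinate line through a regular point coincides with the affine parameter of the corresponding invariant manifold, and in these new affine parameters stable holonomies are isometries. Hence for a.e.\ $z \in W^s_{\cC_i, B}(x)$ the unstable segment $W^{u,\pm}_{\cC_i, r^{i,\pm}}(z)$ is exactly $h^{-1}(\{h(z)\}\times I^\pm)$, and in particular its far endpoint lies on $\partial^\pm_{\cC_i} B$. This is the one nontrivial point of the proof: without the isometric matching of affine parameters across base points, the length $r^{i,\pm}$ chosen at $x$ could overshoot or undershoot at other $z$.

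With this identification in place, the first assertion is immediate. For any full-measure $R \subset W^s_{\cC_i, B}(x)$, the image $h(R)$ has full Lebesgue measure in $\hat S$, and Fubini on the product $\hat B^\pm = \hat S \times I^\pm$ gives that
\[
\hat B^\pm \setminus \bigcup_{\hat z \in h(R)} \bigl(\{\hat z\}\times I^\pm\bigr)
\]
is $\lambda$-null. Since $h:(B,\nu)\to(\hat B,\lambda)$ is a measure-preserving isomorphism by Theorem~\ref{TMain1}(\ref{TM1})--(\ref{TM2}), pulling back yields the claim for $\nu$-measure on $B^\pm$. The ``in particular'' statement then follows by inspection: $\nu$-a.e.\ $y\in B$ lies on some $W^{u,\pm}_{\cC_i,r^{i,\pm}}(z)$ with $z\in R\subset W^s_{\cC_i,B}(x)$, so $z\in W^u_{\cC_i,B}(y)\cap W^s_{\cC_i,B}(x)$ is the desired intersection point, with transversality automatic from $E^u_{\cC_i}\oplus E^s_{\cC_i}$ being complementary in the Cartan setting.
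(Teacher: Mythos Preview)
Your proof is correct and follows exactly the route the paper takes: push everything via $h$ to the linear model using Lemma~\ref{smoothconjugacy}, where the statement reduces to a trivial Fubini argument on a product cube, and then pull back using that $h$ is a measure-preserving conjugacy. The paper's proof is a two-line sketch of precisely this; you have simply unpacked the phrase ``the same assertion for the linear case is trivial'' and made explicit why the isometric holonomies from Lemma~\ref{smoothconjugacy} ensure the length $r^{i,\pm}$ matches across all base points.
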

\begin{proof}
The assertion on the transverse intersection is an immediate consequence of the first assertion. The first assertion is an immediate consequence of Lemma \ref{smoothconjugacy}, that $h$ is a measurable conjugacy between $\nu$ and $\lambda$ and that the same assertion for the linear case is trivial.
\end{proof}

Let $E^\sigma_{\cC_j}$, $\sigma=s,u$, $j=1,\dots m$ be the corresponding stable and unstable invariant spaces for the linear action. Let us use the same notation for their projection on the infratorus $L$. Observe that as long as $z+E^\sigma_{\cC_j}\subset \Rm$ does not contain a point corresponding to a singular point of the infratorus, the natural projection $p$ from $z+E^\sigma_{\cC_j}$ into $L$ is one to one and onto the corresponding affine space $E^\sigma_{\cC_j}(p(z))$. 


Given a box $\hat B$ as in Lemma \ref{boxes} and $\hat y\in \hat B$, let $E^\sigma_{\cC_j,\hat B}(\hat y)$ be the connected component of $E^\sigma_{\cC_j}(\hat y)\cap \hat B$ containing $\hat y$. Given a regular point $y\in B$ recall that $W^{\sigma}_{\cC^i,B}(y)$ is the connected component of $B\cap \W^{\sigma}_{\cC_i}(y)$ containing $y$. 

\begin{lem}\label{disconnect}
For $\nu$ a.e. point $y\in B$, $W^s_{\cC_i,B}(y)$ is a $k$-dimensional box and $$h(W^s_{\cC_i,B}(y))=E^s_{\cC_i,\hat B}(h(y)).$$ Moreover, for $\nu$ a.e. $y, z\in B$ with $z\in W^{u}_{\cC_i,B}(y)$, $Hol^s_{y,z}:\W^{s}_{\cC_i}(y)\to\W^{s}_{\cC_i}(z)$ is such that $$Hol^s_{y,z}(W^{s}_{\cC_i,B}(y))=W^{s}_{\cC_i,B}(z).$$ Finally, $W^{s}_{\cC_i,B}(y)$ disconnects $B$ in two connected components, homeomorphic to boxes. 
\end{lem}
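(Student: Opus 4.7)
The plan is to transfer the question to the linear cube $\hat B = h(B)\subset L$, where the statements are transparent, then pull back through those parts of $h$ on which we know it behaves well: on $\partial B$, where Lemma \ref{boxes}(\ref{conjugacy1}) makes $h$ a diffeomorphism onto $\partial \hat B$, and on a single Lyapunov leaf through a typical point, where Lemma \ref{smoothconjugacy} makes $h$ a diffeomorphism onto the corresponding affine plane. Choose affine coordinates $(t_1,\ldots,t_m)$ on the universal cover $\Rm$ of $L$ adapted to the Lyapunov decomposition, so that $\hat B = \prod_{j=1}^m [a_j^-,a_j^+]$, the face $h(\partial^\pm_{\cC_j}B)$ is the slice $\{t_j=a_j^\pm\}$, the unstable direction $E^u_{\cC_i}$ is the $t_i$-axis, and $E^s_{\cC_i}$ is the complementary coordinate hyperplane.

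For the box and image statements, fix a $\nu$-full subset of interior points $y\in B$ at which $h$ is defined, $\hat y_i\in(a_i^-,a_i^+)$, and $h|_{\W^s_{\cC_i}(y)}$ is a diffeomorphism onto $\hat y + E^s_{\cC_i}$. The affine plane $\hat y + E^s_{\cC_i}$ meets $\hat B$ exactly in the linear $(m-1)$-cube $E^s_{\cC_i,\hat B}(\hat y)=\prod_{j\neq i}[a_j^-,a_j^+]\times\{\hat y_i\}$, whose boundary lies on the faces $\{t_j=a_j^\pm\}$, $j\neq i$. Pulling back via $h|_{\W^s_{\cC_i}(y)}$ identifies the component of $\W^s_{\cC_i}(y)\cap B$ through $y$ diffeomorphically with this $(m-1)$-cube, proving that $W^s_{\cC_i,B}(y)$ is a box and $h(W^s_{\cC_i,B}(y))=E^s_{\cC_i,\hat B}(h(y))$.

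For the holonomy claim, take $z\in W^u_{\cC_i,B}(y)$; in affine coordinates $\hat z$ differs from $\hat y$ only in the $t_i$-entry. By Proposition \ref{unifholonomy} and Lemma \ref{smoothconjugacy}, $h$ conjugates the unstable holonomy $\W^s_{\cC_i}(y)\to\W^s_{\cC_i}(z)$ to translation along the $t_i$-axis by $\hat z_i-\hat y_i$, which carries $E^s_{\cC_i,\hat B}(\hat y)$ bijectively onto $E^s_{\cC_i,\hat B}(\hat z)$. Applying the previous step at both $y$ and $z$ yields the asserted equality of slices.

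Finally, the hyperplane $\{t_i=\hat y_i\}$ obviously splits $\hat B$ into two linear $m$-boxes. Since $B$ is homeomorphic to $[0,1]^m$ by Lemma \ref{boxes}(\ref{cube}) and $W^s_{\cC_i,B}(y)$ is a smoothly embedded $(m-1)$-disc whose boundary lies in $\partial B$ and runs from $\partial^-_{\cC_j}B$ to $\partial^+_{\cC_j}B$ for each $j\neq i$, a standard bicollar separation argument in the topological ball $B$ produces the two box components. The main obstacle is precisely this last step, because $h$ is only \emph{measurable} in the interior of $B$ and we cannot pull back the linear partition of $\hat B$ cell-by-cell; the separation must instead be argued topologically inside $B$, using only that $W^s_{\cC_i,B}(y)$ is a properly embedded codimension-$1$ disc joining opposite faces of the topological cube $B$.
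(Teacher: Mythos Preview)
Your proof is correct and follows essentially the same route as the paper's. For the first two assertions you use Lemma~\ref{smoothconjugacy} to transport the problem to the linear cube $\hat B$, exactly as the paper does (the paper also cites Lemma~\ref{intersection}, but your direct coordinate description serves the same purpose). For the disconnection statement, the paper observes that $h$ restricted to $\partial B\cup W^s_{\cC_i,B}(y)$ is a diffeomorphism onto $\partial\hat B\cup E^s_{\cC_i,\hat B}(h(y))$ and then invokes the Sch\"onflies theorem (in a coordinate chart) to conclude that the pair $(B,W^s_{\cC_i,B}(y))$ is homeomorphic to $(I^m,I^{m-1}\times\{1/2\})$; your ``standard bicollar separation argument'' for a properly embedded codimension-one disc in a ball is exactly this, just without naming the theorem. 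Your closing remark that the separation must be done topologically because $h$ is only measurable on ${\rm int}\,B$ is precisely the point.
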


\begin{proof}
The first assertion follows from Lemma \ref{smoothconjugacy} and the constructions of the boxes in Lemma \ref{intersection}. The second is a direct consequence of the first and the same property for the linear case. 

Finally, let us prove the third assertion. From Lemma \ref{boxes} and the first part we get that $h(\partial B_l\cup W^{s}_{\cC_i,B}(y))=\partial\hat B\cup E^s_{\cC_i,\hat B}(h(y))$ and on this domain $h$ is a diffeomorphism by Lemma \ref{smoothconjugacy}.

Taking $B$ small enough so that it is in a neighborhood chart and using  Sch\"onflies Theorem \cite{jalexander, mbrown, mmorse, bmazur} we get that the pair $(B, W^{s}_{\cC_i,B}(y))$ is homeomorphic to the pair $(I^m,I^{m-1}\times\{1/2\})$. Indeed by the H-cobordism theorem, it follows that it is diffeomorphic if $m-1\neq 3$, (i.e. $m\neq 4$).
\end{proof}


\begin{lem}\label{boxeverypoint}
Given a set $R$ of full measure, for {\it every} $y\in B$ there is a sequence of boxes $y\in B_{n+1}(y)\subset int B_n(y)$, $\geq 1$, such that 
\begin{enumerate}
\item $\partial B_n (y)$ is contained in the union of stable manifolds for different Weyl chambers through points from $R$,
\item For each $n\geq 1$, $h(\partial B_n(y))$, is the boundary of a parallelepiped, moreover $diam  (h(\partial B_n(y)))\to 0$ as $n\to\infty$.
\end{enumerate}
\end{lem}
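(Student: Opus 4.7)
\textbf{Proof plan for Lemma \ref{boxeverypoint}.}
The strategy is to build the boxes on the affine side first and pull them back through $h$, rather than trying to work directly at the (possibly irregular) point $y$. Let $\hat y=h(y)\in\hat B$ and choose in linear coordinates on $L$ a decreasing sequence of closed parallelepipeds $\hat B_n(\hat y)\subset\hat B$ with $\hat y\in\operatorname{int}\hat B_n(\hat y)$, with faces parallel to the stable affine hyperplanes $E^s_{\cC_i}$, and with $\operatorname{diam}\hat B_n(\hat y)\to 0$. Since $R\cap B$ has full measure, so does $h(R\cap B)$ inside $\hat B$, hence it is dense; we may therefore perturb each face of $\hat B_n(\hat y)$ slightly (keeping $\hat y$ in its interior) so that each face lies in an affine hyperplane $h(x^{i,\pm}_n)+E^s_{\cC_i}$ with $x^{i,\pm}_n\in R$. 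Furthermore, using the Luzin regularity built into Lemma \ref{smoothconjugacy}, we may require all the points $x^{i,\pm}_n$ (for all $n$) to lie in one fixed Pesin set $P_0$ of arbitrarily large measure, so that the local stable manifolds $W^s_{\cC_i,\mathrm{loc}}(x^{i,\pm}_n)$ have uniform size and the affine parameters on them vary continuously.

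Next, I pull back each face of $\hat B_n(\hat y)$. By Lemma \ref{smoothconjugacy}, $h$ restricted to $W^s_{\cC_i,\mathrm{loc}}(x^{i,\pm}_n)$ is a $C^{r-\varepsilon}$ diffeomorphism onto a neighborhood of $h(x^{i,\pm}_n)$ in the affine hyperplane $h(x^{i,\pm}_n)+E^s_{\cC_i}$. Hence each $2(m-1)$-dimensional face $\hat F$ of $\hat B_n(\hat y)$ lifts to a $C^{r-\varepsilon}$ disk $F\subset W^s_{\cC_i,\mathrm{loc}}(x^{i,\pm}_n)$. The combinatorial glueing pattern of the faces of $\hat B_n(\hat y)$ involves intersecting pairs of adjacent stable hyperplanes transversally; on the $M$ side, by Proposition \ref{resume} items (\ref{affineparameterscoherence})--(\ref{holonomyproduct}) and Lemma \ref{intersection}, the corresponding pairs $W^s_{\cC_i,\mathrm{loc}}(x^{i,\pm}_n)$ and $W^s_{\cC_j,\mathrm{loc}}(x^{j,\pm}_n)$ intersect transversally along a smooth submanifold of one lower dimension, and $h$ sends these intersections onto the lower-dimensional edges of $\hat B_n(\hat y)$ diffeomorphically. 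Iterating this codimension-by-codimension (for pairs, triples, \dots of Weyl chambers) shows that the lifted faces glue together along their lower-dimensional strata into a topological $(m-1)$-sphere $S_n(y)\subset M$, with $h(S_n(y))=\partial\hat B_n(\hat y)$. Taking $n$ large enough that $\hat B_n(\hat y)$ lies in a small neighborhood of $\hat y$, the sphere $S_n(y)$ lies in a single coordinate chart of $M$, and Sch\"onflies (as applied in Lemma \ref{disconnect}) gives a closed topological ball $B_n(y)\subset M$ with $\partial B_n(y)=S_n(y)$. Monotonicity $B_{n+1}(y)\subset\operatorname{int}B_n(y)$ and $\operatorname{diam}h(\partial B_n(y))\to 0$ follow from the corresponding properties on the image side.

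The main obstacle I expect is verifying that $y$ lies in the \emph{interior} of $B_n(y)$, since $y$ itself need not be regular and $h$ need not be continuous at $y$. The argument is as follows: by Schönflies, $M\setminus S_n(y)$ has exactly two connected components, one bounded (the interior of $B_n(y)$) and one not; we must argue that $y$ falls in the bounded one. Because $\hat y\in\operatorname{int}\hat B_n(\hat y)$ and the parallelepipeds $\hat B_n(\hat y)$ are nested with diameter tending to zero, the points $h^{-1}(\hat y)$ (defined a.e.) accumulate only on $y$-like fibers; more concretely, one picks nearby regular points $y'\in B\setminus S_n(y)$ with $h(y')\in\operatorname{int}\hat B_n(\hat y)$ and connects $y'$ to $y$ by a short path inside $B$ that does not cross $S_n(y)$ (possible because the faces of $S_n(y)$ are uniformly separated from $y$, using controlled stable-manifold sizes on the Pesin set $P_0$ chosen above). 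Thus $y$ and $y'$ lie in the same component, namely $\operatorname{int}B_n(y)$, completing the proof.
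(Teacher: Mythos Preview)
Your approach has a genuine circularity problem at the very first step. You begin by setting $\hat y = h(y)\in\hat B$, but at this stage of the paper $h$ is only the \emph{measurable} conjugacy from Theorem~\ref{TMain1}, defined merely $\mu$-almost everywhere. The lemma asserts the existence of the nested boxes for \emph{every} $y\in B$, including points where $h$ is not yet defined; indeed, the paper uses Lemma~\ref{boxeverypoint} immediately afterwards precisely to \emph{define} $h(y)$ as the limit of the shrinking images $h(\partial B_n(y))$. So invoking $h(y)$ to locate the target parallelepipeds assumes what the lemma is being used to construct. Your own remark later that ``$h$ need not be continuous at $y$'' understates the issue: it need not be defined at $y$ at all.

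The paper's argument avoids this by working entirely on the $M$ side, via a Heine--Borel type bisection. One starts with the good box $B\ni y$ already in hand. Lemma~\ref{disconnect} says that for $\nu$-a.e.\ $z\in B$ (hence for some $z\in R$), the piece $W^s_{\cC_i,B}(z)$ disconnects $B$ into two sub-boxes, and moreover $h(W^s_{\cC_i,B}(z))=E^s_{\cC_i,\hat B}(h(z))$, so the cut on the $M$ side corresponds to an affine hyperplane cut of $\hat B$. One then keeps the half containing $y$ and iterates, cycling through the Weyl chambers $\cC_1,\dots,\cC_m$; at each stage one may choose the cutting point $z\in R$ so that the affine cut is close to the midpoint, forcing $\operatorname{diam}h(\partial B_n(y))\to 0$. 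No knowledge of $h(y)$ is needed: one only needs to know, at each step, which of two open sub-boxes of $M$ contains $y$, and that is a purely topological question in $M$. This also makes your final ``$y\in\operatorname{int}B_n(y)$'' worry disappear, since $y$ is in the interior by construction at every subdivision step.
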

\begin{proof}
This is an immediate consequence of Lemma \ref{disconnect} and the traditional subdivision of boxes like in Heine-Borel theorem.
\end{proof}

\proof{\bf of Theorem \ref{TMain2}}
From Lemma \ref{boxeverypoint} it follows that $h$ uniquely extends to a continuous map from $B$ onto $\hat B$. Indeed for $y\in B$ take the nested sequence from Lemma \ref{boxeverypoint} and define $h(y)$ to be the limit point of $h(\partial B_n(y))$. Continuity follows since the preimage of the box bounded by $h(\partial B_n(y))$ is $B_n(y)$ that is a neighborhood of $y$ for every $n\geq 0$. 

From the definition of $O$ and $L\setminus F$ we get that $h$ extends uniquely to a continuous map $h:O\to L\setminus F$ that semi-conjugates. Moreover, from Lemma \ref{boxeverypoint} it also follows that for any $z\in L\setminus F$, $h^{-1}(z)$ is the nested intersection of boxes and for $\lambda$ a.e. $z\in L$ this nested intersection is a point by Lemma \ref{boxes}.

For the rest of the proof of Theorem  \ref{TMain2} we shall prove that the restriction of $h$ to a suitable $k$-dimensional skeleton is a diffeomorphism and that this restriction extends to a homeomorphism of $O$ onto $L\setminus F$.

We have the following topological lemma for the infratorus.
\begin{lem}\label{partition}
Given $\epsilon >0$, and a box $\hat B\subset L\setminus F$ as in Lemma \ref{boxes} there is a bounded subset $K\subset \Zmm$, $R> 0$ and a partition by rectangles $C_i$, $i=1,\dots r$ of the complement of some neighborhood of the singularities $L_{\epsilon}:=\bigcup_{1\leq i\leq r} C_i$, such that:
\begin{enumerate}
\item $diam C_i<\epsilon$,
\item $C_i\cap C_ j\subset\partial C_i\cap\partial C_j$ for $i\neq j$,
\item $C_i\subset \a_0(\bn)(\hat B)$ for some $\bn\in K$,
\item $$\partial C_i\subset \bigcup _{\substack{\bn\in K,\\ a\in\pm, \\1\leq j\leq m}}\a_0(\bn)((E^{s}_{\cC_j,R}+\partial^a_{\cC_j}(\hat B)))$$
\item $$L\setminus L_{\epsilon}\subset\bigcup_{z\in F}B_{\epsilon}(z),$$
\item $int L_{\epsilon}$ is homeomorphic to $L\setminus F$. 
\end{enumerate}

\end{lem}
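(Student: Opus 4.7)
My plan is to cut $L$ by a finite mesh of pieces of $\cC_j$-stable hyperplanes obtained as $\ao$-iterates of the thickened faces of $\hat B$, and to discard those cells of the mesh that lie in a prescribed neighborhood of $F$. The key ingredient is Berend's theorem: since the (lift of the) Cartan action $\ao$ on $L$ has only finite or dense orbits, and the corner points $\hat x^{j,\pm}$ of $\hat B$ can be chosen in Lemma \ref{boxes} to have dense $\ao$-orbit, the family of translated stable hyperplanes $\{\ao(\bn)\hat x^{j,\pm}+E^s_{\cC_j}\}_{\bn\in\Zmm}$ is dense in the space of all parallel $\cC_j$-stable hyperplanes of $L$.

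First I would fix $R>0$ large enough that each thickened face $E^s_{\cC_j,R}+\partial^\pm_{\cC_j}\hat B$ contains a fundamental domain for the $\Zm$-translations inside its stable hyperplane, so that iterates of this thickened face sweep out full $(m-1)$-dimensional stable sheets in $L$. Next, for each Weyl chamber $\cC_j$ and each sign $\pm$, I choose a finite set $K_j^{\pm}\subset\Zmm$ so that the resulting iterated sheets form an $\epsilon/(2m)$-dense subfamily of $\cC_j$-stable hyperplanes in $L$; this is possible by the above density and compactness. Set $K=\bigcup_{j,\pm}K_j^{\pm}$ and let $\Sigma$ be the union of all sheets $\ao(\bn)(E^s_{\cC_j,R}+\partial^\pm_{\cC_j}\hat B)$, $\bn\in K$. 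The open cells of $(\bigcup_{\bn\in K}\ao(\bn)\hat B)\setminus\Sigma$ are parallelepipeds whose sides, by the density choice, have length at most $\epsilon/(2m)$ in each of the $m$ stable directions, so each closed cell has diameter less than $\epsilon$; moreover such a cell cannot cross $\partial\ao(\bn)\hat B\subset\Sigma$, hence lies in some $\ao(\bn)\hat B$, $\bn\in K$. Enumerate those closed cells disjoint from $\bigcup_{z\in F}B_{\epsilon/2}(z)$ as $C_1,\dots,C_r$ and set $L_\epsilon=\bigcup_i C_i$; conditions (1)--(5) follow directly.

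The main obstacle is (6), identifying $\mathrm{int}\,L_\epsilon$ with $L\setminus F$ up to homeomorphism. By construction, the complement of $\mathrm{int}\,L_\epsilon$ in $L$ is a disjoint union of small closed neighborhoods, one around each $z\in F$, each contained in $B_\epsilon(z)$ and assembled from finitely many discarded cells. At a regular point $z\in L$ this neighborhood is a closed topological ball which collapses radially onto $z$, yielding a local homeomorphism from $\mathrm{int}\,L_\epsilon$ onto $L\setminus F$ near $z$. At a singular point $z\in F\cap\Tmp$ the neighborhood is the quotient by $-\Id$ of a centrally symmetric Euclidean ball, and the local homeomorphism is produced by a $\pm\Id$-equivariant radial collapse in a lift to $\Rm$. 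Gluing the local collapses with the identity on the rest of $L_\epsilon$ via a cutoff in the radial parameter yields the global homeomorphism; the delicate step is matching the collapse continuously across the polyhedral boundaries of the discarded cells, handled by parameterizing the collapse by the Euclidean coordinates of $L$ (or of a local lift near a singularity).
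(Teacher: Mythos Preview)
Your strategy is the same as the paper's---cover $L$ away from $F$ by finitely many $\ao$-iterates of $\hat B$ and partition the result by the induced hyperplane arrangement---but you fill in considerably more detail than the paper does. The paper's proof is three sentences: take a finite subcover $\{\ao(\bn)\hat B:\bn\in K\}$ of the compact set $L\setminus\bigcup_{z\in F}B_\epsilon(z)$, set $L_\epsilon=\bigcup_{\bn\in K}\ao(\bn)\hat B$, and assert that this union ``admits a partition by rectangles as desired.'' It does not explain how the diameter bound (1) is obtained, nor does it address (6) at all. Your explicit mesh construction via Berend-density of the iterated face hyperplanes is precisely the mechanism behind ``as desired,'' and your radial-collapse argument for (6) is a genuine addition.

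There is one small gap. You choose $K$ purely for \emph{density} of the face hyperplanes, but you also need, for (3) and (5), that every mesh cell disjoint from $\bigcup_{z\in F}B_{\epsilon/2}(z)$ actually lies in some $\ao(\bn)\hat B$ with $\bn\in K$. Your argument ``such a cell cannot cross $\partial\ao(\bn)\hat B\subset\Sigma$, hence lies in some $\ao(\bn)\hat B$'' only applies to cells that already meet the finite union $\bigcup_{\bn\in K}\ao(\bn)\hat B$; a cell could sit in an annulus $B_\epsilon(z)\setminus B_{\epsilon/2}(z)$ without touching any of your finitely many box-iterates. The fix is exactly the compactness step the paper does perform: enlarge $K$ by adjoining finitely many $\bn$ so that $\{\ao(\bn)\hat B:\bn\in K\}$ covers $L\setminus\bigcup_{z\in F}B_{\epsilon/2}(z)$. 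With that, and a routine adjustment of the constants $\epsilon/2,\epsilon$, your argument goes through.
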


\begin{proof}
Consider $L\setminus \bigcup_{z\in F}B_{\epsilon}(z)$ and the covering of this compact set by the iterates of $\hat B$, $\a_0(\bn)(\hat B)$, $\bn\in\mathbb{Z}^{m-1}$. Take a finite subcover, i.e. a finite subset $K\subset \mathbb{Z}^{m-1}$ so that $\a_0(\bn)(\hat B)$, $\bn\in K$ also covers. Now, $L_{\epsilon}=\bigcup_{\bn\in K}\a_0(\bn)(\hat B)$ admits a partition by rectangles $R_i$ as desired. 
\end{proof}

Let $\widehat {Sk}=\bigcup_i \partial C_i$ be the $m-1$-dimensional skeleton defined by the partition from Lemma \ref{partition}. 

\begin{lem}\label{extension}
$h^{-1}$ restricted to $\widehat {Sk}$ is a diffeomorphism onto a $k$-dimensional skeleton $Sk$. Moreover the diffeomorphism $h^{-1}:Sk\to\widehat{Sk}$ extends to a homeomorphism $g_{\epsilon}:L_{\epsilon}\to U_{\epsilon}$ from $L_\epsilon$ onto an open subset $U_{\epsilon}\subset O$,  that is a diffeomorphism if $m-1=2, 4, 5, 11, 60$, i.e. $m=3, 5, 6, 12, 61$, see \cite{jmilnor2}.  
\end{lem}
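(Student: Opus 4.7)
The plan is to build $g_\e$ in two stages: first on the $(m-1)$-dimensional skeleton $\widehat{Sk}$, where I can read it off directly from $h^{-1}$, and then across each top-dimensional cell $C_i$ by a Schönflies-type extension of the boundary diffeomorphism.

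For the first stage, observe that by Lemma \ref{partition} each face of the partition $\{C_i\}$ lies inside a translate $\a_0(\bn)(E^s_{\cC_j,R}+\partial^a_{\cC_j}(\hat B))$, i.e.\ inside an affine piece of a stable leaf of the linear model based at a boundary point of some $\a_0(\bn)(\hat B)$. By Lemma \ref{smoothconjugacy}, $h$ coincides on each such affine subspace with the intrinsic affine parameter of the corresponding stable manifold in $M$, so $h^{-1}$ restricted to a face is a $C^{r-\e}$ diffeomorphism onto a piece of a local stable manifold in $M$. Two adjacent faces meet along a lower-dimensional affine subpiece on which the two restrictions coincide, so the pieces glue into a well-defined map $g_0 := h^{-1}|_{\widehat{Sk}}:\widehat{Sk}\to Sk$ that is a diffeomorphism on each face. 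This establishes the first claim of the lemma.

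For the second stage, fix a cell $C_i$ with $\bar C_i\subset \a_0(\bn_i)(\hat B)$. Its boundary $\partial C_i$ is a topological $(m-1)$-sphere, and by Lemma \ref{disconnect} together with the construction of $h$ on the box $\a(\bn_i)(B)$, $g_0(\partial C_i)$ bounds a closed set $\bar D_i\subset \a(\bn_i)(B)\subset O$ with $\text{int}\, D_i=h^{-1}(\text{int}\, C_i)$. Applied within the local chart given by $B$, the generalized Schönflies theorem of Brown and Mazur \cite{mbrown, bmazur} identifies $\bar D_i$ as a closed topological $m$-ball, and Alexander's trick extends the boundary homeomorphism $g_0|_{\partial C_i}$ to a homeomorphism $g_i:\bar C_i\to\bar D_i$. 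Define $g_\e$ piecewise by $g_\e|_{C_i}=g_i$; the pieces agree on overlaps since overlaps lie in $\widehat{Sk}$, where $g_\e$ is prescribed to be $g_0$. Finally set $U_\e:=\bigcup_i D_i$, which is open in $M$ (the $D_i$ fit together along collared face-neighborhoods) and is contained in $O$ by construction, so $g_\e:L_\e\to U_\e$ is a homeomorphism.

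The hard step is the smoothness upgrade in dimensions $m\in\{3,5,6,12,61\}$. Here each boundary piece $g_0|_{\partial C_i}$ is an honest diffeomorphism between smooth $(m-1)$-spheres and the task is to extend across the closed $m$-ball. The obstruction is the twisted sphere $\Sigma\in\Theta_m$ formed by gluing two copies of $D^m$ along the given boundary diffeomorphism; by Kervaire--Milnor and subsequent computations summarized in \cite{jmilnor2}, $\Theta_m$ is trivial in precisely the listed dimensions, so every such $\Sigma$ is standard and the extension can be performed smoothly. Applying this cellwise, and using smooth collars on the face pieces to match the extensions along the $(m-2)$-skeleton where neighboring cells meet, upgrades $g_\e$ to a diffeomorphism in these dimensions. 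The subtleties to check are collar compatibility across cells and that the smooth Schönflies structure on each $D_i$ agrees with the standard one coming from the affine conjugacy; both follow from the smooth product structure of the box faces on the $M$ side provided by Lemma \ref{smoothconjugacy}.
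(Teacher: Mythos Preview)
Your argument is essentially the same as the paper's: both use Lemma \ref{smoothconjugacy} to obtain the diffeomorphism on the skeleton, then embed each $\partial C_i$ as an $(m-1)$-sphere inside the cube $\a(\bn_i)(B)$, invoke Sch\"onflies together with the Alexander trick for the topological extension, and finally appeal to the smooth Sch\"onflies theorem plus the vanishing of $\Theta_m$ in the listed dimensions for the smooth upgrade. Your write-up is more explicit about the gluing along the $(m-2)$-skeleton and the collar compatibility needed for the smooth case, points the paper leaves implicit.
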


Observe that for other dimensions $m-1$ the possible existence of exotic spheres and hence of nonstandard smooth embeddings of $S^{m-1}$ into $\mathbb R^{m}$, \cite{jmilnor,kervairemilnor} may  preclude the possibility of extending $h^{-1}$ diffeomorphically to some cell of the partition. 

\begin{proof}
That $h^{-1}$ restricted to $\widehat {Sk}$ is a diffeomorphism is a consequence of Lemma \ref{smoothconjugacy}. Hence we have a well defined skeleton $$Sk=h^{-1}(\widehat {Sk})=\bigcup _ih^{-1}(\partial C_i).$$ Since $C_i\subset\a_0(\bn)(\hat B)$ for some $\bn\in K$ we have that $h^{-1}(\partial C_i)\subset\a(\bn)(B)$ for some $\bn\in K$. Hence $h^{-1}:\partial C_i\to \a(\bn)(B)$ is an embedding of the $m-1$-dimensional sphere into the a $m$ dimensional cube $\a(\bn)(B)$. Now, Sch\"onflies Theorem and Alexander trick gives that $h^{-1}$ extends to a homeomorphism. The differentiable statement follows from the smooth Sch\"onflies theorem, valid for $m-1\neq 3$, plus the nonexistence of exotic embeddings for the given dimensions.  
\end{proof}

Lemma \ref{extension} and that $int L_\epsilon$ is diffeomorphic to $L\setminus F$ finishes the proof of Theorem \ref{TMain2}.

\end{document}